\newtheoremstyle{plainNoItalics}{}{}{\normalfont}{}{\bfseries}{.}{ }{}
\theoremstyle{plain}
\newtheorem{thm}{Theorem}[section]
\theoremstyle{plainNoItalics}
\newtheorem{rem}[thm]{Remark}
\numberwithin{equation}{section}
\newcommand{\bu}{\mathbf{u}}
\newcommand{\bv}{\mathbf{v}}
\newcommand{\bfw}{\mathbf{w}}
\newcommand{\mf}{\mathbf{f}}
\newcommand{\T}{\text{T}}
\newcommand{\V}{V_h^p}
\newcommand{\rM}{\gamma_1} 
\newcommand{\rA}{\gamma_0} 
\newcommand{\mcT}{\mathcal{T}}
\newcommand{\mesh}{\mathcal{T}_h}
\newcommand{\Th}{\mathcal{T}_h}
\newcommand{\mcF}{\mathcal{F}}
\newcommand{\Ehi}{\mathcal{E}_{h,i}}
\newcommand{\Khi}{\mathcal{K}_{h,i}}
\newcommand{\Me}{M} 
\newcommand{\mcK}{\mathcal{K}}
\newcommand{\mcE}{\mathcal{E}}
\newcommand{\mcM}{\mathcal{M}}
\newcommand{\mcFM}{\mcF_{h,i}(\Me)} 
\newcommand{\mm}{\mathtt{m}}
\newcommand{\MM}{\mathtt{M}}
\newcommand{\intp}{\pi_h^{\mathcal{M}}}
\newcommand\se[1]{\textcolor{black}{#1}}
\title{A bound preserving cut discontinuous Galerkin method for one dimensional  hyperbolic conservation laws 
}
\author{Pei Fu\footnote{School of Mathematics,  Nanjing University of Aeronautics and Astronautics, 211106 Nanjing, China. E-mail: fu.pei@nuaa.edu.cn.},
Gunilla Kreiss\footnote{Division of Scientific Computing, Department of Information Technology, Uppsala University, SE-75105 Uppsala, Sweden. E-mail: gunilla.kreiss@it.uu.se.},
Sara Zahedi\footnote{Department of Mathematics, KTH Royal Institute of Technology, SE-10044 Stockholm, Sweden. Email: sara.zahedi@math.kth.se.}
}
\date{}
\begin{document}
\maketitle
\begin{abstract}
 In this paper we present a family of high order cut finite element methods with bound preserving properties for hyperbolic conservation laws in one space dimension. The methods are based on the discontinuous Galerkin framework and use a regular background mesh, where interior boundaries are allowed to cut through the mesh arbitrarily. Our methods include ghost penalty stabilization to handle small cut elements and a new reconstruction of the approximation on macro-elements, which are local patches consisting of cut and un-cut neighboring elements that are connected by stabilization. We show that the reconstructed solution retains conservation and order of convergence.  
 Our lowest order scheme results in a piecewise constant solution that satisfies a maximum principle for scalar hyperbolic conservation laws. 
 When the lowest order scheme is  applied to the Euler equations,  the scheme is positivity preserving in the sense that positivity of pressure and density are retained. For the high order schemes,   suitable bound preserving limiters are applied to the reconstructed solution on macro-elements. In the scalar case,  a maximum principle limiter is applied, which ensures that  the limited approximation satisfies the maximum principle. Correspondingly, we use a positivity preserving limiter for the Euler equations, and show that our scheme is positivity preserving.  In the presence of shocks additional limiting is needed to avoid oscillations, hence we apply a standard TVB limiter to the reconstructed solution.  The time step restrictions are of the same order as for the corresponding discontinuous Galerkin methods on the background mesh.  Numerical computations illustrate accuracy, bound preservation, and shock capturing capabilities of the proposed schemes.  
 \end{abstract}


\noindent {\bf Keywords:} Hyperbolic conservation laws, discontinuous Galerkin method,  cut elements,  satisfying maximum principle,  positivity preserving

\section{Introduction} 
\label{sec:introduction}
In this paper we  consider problems in one space dimension, both scalar hyperbolic conservation laws and the important hyperbolic system of  equations known as the Euler equations of gas dynamics.  These are of the form
\begin{equation}\label{eq:model:1d}
\left\{
  \begin{array}{ll}
    \mathbf{u}_t+\partial_x \mathbf{f(u)}=0, &x\in\Omega,t>0, \\
    \mathbf{u}(x,0)=\mathbf{u_0}(x), & x\in\Omega, 
  \end{array}
\right.
\end{equation}
with suitable boundary conditions. Here, $\Omega \subset \mathbb{R}$, $\bu=(u_1,\cdots,u_n)^\T$ is a vector of real valued functions, which represents conservative variables, and $\mathbf{f(u)}=(f_1(\bu),\cdots,f_n(\bu))^\T$ is the vector  valued flux function. We assume that the corresponding Jacobian  matrix has $n$ real eigenvalues and $n$ complete eigenvectors.   
Solutions of such systems may develop shocks or discontinuities even if the initial data is smooth. It therefore becomes necessary to consider weak solutions,  and as usual we seek the physical relevant  weak solution which satisfies an entropy condition. 

An important property of the entropy solution for a scalar equation of the form \eqref{eq:model:1d}  is the maximum principle, which for Cauchy problems means 
\begin{align}
\text{ if } \bu_0(x)\in[\mm,\MM] \text{ then }
\bu(x,t)\in[\mm,\MM].
\end{align}
The solution of the Euler equations usually does not satisfy the maximum principle, but density and pressure should remain positive. With negative density or pressure hyperbolicity is lost and the Euler system becomes ill-posed, which causes severe numerical difficulties. For initial boundary-value problems boundary conditions must satisfy corresponding restraints for the maximum principle, or positivity, to be valid.  It has been noted that high order methods without bound preserving properties often suffer from numerical instability or nonphysical features when applied to problems with low density or high Mach number, see for example \cite{ha2005numerical}.  

A finite element method (FEM) based on discontinuous piecewise polynomial spaces is commonly called a discontinuous Galerkin (DG) method. The DG methodology coupled to explicit Runge-Kutta time discretizations and limiters, has  successfully been applied to nonlinear time dependent hyperbolic conservation laws, see e.g. \cite{ShuDG2,ShuDG3,ShuDG5}. For more details on DG methods, we refer to \cite{hesthaven2007nodal,shu2009discontinuous}. 
Most finite element methods require the mesh to be aligned to boundaries and material interfaces, and to achieve the optimal accuracy the mesh quality needs to be high.

For boundaries and material interfaces with complicated shapes one approach is to start with a  Cartesian background mesh and use agglomeration to create a body fitted mesh of good quality.  An alternative approach is to allow boundaries and interfaces to cut through the  Cartesian background mesh arbitrarily. This is the approach we take. In this paper we develop an unfitted discretization for hyperbolic PDEs based on standard  Cartesian finite elements that preserves important properties.  Small cut elements may  cause problems, including ill-conditioned linear systems and severe time-step restrictions. Various techniques have been introduced to handle such difficulties. One approach often used in unfitted methods based on DG is cell merging,  where new elements, still unfitted but of sufficient size are created by merging small cut elements with their neighbours ~\cite{Johansson2013,kummer2017extended,modisette2010toward,muller2017high,Qin2013,chen2021adaptive,CHEN2023112384}. A common technique in connection with Cut Finite Element Methods (CutFEM) is to add ghost penalty stabilization terms in the weak form ~\cite{burman2010ghost,burman2012fictitious,massjung2012unfitted,schoeder2020high,gurkan2020stabilized}. In CutFEM the physical domain is embedded into a computational domain equipped with a quasi-uniform mesh. Elements that have an intersection with the domain of interest define the active mesh and associated to that is a finite dimensional function space and a weak form, which together define the numerical scheme \cite{massing2014stabilized,hansbo2014cut,frachon2019cut,sticko2016stabilized}. Interface and boundary conditions are  imposed weakly, when the mesh is unfitted.

 Recently,  some methods based on the DG framework with time-step restrictions that are independent of the sizes of smallest elements have been developed for time-dependent hyperbolic conservation laws. 
 In \cite{engwer2020stabilized,may2022dod} methods are presented that include stabilization of small elements, designed to restore proper domains of dependence. In \cite{giuliani2022two} small elements are stabilized by a redistribution step, which was first introduced in \cite{berger2021state}. In \cite{fu2021high} a family of high-order cut discontinuous Galerkin (Cut-DG) methods with ghost penalty stabilization for non-linear scalar hyperbolic problems is presented. These methods are proven to be high order accurate and $L^2$-stable under  time-step restrictions  that are independent of cut sizes. Total variation stability, which is important when considering problems with shocks,  is also established, but only for the piecewise constant scheme. In \cite{fu2022high} a similar methodology is used, but the focus is on conservation at material interfaces with discontinuous fluxes, which cut arbitrarily through elements in one and two space dimensions.  

In this paper we will develop and analyze a family of high order bound-preserving unfitted discontinuous Galerkin (DG) methods  for nonlinear hyperbolic conservation laws.  We will consider  (\ref{eq:model:1d}) with several artificial subdomain interfaces, a problem which is equivalent to (1.1) on the original domain. The computational mesh, which will be introduced later, is not fitted to these interfaces. We have chosen this model problem as a one dimensional model for problems in higher spatial dimension with a physical boundary or a physical interface that  cuts arbitrarily through the computational mesh  and creates increasingly many cut elements as the mesh is refined, see Figure \ref{cutmesh2d}.

The contribution in this paper is a method for applying limiters in connection with unfitted discretizations such as in~\cite{fu2021high} so that  bound preservation is achieved  for piecewise constant schemes as well as for high order schemes.  
We illustrate with numerical experiments (see Figure 4, 6 in Section  \ref{sec:numres}) that it is not sufficient to directly apply limiters on the numerical solution on the unfitted computational mesh.  
We show that the desired properties can be achieved on macro-elements. Macro-elements~\cite{larson2021conservative} consist of cut and un-cut neighbouring elements in the computational mesh so that each macro-element always has a large intersection with the domain of interest. We propose a reconstruction of the approximated solution on macro-elements based on polynomial extension that preserves conservation and order of accuracy. If reconstruction is applied everywhere the proposed scheme corresponds to cell merging with a particular basis on the macro-element. To minimize errors the reconstruction on a macro-element should  be  applied only when  limiting  is needed on an element belonging to the macro-element. This is possible when the ill-conditioning and time step restrictions due to small cut cells are handled by ghost penalty terms.  

 In the piecewise constant case we can show that under a suitable time step restriction the reconstructed solutions in the scalar case and for the Euler system, satisfy the maximum principle and the positivity of density and pressure, respectively.  For the higher order schemes we can show that the average values of the reconstructed solutions on the macro-elements satisfy the corresponding bounds. Thus, the bound preserving limiters developed by Zhang et. al \cite{zhang2010maximum,zhang2010positivity} can be applied to ensure that the discrete approximations satisfy the pointwise bounds. 

The bound preserving limiter is not enough to control oscillations near shocks. There we need additional limiting. In our work, we use a TVD/TVB  slope limiter commonly applied in combination with DG methods, see \cite{ShuDG2,ShuDG3,ShuDG4,ShuDG5}. Other limiters, such as WENO-limiters~\cite{qiuWenolimiter} are also possible.

The paper is organized as follows. In Section 2 we  introduce the model problem,  the mesh, the finite element function spaces, the Cut-DG discretization, the process for   reconstructing solutions on macro-elements, and some notations. In Section 3  the  scheme  is presented for scalar hyperbolic conservation laws and we show that it satisfies the maximum-principle property when a suitable bound preserving limiter is applied during time evolution.  In Section 4, we present the corresponding scheme for the Euler equations, and analyze the positivity property. 
 In Section 5 numerical computations  are presented, which  support the  theoretical results.  Finally,  a conclusion is given in Section 6.

\section{A cut discontinuous Galerkin discretization in space}
In this section we present an unfitted spatial  discretization for hyperbolic conservation laws on the form \eqref{eq:model:1d}.  After presenting the model problem we define the mesh, our finite dimensional function spaces, and the macro-element partition.
\subsection{A model geometry in one space dimension}
To mimic a problem in higher spatial dimensions with a boundary (or an interface) that cuts arbitrarily through a computational mesh, as in Figure \ref{cutmesh2d}, we consider a one dimensional problem with several artificial interfaces, see Figure \ref{fig:cutmiddle1}. In the computations in Section 5 we let the number of cut elements increase with mesh refinement. This one dimensional problem is considerably more challenging than if only cut elements at the two boundaries are present. For completeness, results with unfitted boundaries and with a physical interface  are included in subsections \ref{sec:twoblastwave} and \ref{sec:discontinuous flux}.
\begin{figure}[!htp]
\centering
\includegraphics[width=3.0in]{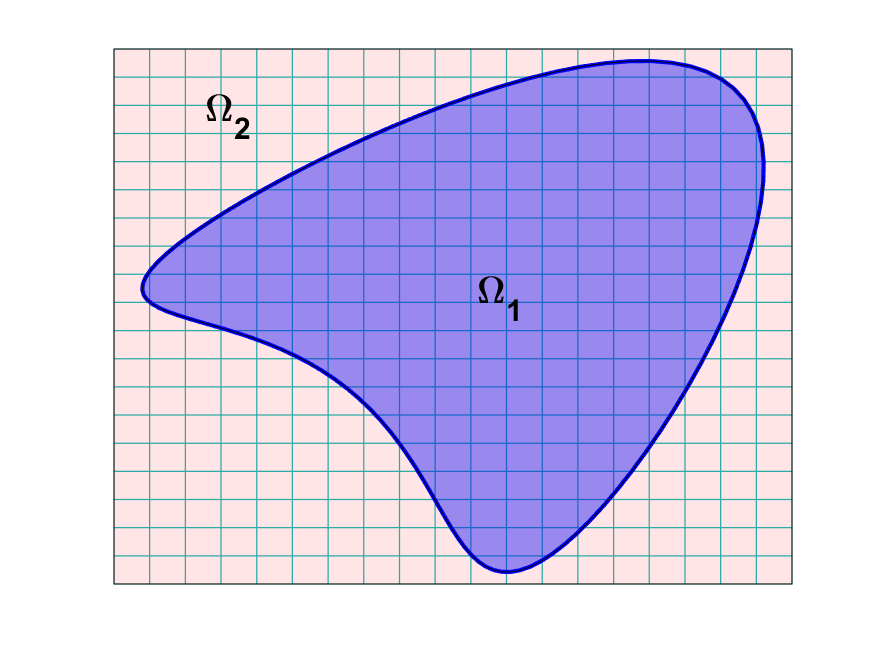}
\caption{An illustration of a regular mesh and an unfitted interface.}\label{cutmesh2d}
\end{figure}

\subsection{Mesh and spaces} 
\label{sec:notapro}
We embed  the physical domain $\Omega=[x_L,x_R]$ into a computational domain $\Omega_h=[x_L, x_R]$, which is discretized into N intervals $I_{j}=[x_{j-\frac{1}{2}},x_{j+\frac{1}{2}}]$, $j=1,\cdots, N$ such that  $x_{L}=x_{\frac{1}{2}}<x_{\frac{3}{2}}<\cdots<x_{N+\frac{1}{2}}=x_{R}$, where $x_{j+\frac{1}{2}}=x_L+jh$ with $h=\frac{x_R-x_L}{N}$. 
\begin{figure}[tbhp]
\centering
\begin{tikzpicture}[xscale=4]
\draw[-][draw=black, very thick] (0,0) -- (.4,0);
\draw[dotted][draw=black, very thick] (0.4,0) -- (0.8,0);
\draw[-][draw=black, very thick] (0.8,0) -- (1.2,0);
\draw[-][draw=red, very thick] (1.2,0) -- (1.35,0);
\draw[-][draw=red, very thick] (1.35,0) -- (1.6,0);
\draw[-][draw=black, very thick] (1.6,0) -- (2.0,0);
\draw[dotted][draw=black, very thick] (2.0,0) -- (2.4,0);
\draw[-][draw=red, very thick] (2.4,0) -- (2.8,0);
\draw[dotted][draw=black, very thick] (2.8,0) -- (3.2,0);
\draw[-][draw=black, very thick] (3.2,0) -- (3.6,0);
\draw [thick] (0,-.1) node[below]{$x_{\frac{1}{2}}$} -- (0,0.2);
\draw[black][thick] (0.0,-.2)-- (0.0,.5);
\node[black][above] at (0.0,.5) {$x_L$}; 
\draw [thick] (.4,-.1) node[below]{$x_{\frac{3}{2}}$} -- (0.4,0.2);
\draw [thick] (0.8,-.1) node[below]{$x_{j-\frac{5}{2}}$} -- (0.8,0.2);
\draw [dotted][draw=violet, ultra thick] (1.2,-.1) node[violet][below]{$x_{j-\frac{3}{2}}$} -- (1.2,0.2);
\draw [blue][thick] (1.3,-0.1) node[above]{$\Gamma_1$} -- (1.3,0.1);
\draw[decorate, decoration={brace, mirror},yshift=-4ex, thick]  (0.002,0) -- node[midway, below=3pt] {$\Omega_1$}  (1.3,0);
\draw[decorate, decoration={brace},yshift=2.3ex]  (0.8,0) -- node[above=0.4ex] {$\Me\in\mcM_{h,1}$}  (1.6,0);
\draw[decorate, decoration={brace, mirror},yshift=-5ex, thick]  (1.3,0) -- node[midway, below=3pt] {$\Omega_2$}  (2.58,0);
\draw[decorate, decoration={brace, mirror},yshift=-4ex, thick]  (2.59,0) -- node[midway, below=3pt] {$\Omega_3$}  (3.6,0);
\draw [thick] (1.6,-.1) node[below]{$x_{j-\frac{1}{2}}$} -- (1.6,0.2);
\draw [thick] (2.0,-.1) node[below]{$x_{j+\frac{3}{2}}$} -- (2.0,0.2);
\draw [thick] (2.4,-.1) node[below]{$x_{k-\frac{1}{2}}$} -- (2.4,0.2);
\draw [blue][thick] (2.58,-0.1) node[above]{$\Gamma_2$} -- (2.58,0.1);
\draw [thick] (2.8,-.1) node[below]{$x_{k+\frac{1}{2}}$} -- (2.8,0.2);
\draw [thick] (3.2,-.1) node[below]{$x_{N-\frac{1}{2}}$} -- (3.2,0.2);
\draw [thick] (3.6,-.1) node[below]{$x_{N+\frac{1}{2}}$} -- (3.6,0.2);
\draw [black][thick] (3.6,-.2) -- (3.6,0.5);
\node [black][above] at (3.6,0.5) {$x_R$};
\end{tikzpicture}
\caption{Discretization of the computational domain $\Omega_h=[x_L,x_R]$ into a uniform partition. This background mesh is cut by two interfaces at $x=\Gamma_1$ and $x=\Gamma_2$, dividing the computational domain into three subdomains. The dotted line at $x_{j-\frac{3}{2}}$ indicates the interior edge in $\mcM_{h,1}$ on which stabilization is applied, i.e. $x_{j-\frac{3}{2}}\in\mcF_{h,1}$.} 
\label{fig:cutmiddle1}
\end{figure}
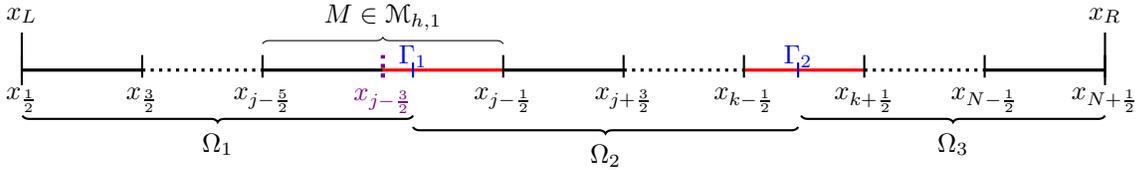
Let
\begin{equation}
  \mesh =\{I_j: I_j \cap\Omega\neq\emptyset: j=1,\cdots, N\}.
\end{equation}
We refer to $\Th$ as the background mesh of the domain $\Omega$. The background mesh may be cut by either the outer domain boundary  $\partial \Omega$ and by interior boundaries (interfaces). In this work we let
\begin{equation}
  \Omega_h=\Omega= \bigcup_{j=1}^N I_j,
\end{equation}
and hence the generated background mesh $\mesh$ aligns with the boundary $\partial \Omega$ but not with interior interfaces. Assume the interior interfaces divide the domain into $N_\Omega$ non-overlapping subdomains $\Omega_i$ such that $\Omega=\cup_{i=1}^{N_\Omega} \Omega_i$. The $N_\Omega-1$ interfaces are positioned at $x=\Gamma_j$, $j=1, \cdots, N_\Omega-1$ and non of them align with the mesh, so we have $N_\Omega-1$ cut elements in the interior of the domain $\Omega$.   For an illustration, see Figure \ref{fig:cutmiddle1}. 

Associated with the subdomains $\Omega_i$ we define active meshes
\begin{equation}
\mcT_{h,i} = \{ I_j \in \mesh : I_j \cap \Omega_i \neq \emptyset\}, \qquad i=1,\cdots, N_\Omega, 
\end{equation}
and active finite element spaces
\begin{equation}
V_{h,i}^p = \{ v \in L^2(\Omega): v|_{I_j} \in \mathbb{P}^p(I_j), \forall I_j \in \mcT_{h,i}  \},
\end{equation}
where $\mathbb{P}^p(I_j)$ denotes the space of discontinuous piecewise polynomials of degree less or equal to $p$ on $I_j$.

Let $\mcE_{h,i}$ be the set of interior edges in the active mesh $\mcT_{h,i}$. The intersections of $I_j$ with $\Omega_i$ are in the set $\mcK_{h,i}$
 \begin{equation}
\mcK_{h,i} = \{ K = I_j \cap \Omega_i : I_j \in \mcT_{h,i} \}. 
\end{equation} 
Finally, let  
\begin{equation}
\V = \bigoplus_{i=1}^{N_\Omega-1}V_{h,i}^p.
\end{equation}

\subsubsection{Macro-elements} \label{sec:macroel}
We create a macro-element partition $\mcM_{h,i}$ of the active domain 
\begin{equation}
  \Omega_{{h,i}}=\bigcup_{I_j \in \mcT_{h,i}} I_j
\end{equation}
following \cite{larson2021conservative}. The idea is to construct elements with large $\Omega_i$-intersections. To do that each interval $I_j$ in the active mesh $\mcT_{h,i}$ is first classified as having a large or a small intersection with the domain $\Omega_i$. We classify $I_j \in \mcT_{h,i}$ as having a large $\Omega_i$-intersection if 
\begin{equation}\label{eq:largeel}
  \delta  \leq \frac{|I_j \cap \Omega_i|}{|I_j|}.
\end{equation}
Here $0 < \delta\leq 1$ is a constant which is independent of the element and of the mesh size $h$.  
We  assume that  $h$ is small enough so that in each subdomain $\Omega_i$ there is at least one element which is classified as having a large intersection with $\Omega_i$.
To each element with a  large intersection a macro-element is associated. The macro-element consists of either only this element or this element and adjacent elements (connected via a bounded number of internal edges) that are classified as having small intersections with $\Omega_i$. In one space dimension a macro-element will at most consist of three elements. In higher dimensions, if the geometry is smooth and well resolved the macro-element partitioning can be made to avoid macro-elements of very different size, please see Algorithm 1 in  \cite{larson2021conservative}.

We define by $\mcFM$ the set consisting of interior edges of macro-element $\Me\in\mcM_{h,i}$. Stabilization will be applied at these edges. Note that the set $\mcFM$ is empty when the macro-element consists of only one element $I_j$ (with a large $\Omega_i$-intersection).  An example is shown  in Figure \ref{fig:cutmiddle1}, where $M\in\mcM_{h,1}$ is a macro-element consisting of more than one element  in $\Omega_{h,1}$ and $x_{j-\frac{3}{2}}\in \mcF_{h,1}$ is the interior edge of that macro-element.  
In the original ghost-penalty approach, stabilization is applied on all interior edges (with respect to $\Omega_i$) that also belong to cut elements. This is referred to as full stabilization. Using the macro-element partitioning stabilization is applied only on interior edges in each macro-element and never between macro-elements, hence conservation properties of the underlying DG-scheme can be inherited to macro-elements. In addition, with a macro-element stabilization errors are less sensitive to stabilization parameters, see \cite{larson2021conservative}.  With smaller choices of $\delta$ less elements are connected via the stabilization, resulting in a more sparse mass-matrix.  However a small value causes a more severe time step restriction, see e.g. Theorem \ref{thm:scalar:1ord}. In the numerical experiments we have used $\delta=0.2$.

\subsection{The weak formulation}
As a first step in finding an approximate solution to \eqref{eq:model:1d}, we follow \cite{fu2021high,fu2022high} and formulate a semi-discrete weak form: For $t\in [0,T]$ find $\bu_h(\cdot,t)=(\bu_{h,1}(\cdot,t), \cdots, \bu_{h,N_\Omega}(\cdot,t)) $ $\in \V$ such that,
\begin{align}\label{scheme:cutDG2}
((\bu_{h})_t,\bv_h)_\Omega+\rM J_1(\bu_{h,t},\bv_h)+a(\bu_h,\bv_h)+\rA J_0(\bu_h,\bv_h)=0,    \quad \forall \bv_h\in\V,
\end{align}
where 
\begin{align}
(\bu_t,\bv)_\Omega &= \sum_{i=1}^{N_\Omega}  \sum_{K\in \Khi} ((\bu_{i})_t,\bv_{i})_{K}, \\
  a(\bu,\bv)=&-\sum_{i=1}^{N_\Omega}\sum_{K\in \Khi}(\mathbf{f}(\bu_i) ,\partial_x(\bv_i))_{K}
-\sum_{i=1}^{N_\Omega}\sum_{e\in \Ehi}\widehat{\mf}(\bu_i)_e[\bv_i]_{e}
-\sum_{i=1}^{N_\Omega-1} \widehat{\mf}(\bu)_{\Gamma_i}[\bv]_{\Gamma_i},\label{def:auv}\\
J_s(\bu, \bv)=&\sum_{i=1}^{N_\Omega} \sum_{\Me \in \mcM_{h,i}} \sum_{e \in \mcFM} \sum_{k=0}^{p}  w_{k}h^{2k+s}\left[\partial^{k} \bu_i \right]_e\left[\partial^{k} \bv_i \right]_{e}.
\label{stable1}
\end{align}
Here
\begin{equation}\label{eq:def:lf}
\widehat{\mf}(\bu)_\xi= \{\mf(\bu)\}_\xi-\frac{{\lambda}}{2}[\bu]_\xi 
\end{equation}
is the Lax-Friedrichs flux, where $\lambda$ is an estimate of the largest absolute eigenvalue of the  Jacobian matrix $\frac{\partial\mf}{ \partial \bu}$ in the  domain $\Omega$,  
 $[\bfw]_\xi=\bfw^{+}-\bfw^{-}$ represents the jump of the function $\bfw$ at $\xi$ and $\{\bfw\}_\xi=\frac{(\bfw^{+}+\bfw^{-})}{2}$ with $\bfw^+=\lim\limits_{\epsilon\to 0^+}\bfw(\xi+\epsilon)$, and $\bfw^-=\lim\limits_{\epsilon\to 0^+}\bfw(\xi-\epsilon) $ denoting the limit values of $\bfw\in\V$ at $\xi$ from right and left. 
  Note that at $e\in \Ehi$ we have  $[\bv]_e=\bv_{i}^{+}-\bv_{i}^{-}$ and at $\Gamma_i$ we have $[\bv]_{\Gamma_i}=\bv_{i+1}|_{\Gamma_i}-\bv_{i}|_{\Gamma_i}$ with $\bv_{i}\in V^p_{h,i}$. In this paper we apply the  global Lax-Friedrichs flux, i.e. $\lambda$ is constant in the entire domain $\Omega$.  
  The local Lax-Friedrichs flux \cite{doi:10.1137/1.9781611975109} and other positivity preserving  fluxes like HLL flux \cite{inbook}, HLLC flux \cite{doi:10.1137/S1064827593260140} can also be applied.
  
 In the stabilization term \eqref{stable1},  $\omega_k=\frac{1}{(k!)^2(2k+1)}$, we refer to \cite{sticko2019higher} for more details about this choice.  The stabilization parameters $\rA$, $\rM$ are positive constants and need to be chosen sufficiently large to avoid severe time step restrictions. In our numerical experiments $\gamma_0=0.25$ and $\gamma_1=0.75$. Assuming time and space scaled so that the wave speed is ${\mathcal O}(1)$ these values work well. However, we have not optimized these values but with the macro-element stabilization our experience is that the results are not sensitive to these precise values.
  
The initial condition is given by the stabilized $L^2$-projection:
\begin{align}\label{eq:stabilize:L2}
\sum_{i=1}^{N_\Omega} \sum_{K\in \Khi} \left(\bu_h(\cdot,0), \bv_{h}\right)_{K}+\rM J_1(\bu_h(\cdot,0), \bv_h)=\sum_{i=1}^{N_\Omega} \sum_{K\in \Khi}\left(\bu_0, \bv_h\right)_{K}, \qquad\forall \bv_h \in \V.
\end{align}

\subsection{A reconstruction on macro-elements}
In general, fully discrete solutions of \eqref{scheme:cutDG2} do not automatically satisfy the maximum principle for scalar equations (see the left panel in Figure \ref{fig:nonsmoothinitial:p1:overshoot}), or positivity of density and pressure for the Euler equations.   To guarantee that the approximate solution to \eqref{eq:model:1d} has all the desired properties we utilize limiters, but we need to apply them properly. We will need a bound preserving limiter and sometimes when shocks or discontinuities are present we will also need additional  limiters. The numerical examples in Section 5 show that it does not suffice to apply a standard limiter directly. Our main idea is to define an approximate solution $\bu_{h,l}$ from $\bu_h$ on the macro-element partition of the subdomains,  and apply the appropriate limiters on $\bu_{h,l}$.

We will in next sections show that applying a limiter to the reconstructed solution on macro elements will ensure the bound preserving properties.  
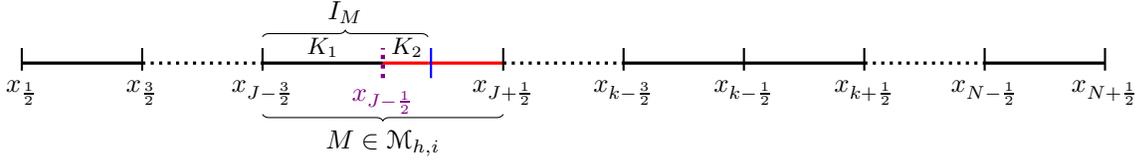
\begin{figure}[!htp]
\centering
\begin{tikzpicture}[xscale=4]
\draw[-][draw=black, very thick] (0,0) -- (.4,0);
\draw[dotted][draw=black, very thick] (0.4,0) -- (0.8,0);
\draw[-][draw=black, very thick] (0.8,0) -- (1.2,0);
\draw[-][draw=red, very thick] (1.2,0) -- (1.35,0);
\draw[-][draw=red, very thick] (1.35,0) -- (1.6,0);
\draw[dotted][draw=black, very thick] (1.6,0) -- (2.0,0);
\draw[-][draw=black, very thick] (2.0,0) -- (2.4,0);
\draw[-][draw=black, very thick] (2.4,0) -- (2.8,0);
\draw[dotted][draw=black, very thick] (2.8,0) -- (3.2,0);
\draw[-][draw=black, very thick] (3.2,0) -- (3.6,0);
\draw [thick] (0,-.1) node[below]{$x_{\frac{1}{2}}$} -- (0,0.2);
\draw [thick] (.4,-.1) node[below]{$x_{\frac{3}{2}}$} -- (0.4,0.2);
\draw [thick] (0.8,-.1) node[below]{$x_{J-\frac{3}{2}}$} -- (0.8,0.2);
\draw [dotted][draw=violet, ultra thick] (1.2,-.2) node[violet][below]{$x_{J-\frac{1}{2}}$} -- (1.2,0.2);
\draw [blue][thick] (1.36,-0.2) -- (1.36,0.2);
\draw[decorate, decoration={brace},yshift=2.3ex]  (0.8,0) -- node[above=0.4ex] {$I_M$}  (1.35,0);
\draw[decorate, decoration={brace},yshift=-4.5ex] (1.6,0)  -- node[below=0.55ex] {$\Me\in\mcM_{h,i}$}  (0.8,0);
\draw [thick] (1.0,-0.05) node[above]{\small{$K_1$}};
\draw [thick] (1.28,-0.05) node[above]{\small{$K_2$}};
\draw [thick] (1.6,-.1) node[below]{$x_{J+\frac{1}{2}}$} -- (1.6,0.2);
\draw [thick] (2.0,-.1) node[below]{$x_{k-\frac{3}{2}}$} -- (2.0,0.2);
\draw [thick] (2.4,-.1) node[below]{$x_{k-\frac{1}{2}}$} -- (2.4,0.2);
\draw [thick] (2.8,-.1) node[below]{$x_{k+\frac{1}{2}}$} -- (2.8,0.2);
\draw [thick] (3.2,-.1) node[below]{$x_{N-\frac{1}{2}}$} -- (3.2,0.2);
\draw [thick] (3.6,-.1) node[below]{$x_{N+\frac{1}{2}}$} -- (3.6,0.2);
\end{tikzpicture}
\caption{Illustration of intervals $K_j\in \mathcal{K}_{h,i}$, which constitute part of the macro-element M. }\label{fig:macro-element}
\end{figure}

For each macro-element $M \in \mcM_{h,i}$, see Figure \ref{fig:macro-element},  let
\begin{align}\label{eq:def:IM}
I_M=M \cap \Omega_i= \cup_{j\in N_M} K_j \text{ with } K_j \in \Khi, 
\end{align}
where $N_M$ is an index set for those elements in $\mcK_{h,i}$ that constitute $I_M$, the part of the macro-element M that is in $\Omega_i$. Furthermore, we have $|I_M|\geq \delta h$ with the definition of $\delta$ in \eqref{eq:largeel}. For the macro-element $M$ illustrated in Figure \ref{fig:macro-element}, $N_M=\{1,2\}$.  We consider the scalar function on the macro-element here and note that we do a reconstruction for each component of $\bu_h$ if $\bu_h$ is a vector-valued function.  
Let $u_h(\cdot,t)=(u_{h,1},\cdots,u_{h,N_\Omega})$ with $u_{h,i}\in V^p_{h,i}$ be an approximation of $u(\cdot,t)$. 
and let $u_i=u|_{\Omega_i}$.  

For each macro-element $M\in\mcM_{h,i}$ and each $K_j$ with $j\in N_M$ define  
\begin{align}\label{eq:def:wh}
 w_{h,j}=u_{h,i}|_{K_j}, \quad w^e_{h,j} = E_M w_{h,j}, 
\end{align}
 where $E_M$ is the polynomial extension operator that extends $w_{h,j}$ to the entire macro-element $M$, such that  $E_M w_{h,j}|_{K_j}=w_{h,j}$. 
 At a point $x$ outside $K_j$, $w^e_{h,j}(x)$ is simply the evaluation of the polynomial at $x$. 

We propose the following reconstruction  on each $M$
\begin{align}\label{eq:def:uhm}
 u_{h,i}^{M}=& \sum_{j\in N_M} \omega_j w_{h,j}^e+c_0,\quad \text{for each } M\in \mathcal {M}_{h,i} 
\end{align}
with
\begin{align}
  \omega_j=\frac{|K_j|}{|I_M|}, \ 
  c_0=\frac{\sum_{j\in N_M} \int_{K_j}w_{h,j}dx}{|I_M|}-\frac{\sum_{j\in N_M}\omega_j \int_{I_M} w_{h,j}^e dx}{|I_M|}.
 \end{align}
Note that each $w_{h,j}$ is extended to the entire macro-element and evaluated in $I_M$ and other choices of weights are possible.  The constant $c_0$ is chosen so that $u_{h,i}^{{M}}$ has the same mean value on $I_M$ as $u_{h,i}$.  Note that $\sum_{j\in N_M} \omega_j=1$, and when $M$ consists of only one element then $|K_j|=|I_M|$ and $u_{h,i}^{{M}}=u_{h,i}|_{M}$.

This reconstruction yields a new approximate solution $u^{\mathcal{M}}_{h,i}$ on the macro-element partition, which is  in the subspace of $V_{h,i}^p$ and   has the following properties
\begin{itemize}
\item conservation:  we have 
\begin{align}\label{eq:def:conservation}
\int_{I_M} u_{h,i}^{\mathcal{M}}dx= \sum_{j\in N_M} \int_{K_j}w_{h,j}dx=\int_{I_M} u_{h,i}dx,
\end{align}
which shows that the reconstruction maintains conservation.
\item accuracy:  let $u_i=u|_{\Omega_i}\in H^{p+1}(\Omega_i) $ define the smooth solution  of \eqref{eq:model:1d} and assume $u_{h,i}$ satisfies
\begin{equation}
  ||u_{h,i}-\pi_h u_i||^2_{\Omega_i}+J_1(u_{h,i}-\pi_h u_i,u_{h,i}-\pi_h u_i)\lesssim Ch^{2p}.
  \label{eq:estphih}
\end{equation}
Here $\pi_h$ denotes the $L^2$ projection onto  $V_{h,i}^p$ and we have $\pi_h u_i=\pi_h E u_i \in V_{h,i}^p$, where $E:H^{p+1}(\Omega_i) \rightarrow H^{p+1}(\mathbb{R})$ is a continuous Sobolev extension operator \cite{stein2016singular}.
The estimate~\eqref{eq:estphih} is the same as in \cite[appendix B]{fu2022high}.
  
To consider the error estimate of the reconstruction, we write
\begin{align}
 ||u_i-u_{h,i}^{\mathcal{M}}||_{\Omega_i}=&\sum_{M\in\mcM_{h,i}} ||\sum_{j\in N_M} \omega_j(u_i-w_{h,j}^e)-c_0||_{I_M}
 = \sum_{M\in\mcM_{h,i}}||\sum_{j\in N_M} \omega_j(u_i-w_{h,j}^e)\notag\\
 & +\sum_{j\in N_M} \frac{1}{|I_M|}\int_{K_j}(u_i-w_{h,j})dx
 +\sum_{j\in N_M}\frac{\omega_j}{|I_M|}\int_{I_M}(w_{h,j}^e-u_i)dx||_{I_M}\notag\\
\leq &2\sum_{M\in\mcM_{h,i}} \sum_{j\in N_M} \omega_j ||u_i-w_{h,j}^e||_{I_M} +\sum_{M\in\mcM_{h,i}} \sum_{j\in N_M}\sqrt{\omega_j}||u_i-w_{h,j}||_{K_j}.
\label{eq:error:uhm}
 \end{align}
We have used $\sum_{j\in N_M}\omega_j=1$, added and subtracted $\frac{1}{|I_M|}\int_{I_M}u_idx=\frac{1}{|I_M|}\sum_{j\in N_M}\int_{K_j}u_idx=\frac{1}{|I_M|}\sum_{j\in N_M}\omega_j\int_{I_M}u_idx$ in the second equality in \eqref{eq:error:uhm}, and finally we appled Minkowski's inequality and H\"older's inequality. 

Define $\intp:  u_i \rightarrow \intp E u_i \in V_{h,i}^p$ to be the $L^2$ projection operator on the macro-element partition such that $\intp E u_i|_{\Me} \in  \mathbb{P}^p(\Me)$ for $\Me \in \mcM_{h,i}$. For macro-element $M$ we assume $K_l\in \Khi$ is the element with a large $\Omega_i$-intersection. Recall the definition of $w^e_{h,l}$ from \eqref{eq:def:wh}.
We rewrite the first term in \eqref{eq:error:uhm} by adding and subtracting $w^e_{h,l}$ and $\intp u_i$ to get 
\begin{align}
\sum_{M\in\mcM_{h,i}}& \sum_{j\in N_M} \omega_j ||u_i-w_{h,j}^e||_{I_M} 
\leq \sum_{M\in\mcM_{h,i}} \sum_{j\in N_M} \omega_j\left(||u_i-w^e_{h,l}||_{I_M}+||w^e_{h,l}-w_{h,j}^e||_{I_M} \right)\notag\\
&\leq \sum_{M\in\mcM_{h,i}} \sum_{j\in N_M} \omega_j\left(||u_i-\intp u_i||_{I_M}+||\intp u_i-w^e_{h,l}||_{I_M}+||w^e_{h,l}-w_{h,j}^e||_{I_M} \right)\notag\\
&\lesssim \sum_{M\in\mcM_{h,i}} \sum_{j\in N_M} \left(||u_i-\intp u_i||_{I_M}+
||\intp u_i-w_{h,l}||_{K_l}+ ||w_{h,l}-w_{h,j}^e||_{K_l}\right). 
\label{eq:error:uhm2}
\end{align}
Here  $a\lesssim b$ is the notation for  $a\leq Cb$ for some generic constant $C$, which is independent of $h$. To arrive at the third inequality in \eqref{eq:error:uhm2}  we used Lemma 2.3 in \cite{chen2021adaptive} to estimate the second and the third term (the constant in the estimate depends on $|I_M|/|{K_l}|$ and $p$). Using Taylor expansion around the edge $x_e={K_l}\cap K_j$, we have 
 \begin{align}
\sum_{M\in\mcM_{h,i}} \sum_{j\in N_M}&|| w_{h,l}(x)-w_{h,j}^e(x)||^2_{K_l}=\sum_{M\in\mcM_{h,i}} \sum_{j\in N_M}||\sum^p_{r=0}[D^r u_{h,i}]_e\cdot(x-x_e)^r/r!||^2_{K_l}\notag\\
\lesssim&  J_1(u_{h,i}, u_{h,i})=J_1(u_{h,i}-E u_i, u_{h,i}-Eu_i)\notag\\
\lesssim& J_1(u_{h,i}-\pi_h u_i,u_{h,i}- \pi_h u_i)+J_1(\pi_h u_i-Eu_{i}, \pi_h u_i-Eu_{i}) .
\label{eq:error:uhm3}
 \end{align}
 Here we utilized that $J_1(E u_i,E u_i)=0$ and added and subtracted $J_1(\pi_h u_i,\pi_h u_i)$. 
 For the first term in \eqref{eq:error:uhm} we combine the estimate \eqref{eq:error:uhm2} and \eqref{eq:error:uhm3}, 
for the second term of \eqref{eq:error:uhm} we use that $w_{h,j}|_{K_j}=u_{h,i}$ to get
\begin{align}
 ||u_i-u_{h,i}^{\mathcal{M}}||_{\Omega_i}^2
 \lesssim& ||u_i-u_{h,i}||_{\Omega_i}^2+ ||u_i-\intp u_i||_{\Omega_i}^2+ ||u_i- \pi_hu_{i}||_{\Omega_i}^2+ J_1(E_iu_{i} -\pi_h u_{i} ,Eu_{i}-\pi_hu_{i})\notag\\
 &+  ||\pi_h u_i-u_{h,i}||_{\Omega_i}^2 +J_1(u_{h,i}-\pi_h u_i,u_{h,i}-\pi_h u_i)
 \leq  Ch^{2p}. 
 \label{eq:error:uhm4}
\end{align}
In the last inequality we used \eqref{eq:estphih} and that optimal approximation properties hold for the projection operators $\pi_h$ and $\intp$, see \cite[(B.13)-(B.15)]{fu2022high}. 
Thus, we have shown that for smooth problems the reconstructed solution $u_{h,i}^{\mathcal{M}}$, defined in \eqref{eq:def:uhm}, has the same order of convergence as $u_{h,i}$.

\begin{rem}
In practice the approximate solution $\bu_{h,l}$  should be constructed only when limiting is necessary. Numerical experiments, where the reconstruction is only done for macro-elements that violate the bound property, show that this works well. See Figure \ref{figure:burgesshock2:onlymacro} for a scalar example and Figure \ref{exe:sedovblast:1d:macroonlybad} for a system. We remark that it is also possible to use the extension technique in the discretization of the PDE instead of adding stabilization terms, see for example \cite{HUANG2017439,doi:10.1137/20M137937X,burman2022cutfem}. 
\end{rem}

\end{itemize}

\section{Maximum principle preserving DG methods for scalar equations}
In this section, we propose a fully discretized scheme for the scalar hyperbolic conservation laws and study the maximum principle property of the proposed scheme.  It suffices to consider the forward Euler method.   
In the numerical experiments, we use high order time discretization methods that  are linear combinations of the forward Euler method.  In each time step we  include a  reconstruction on the macro-element partition of the mesh. We will show that our proposed scheme satisfies the maximum principle. The scheme can be summerized as follows: Given the approximate solution $u^n_h=(u_{h,1}^n, \cdots, u_{h,N_\Omega}^n) $ with $u_{h,i}^n\in V_{h,i}^p$, and satisfying  $u_{h,i}^n|_{\Omega_i}\in [\mm,\MM]$ at time $t_{n}$, we compute $u^{n+1}_h$ by the following steps:
\begin{enumerate}
\item 
Find $\tilde{u}_h^{n+1}=(\tilde u_{h,1}^{n+1}, \tilde{u}_{h,2}^{n+1}, \cdots, \tilde{u}_{h,N_\Omega}^{n+1})\in V_h^p$ such that for $\forall v_h\in\V$
\begin{align}\label{scheme:cutDG2fully}
  \sum_{i=1}^{N_\Omega}\sum_{K\in\Khi}\left(\frac{\tilde u_{h,i}^{n+1}-u_h^n}{\Delta t},v_h\right)_{K}+\rM J_1\left(\frac{\tilde u_{h}^{n+1}-u_h^n}{\Delta t},v_h \right)+a(u_h^n,v_h)
  +\rA J_0(u_h^n,v_h)
  =0, 
\end{align}
with the bilinear forms $a(\cdot,\cdot)$ and $J_s(\cdot,\cdot)$ ($s=0,1$) defined as in \eqref{def:auv} and \eqref{stable1}.  
\item  From $\tilde u^{n+1}_{h,i}$, we compute the reconstructed solution following \eqref{eq:def:uhm}. For each $M \in \mcM_{h,i},i=1,\cdots, N_\Omega$,  we compute $u^{\mathcal{M},n+1}_{h,i}$ from $\tilde u^{n+1}_{h,i}$, which is 
\begin{align}\label{eq:scalar:reconstruction}
  {u}^{\mathcal{M},n+1}_{h,i}|_{M}=  \frac{{\sum_{j\in N_M} }{|K_j|} {\tilde w}^{e,n+1}_{h,j}}{|I_M|}+c_0.
\end{align}
Here,  $\tilde w_{h,j}^{e,n+1}$  is defined as in \eqref{eq:def:wh} but $u_{h,i}$ is replaced by $\tilde u_{h,i}^{n+1}$. If needed, bound preserving limiting, which will be described in section 3.3, is applied to the reconstructed solution before the next step.
 \item Finally, for each macro-element $M \in \mcM_{h,i},i=1,\cdots, N_\Omega$ set
\begin{equation} \label{eq:schemestep3}
  u^{n+1}_{h,i}|_{M}=u^{\mathcal{M},n+1}_{h,i}|_{M}. 
\end{equation}
\end{enumerate}
At the initial time $t_0$,   given $u_0$, replace (3.1) by \eqref{eq:stabilize:L2} and then apply step 2 and 3 to compute $u_{h,i}^{M,0}$ and $u_{h,i}^0$.  
In practice, the approximate solution $\bu_{h,M}$ should be constructed only in those $M\in\mathcal{M}_{h,i}$ when limiting is needed (see Figure \ref{figure:burgesshock2:onlymacro} for Burgers' equation and Figure \ref{exe:sedovblast:1d:macroonlybad} for the Euler equations).  Note that the stabilization terms $J_i$ act only on the interior edges in macro-elements. For macro-elements where the reconstruction is done, the reconstructed solution $u_h^n$ has no jump across the interior edges and thus on such elements the stabilization terms $J_i(u_h^n, v_h)$ vanish.  In all other macro-elements the stabilization $J_0$ is needed.

In the higher order versions of our scheme the reconstructed solution may not  satisfy the maximum principle automatically at the next time level  i.e. $u^{\mathcal{M},n+1}_{h,i}\notin[\mm,\MM], i=1,\cdots,N_\Omega$, and this makes limiting necessary. The details are found in Algorithm 1.

\subsection{Piecewise constant approximation}
Let $u^n_h\in\V, \,p=0$ denote the piecewise constant function that approximates the solution at time $t_n$.   Note that in this case, we have from the conservation property \eqref{eq:def:conservation} that $c_0=0$ and thus
\begin{align}\label{eq:scalar:reconstruction:p0}
  {u}^{M,n}_{h,i}=  \frac{\sum_{j \in N_M} |K_j| {\tilde w}^{n}_{h,j} }{|I_M|},
\end{align}
where ${\tilde w}^{n}_{h,j}$ denotes the mean value on the element $K_j$. 
Note that the solution ${u}^{\mathcal{M},n}_{h,i}$ has no discontinuity at interior edges of macro-elements (for example across the edge between $K_1$ and $K_2$ in Figure \ref{fig:macro-element}), 
and with  $p=0$ there is no difference between function values and mean values.

\begin{thm}\label{thm:scalar:1ord}
  Consider the  Cut-DG scheme described by \eqref{scheme:cutDG2fully}-\eqref{eq:schemestep3} with $p=0$.  If the time step $\Delta t$ satisfies
\begin{align}\label{eq:scalar:p0:time}
\frac{\Delta t}{\delta h}\lambda\leq 1,
\end{align}
then  $u_{h,i}^n|_{\Omega_i} \in [\mm,\MM],i=1,\cdots, N_\Omega$ implies 
 $u^{n+1}_{h,i}|_{\Omega_i}\in[\mm,\MM]$ for $i=1,\cdots, N_\Omega$.
  Here $h$ is the element size in the background mesh, and $\delta$ is a given constant, which in \eqref{eq:largeel} defines when an intersection between an element and $\Omega_i$ is classified as large.
     $\lambda$ is the maximum absolute value of the Jacobian  $\partial f(u_h^n)/\partial u_h^n$ in the domain $\Omega$.
\end{thm}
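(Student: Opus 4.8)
The plan is to reduce the one step map \eqref{scheme:cutDG2fully}--\eqref{eq:schemestep3} with $p=0$ to a standard monotone finite volume scheme for the constant values of the solution on the sets $I_M$, and then invoke the usual bound preservation argument for monotone schemes. First I would test \eqref{scheme:cutDG2fully} with $v_h=\chi_M$, the function equal to one on the elements of the macro-element $M$ and zero on all other elements. Three simplifications occur for $p=0$. The volume contribution $(\mf(u_i),\partial_x v_i)_K$ in $a(\cdot,\cdot)$ vanishes because $v_h$ is piecewise constant. The jump $[\chi_M]_e$ is zero on every interior edge $e\in\mcFM$, so the stabilization term $\rM J_1(\cdot,\chi_M)$ drops out regardless of its first argument; moreover, since the incoming $u_h^n$ is itself the output of the reconstruction \eqref{eq:schemestep3} from the previous step, it is a single constant on each $I_M$ and therefore has no jumps across interior edges of macro-elements, so $\rA J_0(u_h^n,\chi_M)=0$ as well. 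What survives in $a(u_h^n,\chi_M)$ are only the numerical fluxes \eqref{eq:def:lf} at the two outer edges of $I_M$, where $\chi_M$ jumps by $\pm 1$ (these outer edges are either mesh edges shared with a neighbouring macro-element or an interface $\Gamma_i$, and both are treated identically). Writing $\bar u_M^n$ for the constant value of $u_{h,i}^n$ on $I_M$ and using the conservation identity \eqref{eq:def:conservation} together with $c_0=0$ (so that the reconstruction preserves the mean and $u_{h,i}^{n+1}|_M=\bar u_M^{n+1}$ equals the mean of $\tilde u_{h,i}^{n+1}$ on $I_M$), this yields the explicit update
\begin{equation*}
\bar u_M^{n+1}=\bar u_M^n-\frac{\Delta t}{|I_M|}\Big(\widehat f(\bar u_M^n,\bar u_{M+1}^n)-\widehat f(\bar u_{M-1}^n,\bar u_M^n)\Big),
\end{equation*}
where $\widehat f(a,b)=\tfrac12\big(f(a)+f(b)\big)-\tfrac{\lambda}{2}(b-a)$ is the Lax--Friedrichs flux from \eqref{eq:def:lf} and $\bar u_{M\pm1}^n$ are the constant values on the neighbouring macro-elements.

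Next I would recognise this as the classical Lax--Friedrichs finite volume scheme on a grid of cells $I_M$ of size $|I_M|\ge\delta h$, which holds by \eqref{eq:largeel}. Rewriting the update in viscosity form and applying the mean value theorem once to $f(\bar u_{M+1}^n)-f(\bar u_{M-1}^n)=f'(\eta)\,(\bar u_{M+1}^n-\bar u_{M-1}^n)$ gives
\begin{equation*}
\bar u_M^{n+1}=\Big(1-\tfrac{\lambda\Delta t}{|I_M|}\Big)\bar u_M^n+\tfrac{\Delta t}{2|I_M|}\big(\lambda+f'(\eta)\big)\,\bar u_{M-1}^n+\tfrac{\Delta t}{2|I_M|}\big(\lambda-f'(\eta)\big)\,\bar u_{M+1}^n .
\end{equation*}
Because $\lambda$ bounds the maximal $|f'|=|\partial f/\partial u|$ on $\Omega$, the coefficients of $\bar u_{M-1}^n$ and $\bar u_{M+1}^n$ are nonnegative; the coefficient of $\bar u_M^n$ is nonnegative precisely when $\lambda\Delta t/|I_M|\le 1$, and since $|I_M|\ge\delta h$ the stated condition \eqref{eq:scalar:p0:time}, namely $\lambda\Delta t/(\delta h)\le 1$, guarantees this. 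A direct computation shows the three coefficients sum to one, so $\bar u_M^{n+1}$ is a convex combination of $\bar u_{M-1}^n,\bar u_M^n,\bar u_{M+1}^n$.

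Finally, a convex combination of values in $[\mm,\MM]$ lies in $[\mm,\MM]$; since $p=0$ the pointwise values of $u_{h,i}^{n+1}$ on $I_M$ coincide with $\bar u_M^{n+1}$, and as the cells $I_M$ tile $\Omega_i$ this yields $u_{h,i}^{n+1}|_{\Omega_i}\in[\mm,\MM]$ for all $i$, which is the claim. (Equivalently one may argue by monotonicity and consistency: the update map is nondecreasing in each of its three arguments under \eqref{eq:scalar:p0:time} and reproduces constant states, hence maps $[\mm,\MM]^3$ into $[\mm,\MM]$.) I expect the main obstacle to be the reduction step rather than the convex combination estimate: one must argue carefully that both stabilization forms $J_0$ and $J_1$ drop out for the test function $\chi_M$, and crucially that the incoming $u_h^n$ carries no interior macro-element jumps because it is a reconstruction, so that the scheme seen on $I_M$ is exactly a conservative three-point scheme with effective cell size $|I_M|\ge\delta h$ rather than $h$. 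The weakening of the admissible CFL number from $h$ to $\delta h$ is precisely what the factor $\delta$ in \eqref{eq:scalar:p0:time} accounts for, and some care is needed to confirm that cells adjacent to an interface $\Gamma_i$ fit the same stencil since the same flux and jump convention apply there.
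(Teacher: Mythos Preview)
Your proposal is correct and follows essentially the same route as the paper: test \eqref{scheme:cutDG2fully} with the indicator of a macro-element, observe that both stabilization terms vanish (so the scheme collapses to a three-point Lax--Friedrichs update on cells $I_M$ with $|I_M|\ge\delta h$), and then verify bound preservation under the CFL condition. The only cosmetic difference is that the paper packages the update as a function $H(a,b,c;\lambda)$ and checks $\partial_a H,\partial_b H,\partial_c H\ge 0$ together with $H(z,z,z;\lambda)=z$, whereas you apply the mean value theorem once to write $\bar u_M^{n+1}$ directly as a convex combination; your parenthetical remark about monotonicity plus consistency is exactly the paper's argument.
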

\begin{proof}
 By the  reconstruction \eqref{eq:schemestep3}, $u_h^n$ is constant in each macro element $M\in \mcM_{h,i}, i=1,\cdots,N_\Omega$,  and we denote the value by $u_{h,i}^{M,n}$. Let $u_{h,l}^n$ and $u_{h,r}^n$ denote  the values adjacent to  the left and right boundary of  $I_{M}$, respectively, see  $I_M$ in \eqref{eq:def:IM}.   We assume $u_h^n\in [m,M]$. Thus, we have $u_{h,i}^{M,n},u_{h,l}^n,u_{h,r}^n\in[\mm,\MM]$.  Further, let $\widehat{f}_{l}$ and $\widehat{f}_{r}$ denote the numerical fluxes on the left and on the right boundaries of the macro-element $I_M$, respectively. We use the global Lax-Friedrichs  flux, defined by \eqref{eq:def:lf}, which is 
\begin{align}\label{eq:def:lf:left}
\widehat{f}_l&= \frac{1}{2}\left(f(u_{h,l}^n(x_l))+f(u_{h,i}^{M,n}(x_l))\right)
 -\frac{\lambda}{2}\left(u_{h,i}^{M,n}(x_l)-u_{h,l}^n(x_l)\right),\\
 \label{eq:def:lf:right}
 \widehat{f}_r&= \frac{1}{2}\left(f(u_{h,i}^{M,n}(x_r))+f(u^n_{h,r}(x_r))\right)
 -\frac{\lambda}{2}\left(u^n_{h,r}(x_r)-u_{h,i}^{M,n}(x_r)\right).
\end{align}
Let the test function $v_h$ in \eqref{scheme:cutDG2fully} to be $v_h=1$ on each element of $M$ and $v_h=0$ otherwise. By noting that the stabilization terms $J_s$ act only on interior edges in macro-elements and that $u_{h,i}^{n}$  has no jump across such edges, we get   
\begin{align}\nonumber
\sum_{j\in N_M}|K_j|\tilde w^{n+1}_{h,j}=\sum_{j \in N_M}|K_j| u_{h,i}^{n}|_{K_j}-{\Delta t}(\widehat{f}_r-\widehat{f}_l).
\end{align}
Taking (\ref{eq:scalar:reconstruction:p0}) into account at $t=t_{n+1}$, the above equation can be written as
\begin{align}\label{eq:def:scheme:umN}
{u}_{h,i}^{M,n+1}={u}_{h,i}^{M,n}-\frac{\Delta t}{|I_{M}|}\left(\widehat{f}_r-\widehat{f}_{l}\right).
\end{align}
With the definition of  global Lax-Friedrichs flux   \eqref{eq:def:lf:left}-\eqref{eq:def:lf:right} we can rewrite (\ref{eq:def:scheme:umN}) as
\begin{align}
{u}_{h,i}^{M,n+1}&={u}_{h,i}^{M,n}-\frac{\lambda\Delta t}{|I_{M}|}u_{h,i}^{M,n}+\frac{\Delta t}{2|I_{M}|}(\lambda u^n_{h,r}-f(u_{h,r}^n))+\frac{\Delta t}{2|I_{M}|}(\lambda u^n_{h,l}+f(u_{h,l}^n)).
\end{align}
With  $H(a,b,c)$  defined as 
\begin{align}\label{eq:scalar:def:H}
H(a,b,c;\lambda):=\left(1-\frac{\lambda\Delta t}{|I_{M}|}\right)b+\frac{\Delta t}{2|I_{M}|}(\lambda c-f(c))+\frac{\Delta t}{2|I_{M}|}(\lambda a+f(a)),
\end{align}
we have
\begin{align}\label{eq:FirstOrder}
{u}_{h,i}^{M,n+1}=H(u_{h,l}^n,u_{h,i}^{M,n},u_{h,r}^n;\lambda).
\end{align}
Since $|I_M|\geq \delta h$,  it follows by  \eqref {eq:scalar:p0:time} that 
\begin{align}
0&<\frac{\Delta t}{|I_{M}|}\lambda\leq 1,\\
\frac{\partial H(a,b,c;\lambda)}{\partial a}&=\frac{\Delta t}{2|I_{M}|}(\lambda+f'(a))\geq 0,\\
\frac{\partial H(a,b,c;\lambda)}{\partial b}&=1-\frac{\Delta t}{|I_{M}|}\lambda\geq 0,\\
\frac{\partial H(a,b,c;\lambda)}{\partial c}&=\frac{\Delta t}{2|I_{M}|}(\lambda-f'(c))\geq 0.
\end{align}
Further, 
\begin{align}
H(z,z,z;\lambda)=\left(1-\frac{\lambda\Delta t}{|I_{M}|}\right)z+\frac{\Delta t}{2|I_{M}|}(\lambda z-f(z))+\frac{\Delta t}{2|I_{M}|}(\lambda z+f(z))=z.
\end{align}
Thus, it follows that $u_{h,i}^{M,n+1}\in [\mm,\MM]$ and hence $u_{h}^{n+1}$ satisfies the maximum principle under the time step restriction \eqref{eq:scalar:p0:time}. 
\end{proof}

\subsection{Mean values of high order approximations}
We now consider  higher order schemes in space and time, where  the time discretization can be seen as a sequence of Euler steps. We therefore consider \eqref{scheme:cutDG2fully} with $p\ge 1$ together with the reconstruction \eqref{eq:scalar:reconstruction}.  We will show that for sufficiently small time steps the mean values on macro-elements after an Euler step satisfy the  same  maximum principle as in Theorem 3.1 above. We will follow the ideas used to prove the corresponding result for the standard DG method \cite{zhang2010maximum}. 
In the proof we will use the  $q$-point Gauss-Lobatto quadrature rule on each macro-element, which is exact for polynomials of degree $2q-3$.  Let $q$  be the smallest integer satisfying $2 q-3 \geqslant p$.  
For each $M\in \mcM_{h,i}$, we let
$\hat x_{\mu}^M$ denote the $\mu$-th quadrature point, with   $\hat{x}^{M}_{1}=x_{M,l},\hat{x}^{M}_{q}=x_{M,r}$. Here $x_{M,l},x_{M,r}$ are the end points of $I_{M}$,  the part of the macro-element, which is in the subdomain $\Omega_i$.  
 The corresponding weights, normalized to a unit interval, are denoted by $\hat w_\mu, \mu=1,\cdots, q$.  We note that the weights in the chosen quadrature rules are symmetric,  $\hat w_1=\hat w_q$.  
 We also note that after the reconstruction our approximation is a polynomial of degree $p$ on the entire macro-element, which can be integrated exactly by the quadrature rule. The following theorem shows that one Euler step preserves the maximum principle for mean values defined by $\bar{u}^{n}_{h}=(\bar{u}_{h,1}^{n}, \cdots, \bar{u}_{h,N_\Omega}^{n})$ with 
 \begin{align}\label{eq:def:mean:scalar}
 \bar{u}_{h,i}^n|_{M}=\frac{1}{|I_M|}\int_{I_M}u_{h,i}^{M,n}dx=\frac{1}{|I_M|}\int_{I_M}u^n_{h,i}dx, \quad \bar u^{M,n}_{h,i}= \bar{u}_{h,i}^n|_{M}, 
 \end{align}
 for each macro-element $M\in \mathcal{M}_{h,i}$. 
\begin{thm}
Consider the  Cut-DG scheme described by \eqref{scheme:cutDG2fully}-\eqref{eq:schemestep3} with $p\geq 1$.  If the time step $\Delta t$ satisfies
\begin{align}\label{eq:scalar:pp:time}
\frac{\Delta t}{\delta h}\lambda\leq \hat{w}_1,
\end{align}
then  $u_{h,i}^n|_{\Omega_i} \in [\mm,\MM], i=1,\cdots,N_\Omega$ implies that
$\bar{u}_{h,i}^{\mathcal{M},n+1}|_{\Omega_i}\in[\mm,\MM],i=1,\cdots,N_\Omega$, that is, mean values on all macro-elements satisfy the maximum principle.  
Here $h$ is the element size in the background mesh,  and $\delta$ is a given constant, which in \eqref{eq:largeel} defines when an intersection between an element and $\Omega_i$ is classified as large.  
$\lambda$ is the maximum absolute value of the Jacobian $\partial f(u_h^n)/\partial u_h^n$ in the domain $\Omega$. $\hat{w}_1$ is the  normalized weight corresponding to the first interior point for a q-point Gauss-Lobatto rules with $2q-3\geq p$.
\end{thm}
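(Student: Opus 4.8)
The plan is to mimic the argument of Theorem \ref{thm:scalar:1ord}, replacing the piecewise-constant cell values by the nodal values of a Gauss--Lobatto quadrature on each macro-element, following the strategy of \cite{zhang2010maximum}. First I would take the test function $v_h$ equal to $1$ on a fixed macro-element $M\in\mcM_{h,i}$ and $0$ elsewhere in \eqref{scheme:cutDG2fully}. Exactly as in the proof of Theorem \ref{thm:scalar:1ord}, the volume term vanishes because $v_h$ is constant, and both stabilization terms $J_0$ and $J_1$ vanish because $[\partial^k v_h]_e=0$ on every interior edge $e\in\mcFM$. Using the conservation identity \eqref{eq:def:conservation} on the mass term, this gives the mean-value update
\begin{align*}
\bar u_{h,i}^{\mathcal{M},n+1}|_M = \bar u_{h,i}^{M,n} - \frac{\Delta t}{|I_M|}\left(\widehat f_r-\widehat f_l\right),
\end{align*}
formally identical to \eqref{eq:def:scheme:umN}, except that the traces $u_{h,i}^{M,n}(x_{M,l})$ and $u_{h,i}^{M,n}(x_{M,r})$ entering $\widehat f_l,\widehat f_r$ via \eqref{eq:def:lf:left}--\eqref{eq:def:lf:right} are now boundary values of a degree-$p$ polynomial rather than constants.

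Next I would rewrite $\bar u_{h,i}^{M,n}$ using the $q$-point Gauss--Lobatto rule on $I_M$, which is exact here since the integrand has degree $p\le 2q-3$, namely $\bar u_{h,i}^{M,n}=\sum_{\mu=1}^{q}\hat w_\mu\, u_{h,i}^{M,n}(\hat x_\mu^M)$. The decisive feature of this rule is that the endpoint nodes $\hat x_1^M=x_{M,l}$ and $\hat x_q^M=x_{M,r}$ coincide with the traces appearing in $\widehat f_l,\widehat f_r$. Peeling these two endpoint terms off the sum and using the weight symmetry $\hat w_1=\hat w_q$, I would regroup the update as
\begin{align*}
\bar u_{h,i}^{\mathcal{M},n+1}|_M=\sum_{\mu=2}^{q-1}\hat w_\mu\, u_{h,i}^{M,n}(\hat x_\mu^M)+2\hat w_1\, G,\qquad
G=\tfrac12\Big(u_{h,i}^{M,n}(x_{M,l})+u_{h,i}^{M,n}(x_{M,r})\Big)-\frac{\Delta t}{2\hat w_1|I_M|}\left(\widehat f_r-\widehat f_l\right).
\end{align*}
Because $\sum_{\mu=1}^q\hat w_\mu=1$, the coefficients $\hat w_2,\dots,\hat w_{q-1},2\hat w_1$ sum to one, so the right-hand side is a convex combination.

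The crux is to show $G\in[\mm,\MM]$. I would regard $G$ as a function of the four values $u_{h,l}^n$, $u_{h,i}^{M,n}(x_{M,l})$, $u_{h,i}^{M,n}(x_{M,r})$, $u_{h,r}^n$. Substituting the Lax--Friedrichs fluxes and collecting terms, $G$ is consistent, i.e.\ $G=z$ when all four arguments equal $z$, exactly as $H(z,z,z;\lambda)=z$. For monotonicity, the hypothesis \eqref{eq:scalar:pp:time} together with $|I_M|\ge\delta h$ yields the modified bound $\frac{\Delta t\,\lambda}{\hat w_1|I_M|}\le 1$; differentiating $G$ in each argument and using $|f'|\le\lambda$ then shows every partial derivative is nonnegative, the two boundary-trace derivatives being of the form $\tfrac12\big(1-\tfrac{\Delta t}{2\hat w_1|I_M|}(\lambda\mp f')\big)\ge0$ and the two neighbor derivatives of the form $\tfrac{\Delta t}{4\hat w_1|I_M|}(\lambda\pm f')\ge0$. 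Since all four arguments lie in $[\mm,\MM]$ by the hypothesis $u_{h,i}^n|_{\Omega_i}\in[\mm,\MM]$ (valid across the interfaces as well, since every subdomain value is bounded), monotonicity and consistency give $\mm=G(\mm,\dots)\le G\le G(\MM,\dots)=\MM$. The interior nodal values $u_{h,i}^{M,n}(\hat x_\mu^M)$, $\mu=2,\dots,q-1$, lie in $[\mm,\MM]$ for the same reason, so $\bar u_{h,i}^{\mathcal{M},n+1}|_M$ is a convex combination of numbers in $[\mm,\MM]$ and hence lies in $[\mm,\MM]$.

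I expect the main obstacle to be bookkeeping rather than conceptual: getting the convex-combination normalization right (the factor $2\hat w_1$ and the essential use of the symmetry $\hat w_1=\hat w_q$) and checking that the four-argument operator $G$ is monotone under the modified CFL number $\frac{\Delta t\,\lambda}{\hat w_1|I_M|}\le1$. This last point is precisely why the admissible time step in \eqref{eq:scalar:pp:time} is smaller than in \eqref{eq:scalar:p0:time} by the factor $\hat w_1$.
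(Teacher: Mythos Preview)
Your argument is correct and very close to the paper's. The only substantive difference is in how the two endpoint contributions are organized. The paper follows the classical Zhang--Shu trick literally: it inserts an auxiliary Lax--Friedrichs flux $\widehat F$ built from the two endpoint values $\hat u_1^{M,n}$ and $\hat u_q^{M,n}$, and thereby splits the endpoint part into \emph{two} three-point operators $H(\hat u_0,\hat u_1,\hat u_q;\lambda/\hat w_1)$ and $H(\hat u_1,\hat u_q,\hat u_{q+1};\lambda/\hat w_q)$, exactly the $H$ of Theorem~\ref{thm:scalar:1ord}. This lets them invoke the monotonicity of $H$ already established there, with the rescaled speed $\lambda/\hat w_1$ producing the extra factor $\hat w_1$ in the CFL condition. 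You instead keep the two endpoints together in a single four-argument operator $G$ and verify its monotonicity from scratch; the worst-case derivative $\partial G/\partial b$ (or $\partial G/\partial c$) gives the bound $\tfrac{\Delta t\lambda}{\hat w_1|I_M|}\le 1$, identical to the paper's condition. Your route avoids the auxiliary flux $\widehat F$ at the cost of redoing the monotonicity check; the paper's route is slightly more modular because it reduces directly to the first-order theorem.
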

\begin{proof}
As in Section 3.1, we look at  a macro-element $M$ as shown in Figure \ref{fig:macro-element}, with $I_M=K_1\cup K_2$.  
Let $u_{h,l}^n$ and $u_{h,r}^n$ denote  the values adjacent to  the left and right end of  $I_{M}$, respectively, and let $\widehat{f}_r$ and $\widehat{f}_l$ be the corresponding numerical fluxes defined by  \eqref{eq:def:lf}, which is expressed in \eqref{eq:def:lf:left}-\eqref{eq:def:lf:right} on the macro-element. By reconstruction,  $u_{h}^n$ is a single polynomial of degree $p$ in $M$, and can be exactly integrated over $I_{M}$ by the $q-$point  Gauss-Lobatto quadrature rule. 
Let the test function $v_h$ in \eqref{scheme:cutDG2fully} be $1$ on each element of  $M$ and $0$ otherwise, to obtain
\begin{align}\label{eq:scheme:scalar:high}
\sum_{j \in N_M}\int_{K_j}\tilde u^{n+1}_{h,j}dx=\sum_{j \in N_M}\int_{K_j}u_{h,i}^{n}dx-(\widehat{f}_r-\widehat{f}_{l}).
\end{align}
By conservation on macro-elements and the definition  \eqref{eq:scalar:reconstruction}, \eqref{eq:def:mean:scalar}, we have
\begin{align}\label{eq:define:scheme:average}
\overline{u}_{h,i}^{M,n+1}=\overline{u}_{h,i}^{M,n}-\frac{\Delta t}{|I_{M}|}\left(\widehat{f}_r-\widehat{f}_{l}\right).
\end{align}
Applying a $q$-point Gauss-Lobatto quadrature rules gives 
\begin{align}\label{eq:define:average:decompose}
\overline{u}_{h,i}^{M,n}=&\frac{1}{|I_{M}|}\int_{I_{M}}u_{h,i}^{M,n}dx=\sum_{\mu=1}^q \hat{w}_\mu \hat{u}_{\mu}^{M,n}.
\end{align} 
Here $ \hat{u}^{M,n}_{\mu}=u^{M,n}_{h,i}(\hat x_\mu^M)$ denotes the value of the local polynomial at the quadrature point $\hat x_\mu^M$.  
We now follow the idea of the proof of the bound preserving property of the standard DG method for hyperbolic conservation laws, see Theorem 2.2 in  \cite{zhang2010maximum}. But we consider  the mean values $\bar u_{h,i}^{M,n+1}$ of the Cut-DG method on each macro-element. Introduce $\hat F$ as the Lax-Friedrichs flux based on $\hat u^n_{M,1}$ and $\hat u^n_{M,q}$, that is
 \begin{align}
\widehat F=\frac{1}{2}(f(\hat u^{M,n}_{1})+f(\hat u^{M,n}_{q}))-\frac{\lambda}{2}(\hat u^{M,n}_{q}-\hat u^{M,n}_{1}), 
\end{align}
Using the quadrature \eqref{eq:define:average:decompose}, $\hat \omega_1=\hat \omega_q$  and the definition of  $H(a,b,c;\lambda)$  in  \eqref{eq:scalar:def:H},  the equation   \eqref{eq:define:scheme:average} can be rewritten as 
\begin{align*}
\overline{u}_{h,i}^{M,n+1}
=&\sum_{\mu=1}^q \hat{w}_\mu \hat{u}^{M,n}_{\mu}-\frac{\Delta t}{|I_{M}|}\left(\widehat{f}_r
-\widehat{f}_l\right)=\\
=&\sum_{\mu=2}^{q-1} \hat{w}_\mu \hat{u}^{M,n}_{\mu}
+\hat{w}_1\left(\hat{u}^{M,n}_{1}-\frac{\Delta t}{\hat{w}_1|I_{M}|}\left(\hat{F}
-\hat{f}_l)\right)\right)\notag+\hat{w}_q\left(\hat{u}^{M,n}_{q}-\frac{\Delta t}{\hat{w}^n_q|I_{M}|} (\hat{f}_r
-\hat{F})\right)=\\
=&\sum_{\mu=2}^{q-1} \hat{w}_\mu \hat{u}^{M,n}_{\mu}+\hat{w}_1H(\hat{u}^{M,n}_{0},\hat{u}^{M,n}_{1},\hat{u}^{M,n}_{q};\lambda/\hat{w}_1)
+\hat{w}_qH(\hat{u}^{M,n}_{1},\hat{u}^{M,n}_{q},\hat{u}^{M,n}_{q+1};\lambda/\hat{w}_q).
\end{align*}
Here  $\hat{u}^{M,n}_{0}=u_{h,l}^{n}(x_{M,l})$ and $\hat{u}^{M,n}_{q+1}=u_{h,r}^{n}(x_{M,r})$ are used.  
We can see that the above expression is a  linear combination of  two first order schemes \eqref{eq:FirstOrder} and $\sum_{\mu=2}^{q-1}\hat{w}_\mu  \hat{u}^{M,n}_{\mu}$. As in Theorem \ref{thm:scalar:1ord}, we can conclude that $\overline{u}_{h,i}^{M,n+1}\in[\mm,\MM] $ under the condition $\frac{\Delta t}{\delta h }\lambda\leq \hat{w}_1$.  
\end{proof}
We note that the time restrictions in \eqref{eq:scalar:p0:time} and \eqref{eq:scalar:pp:time} are of  the same order as for the corresponding standard DG methods on the background grid. When we set $\delta=1$, the time-step restrictions are same as in the standard un-cut case, but all cut elements are stabilized.

\subsection{A bound preserving limiter for scalar problems}
In our proposed scheme described in \eqref{scheme:cutDG2fully}-\eqref{eq:schemestep3} and in Theorems 3.1-3.2, we require $u_{h,i}^{n}|_{\Omega_i}\in[\mm,\MM],i=1,\cdots,\Omega_N$, but the high order reconstructed solution at next time level may not satisfy this requirement. In order to guarantee that the solution remains within the bounds at later times, we use a bound preserving limiter developed by Zhang and Shu \cite{zhang2010maximum} for high order approximations.  In the standard DG method,  the solution $u_h(x)$ is modified locally in each element $I$ such that mean values are un-changed, and $u_{h}(x)\in[\mm,\MM]$ for $x\in S_I$, where $S_I$ is set of the  Gauss-Lobatto quadrature points on $I$. 
 In the unfitted case,  a cut element $I$ may be such that $I\cap\Omega_i$ is very small but $I_M=M\cap\Omega_i$ is always large. Therefore we propose to apply the limiter to the reconstructed solution in each  macro-element $M$. 
 For each macro-element $M$ we define the limited solution $u_{h,i}^{M,(l)}(x)$ by
\begin{equation}\label{eq:limit:uj}
u_{h,i}^{M,(l)}(x)=\theta\left(u_{h,i}^{M}(x)-\bar u_{h,i}^{M}\right)+\bar{u}_{h,i}^{M}, \quad \theta=\min \left\{\left|\frac{\MM-\bar{u}_{h,i}^{M}}{\MM_{M}-\bar{u}_{h,i}^{M}}\right|,\left|\frac{\mm-\bar{u}_{h,i}^{M}}{\mm_{M}-\bar{u}_{h,i}^{M}}\right|, 1\right\}.
\end{equation} 
Here we assume $u_{h,i}^M$ is a reconstructed solution on $M$ and 
\begin{align}\label{eq:limit:mj}
\MM_{M}=\max _{x \in {I_M}} u_{h,i}^{M}(x), \quad \mm_{M}=\min _{x \in I_M } u_{h,i}^{M}(x).
\end{align}
Then, we let $u_{h,i}^{\mathcal{M}}|_{M}=u_{h,i}^{M,(l)}(x)$,  which  satisfies the maximum principle on $\Omega_i$.  As for the standard DG method,  we can use $ u^{M,(l)}_{h,i}(x)$ instead of $ {u}_{h,i}^{M}(x)$ without destroying the accuracy.  
We note that  the exact minimum value and maximum value of the polynomials  is needed on the macro-element. 

By combining the sequence of steps \eqref{scheme:cutDG2fully}-\eqref{eq:schemestep3} with \eqref{eq:limit:uj}, we can get a fully discrete, bound preserving scheme for scalar problems for any polynomial order. For clarity we have formulated the version based on a third order Runge-Kutta discretization \eqref{eq:TVDRK3} in time in Algorithm 1. 
\begin{algorithm}[!h]
\caption{Fully discrete bound preserving Cut-DG method for scalar problem}\label{algorithm}
\KwData{Given initial condition $u_0(x)$ at $t=0$}
\KwResult{Bound preserving approximation at final time $T$}
Initialize $\tilde u_h^0$ by \eqref{eq:stabilize:L2} and $n=0$\;
$u_h^0$=Reconstruction and limiting$(\tilde {u}_h^{0})$\;
\While{$t\leq T$}{
Compute suitable time-step $\Delta t$ based on Theorem 3.1/3.2\;
\tcc {Runge-Kutta time discretization}
$\tilde {u}_h^{(1)}$ = Euler-Step (${u}_h^{n}$, $t_n$, $\Delta t$)\tcp*{first stage}
${u}_h^{(1)}$ = Reconstruction and limiting $(\tilde {u}_h^{(1)})$\;
 ${u}_h^{(2')}$ = Euler-Step(${u}_h^{(1)}$, $t_n+\Delta t$, $\Delta t$)\tcp* {second stage}
Set: $\tilde {u}_h^{(2)}=\frac{3}{4}{u}_h^{n}+\frac{1}{4}{u}_h^{(2')}$\;
${u}_h^{(2)}$ = Reconstruction and limiting $(\tilde {u}_h^{(2)})$\;
 $\tilde {u}_h^{(3')}$  =  Euler-Step(${u}_h^{(2)}$, $t_n+\Delta t/2$, $\Delta t$)\tcp* {third stage}
Set: $\tilde {u}_h^{(3)}=\frac{1}{3}{u}_h^{n}+\frac{2}{3}{u}_h^{(3')}$\;
${u}_h^{(3)}$ = Reconstruction and limiting $(\tilde  {u}_h^{(3)})$\;
Set: ${u}_h^{n+1}= {u}_h^{(3)}$\;
$n\leftarrow n+1$\;
$t\leftarrow t+\Delta t$\;
}
\SetKwFunction{proc}{Euler-step}
  \setcounter{AlgoLine}{0}
  \SetKwProg{eulerstep}{Function}{}{}
  \eulerstep{\proc{${u}_h$, $t$, $\Delta t$}}{
Using \eqref{scheme:cutDG2fully} with ${u}_h,t,\Delta t$  to get $\tilde{u}_h=(\tilde u_{h,1}, \tilde{u}_{h,2}, \cdots, \tilde{u}_{h,N_\Omega})\in V_h^p$ \;
 \KwRet $\tilde {u}_h=(\tilde u_{h,1}, \tilde{u}_{h,2}, \cdots, \tilde{u}_{h,N_\Omega})$\;
  }
  \SetKwFunction{procc}{Reconstruction and limiting}
  \setcounter{AlgoLine}{0}
  \SetKwProg{reconlimit}{Function}{}{}
  \reconlimit{\procc{$\tilde {u}_h$}}{
\For{$i=1$ \KwTo $N_\Omega$}{
Using \eqref{eq:scalar:reconstruction} with $\tilde {u}_h$  to reconstruct $u_{h,i}^{\mathcal{M}}, i=1,\cdots,N_\Omega$\;
\lIf{$u_{h,i}^{\mathcal{M}}|_{\Omega_i}\notin[\mm,\MM]$}{
modify $u_{h,i}^{\mathcal{M}}$ by  \eqref{eq:limit:uj}}
$u_{h,i}\leftarrow u_{h,i}^{\mathcal{M}}$\;
}
 \KwRet ${u}_h=(u_{h,1}, {u}_{h,2}, \cdots, {u}_{h,N_\Omega})\in V_h^p$\;
  }
\end{algorithm}

\newcommand{\bw}{\mathbf{u}}
\newcommand{\gm}{\boldsymbol{m}}
\newcommand{\n}{\boldsymbol{n}}
\newcommand{\I}{\mathbf{I}}
\newcommand{\x}{\boldsymbol{x}}
\newcommand{\bnu}{\boldsymbol{\nu}}
\newcommand{\bs}{\boldsymbol{s}}
\newcommand{\bg}{\boldsymbol{G}}
\newcommand{\bfu}{\boldsymbol{F}}
\newcommand{\hatg}{\widehat{\boldsymbol{G}}}
\newcommand{\dw}{(\omega_{j+\frac{1}{2}}-\omega_{j-\frac{1}{2}})}

\newcommand {\Q}{\mathbb{Q}}
\newcommand {\R}{\mathbb{R}}
\newcommand {\N}{\mathbb{N}}
\newcommand {\Z}{\mathbb{Z}}
\section{A Positivity preserving Cut-DG method for the Euler equations}
\label{sec:euler:1d}
In this section, we will show how the positivity property  for the Euler equations  can be built into our proposed Cut-DG method. The Euler equations in one space dimension are 
\begin{align}
\label{euler:eq}
& \bw_t+\partial_x\mathbf{f}{( \bw)}=0,\quad t\geq0,\; x \in \Omega\subset \mathbb{R},\\
\label{euler:eq:flux}
& \bw=\begin{pmatrix}
      \rho    \\
      m\\
      E
\end{pmatrix}
,\quad \mathbf{f}{(\bw)}=\begin{pmatrix}
      m   \\
      \rho u^2+p\\
      (E+p)u
\end{pmatrix}.
\end{align}
Here  $\rho$ is the density, $m$  the momentum,  and $E$  the total energy.  
These quantities are often called the conservative variables, and are related to other physical quantities such as the velocity $u$, the pressure $p$, and the internal energy $e$ through $E=\frac{1}{2}\rho u^2+\rho e,\, m =\rho u,$ and $p=(\gamma-1)\rho e$. Here $\gamma$ is the specific gas constant. Note that the pressure $p$  can be written as a function of the conservative variables,
\begin{equation}\label{eq:def:p}
p(\bw)=(\gamma-1)\left(E-\frac{1}{2}\frac{m^2}{\rho}\right).
\end{equation}
 Also important is the sound speed, given by $c=\sqrt{\gamma p/\rho}$.
 
 In physically relevant solutions, the density and pressure should be positive. Without positivity hyperbolicity is lost, and the mathematical problem is ill-posed. If  the solution is initially positive, it can be proven that  the solution of the compressible Euler equations cannot loose positivity of density and pressure, see  \cite{Glimm1965,dafermos2005hyperbolic,chen2017}. Therefore 
 we define the following set 
\begin{align}\label{euler:set}
G=\left\{\bw=(\rho,m,E)^\T
\middle|
\;\rho > 0,\; p(\bu)=(\gamma-1)\left(E-\frac{1}{2}\frac{m^2}{\rho}\right) >0
\right\},
\end{align}
as the set of admissible solutions for the compressible Euler equations \eqref{euler:eq}.  Since $-p(\bu)$ is convex, we have using  Jensen's inequality for any $0\leq s\leq 1$, $\bw_1=(\rho_1,m_1,E_1)^\T\in G$, and $\bw_2=(\rho_2,m_2,E_2)^\T\in G$  we have
\begin{align}
p(s\bw_1+(1-s)\bw_2)\geq sp(\bw_1)+(1-s)p(\bw_2)>0, \quad \text{if }\; \rho_1\geq 0,\;\rho_2\geq0.
\end{align}
Thus $G$ is a convex set.

Numerical methods that do not  preserve the positivity property,  may cause numerical instability or nonphysical features in approximated solutions of problems with low density, or high Mach number \cite{ha2005numerical}. Preserving positivity of pressure and density is therefore an important property of a numerical method for the Euler equations, and it can be seen as a generalization of the maximum principle for scalar conservation laws. As for the scalar problems,  we will be working with  mean values on macro-element $M\in\mathcal{M}_{h,i},i=1,\cdots,N_\Omega$.  For each component of $\bu_h$, we define the mean values on the macro-element $M$ as in \eqref{eq:def:mean:scalar}, and for the pressure we define $\overline p_{h,i}^{M}:=p(\overline{\bw}_{h,i}^{M})=(\gamma-1)\left(\overline{E}_{h,i}^{M}-\frac{1}{2}\left(\bar{m}_{h},i^{M}\right)^2 / \bar{\rho}_{h,i}^{M}\right)$. By $\overline{\bw}_{h,i}^M\in G$ we mean $\overline\rho_{h,i}^M>0$ and $\overline p_{h,i}^M>0$.

\subsection{Piecewise constant approximations}
In this subsection, we will study the piecewise constant scheme, \eqref{scheme:cutDG2fully}-\eqref{eq:schemestep3}, for the compressible Euler equations \eqref{euler:eq}. 
We are particularly interested in the positivity property,  which was discussed above.  For the piecewise constant method we prove the following positivity result.  
\begin{thm}\label{thm:Euler:1ord}
  Consider the piecewise constant Cut-DG scheme described by \eqref{scheme:cutDG2fully}-\eqref{eq:schemestep3} applied to the Euler equations \eqref{euler:eq}. The reconstruction \eqref{eq:scalar:reconstruction} is done component wise. 
  If the time step $\Delta t$ satisfies 
\begin{align}\label{eq:euler:p0:time}
\frac{\Delta t}{\delta h}\lambda\leq 1,
\end{align}
then  $\bw_{h,i}^n|_{\Omega_i} \in G, i=1,\cdots,N_\Omega$ implies 
 $\bw^{n+1}_{h,i}|_{\Omega_i}\in G, i=1,\cdots,N_\Omega$. 
  Here $h$ is the element size in the background mesh,  and $\delta$ is a given constant, which in \eqref{eq:largeel} defines when an intersection between an element and $\Omega_i$ is classified as large. 
  $\lambda=\max_{x\in\Omega}|u|+c$ is the largest absolute eigenvalue of Jacobian matrix $\frac{\partial \mathbf{f}(\bu_h^n)}{\partial \bu_h^n}$ in the entire domain $\Omega$.
\end{thm}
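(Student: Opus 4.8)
The plan is to imitate the proof of Theorem \ref{thm:scalar:1ord}, but to replace the scalar monotonicity argument by a convexity argument based on the admissible set $G$ in \eqref{euler:set}. First I would fix a macro-element $M\in\mcM_{h,i}$. After the componentwise reconstruction \eqref{eq:scalar:reconstruction}, the piecewise constant approximation $\bw_h^n$ takes a single value $\bw_{h,i}^{M,n}$ on all of $I_M$ and in particular has no jump across the interior edges of $M$, so the stabilization terms vanish on $M$. Choosing $v_h=1$ on each element of $M$ and $v_h=0$ otherwise in \eqref{scheme:cutDG2fully}, and proceeding exactly as in \eqref{eq:def:scheme:umN}, I obtain the componentwise update
\begin{align*}
\bw_{h,i}^{M,n+1}=\bw_{h,i}^{M,n}-\frac{\Delta t}{|I_M|}\left(\widehat{\mf}_r-\widehat{\mf}_l\right),
\end{align*}
where $\widehat{\mf}_l,\widehat{\mf}_r$ are the global Lax-Friedrichs fluxes \eqref{eq:def:lf:left}--\eqref{eq:def:lf:right} formed from the neighbouring states $\bw_{h,l}^n,\bw_{h,r}^n$ and the macro-element value $\bw_{h,i}^{M,n}$.

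Next I would substitute the Lax-Friedrichs fluxes and regroup the update into a single convex combination. A direct rearrangement (cancelling the two occurrences of $\mf(\bw_{h,i}^{M,n})$) gives
\begin{align*}
\bw_{h,i}^{M,n+1}
=\left(1-\frac{\lambda\Delta t}{|I_M|}\right)\bw_{h,i}^{M,n}
+\frac{\lambda\Delta t}{2|I_M|}\left(\bw_{h,r}^n-\tfrac{1}{\lambda}\mf(\bw_{h,r}^n)\right)
+\frac{\lambda\Delta t}{2|I_M|}\left(\bw_{h,l}^n+\tfrac{1}{\lambda}\mf(\bw_{h,l}^n)\right).
\end{align*}
Since $|I_M|\geq\delta h$, the CFL restriction \eqref{eq:euler:p0:time} gives $0\leq\lambda\Delta t/|I_M|\leq 1$, so the three coefficients are nonnegative and sum to one, and the update is a genuine convex combination of $\bw_{h,i}^{M,n}$ and of the two Lax-Friedrichs split states $\bw_{h,r}^n-\lambda^{-1}\mf(\bw_{h,r}^n)$ and $\bw_{h,l}^n+\lambda^{-1}\mf(\bw_{h,l}^n)$.

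The key step, which I expect to be the main obstacle, is the lemma that these split states remain in $G$: if $\bw=(\rho,m,E)^\T\in G$ and $\lambda\geq|u|+c$, then $\bw\pm\lambda^{-1}\mf(\bw)\in G$. Positivity of the density is immediate, since the first component of $\bw\pm\lambda^{-1}\mf(\bw)$ equals $\rho(1\pm u/\lambda)$, which is positive because $\lambda\geq|u|+c>|u|$. Positivity of the pressure is the technical heart: one evaluates $p(\bw\pm\lambda^{-1}\mf(\bw))$ from \eqref{eq:def:p} and shows it is positive using $\lambda\geq|u|+c$ together with $p(\bw)>0$. This is the one-dimensional instance of the positivity estimate for the Lax-Friedrichs flux due to Zhang and Shu \cite{zhang2010positivity}, and it is the single place where the precise choice $\lambda=\max_{x\in\Omega}|u|+c$ is used.

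Granting the split-state lemma, the hypothesis $\bw_{h,i}^n|_{\Omega_i}\in G$ yields $\bw_{h,i}^{M,n},\bw_{h,l}^n,\bw_{h,r}^n\in G$, so all three states entering the convex combination lie in $G$. Convexity of $G$, which was established above via Jensen's inequality, then forces $\bw_{h,i}^{M,n+1}\in G$. By the reconstruction step \eqref{eq:schemestep3} this is exactly $\bw_{h,i}^{n+1}|_{\Omega_i}\in G$, completing the argument. The routine parts are the algebraic regrouping and the density estimate; essentially all the difficulty is concentrated in the pressure positivity of the split states.
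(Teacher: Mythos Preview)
Your proposal is correct and follows essentially the same approach as the paper: both reduce the macro-element update to a convex combination of $\bw_{h,i}^{M,n}$ and the Lax-Friedrichs split states $\bw\pm\lambda^{-1}\mf(\bw)$, invoke the known result that these split states lie in $G$ when $\lambda\geq|u|+c$, and conclude by convexity of $G$. The only cosmetic difference is that the paper first writes out the density component of the full update separately before introducing the split states for the pressure argument, whereas you go directly to the convex combination and treat density and pressure of the split states together.
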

\begin{proof}
As before we consider each macro-element $M\in \mcM_{h,i}, i=1,\cdots, N_\Omega$ separately. Let $\bu_{h,l}^n=(\rho_{h,l}^n,m_{h,l}^n,E_{h,l}^n)$, $\bu_{h,r}^n=(\rho_{h,r}^n,m_{h,r}^n,E_{h,r}^n)$, $\bu_{h,i}^{M,n}$ denote the solution on the left side and right side of $I_\Me$, and in $I_M$, respectively. By assumption $\bw_{h,l}^n,\bw_{h,i}^{M,n},\bw_{h,r}^n\in G$,  and   $\bw_{h,i}^{M,n}$ is constant in $I_M$. Following the same analysis as for the scalar conservation law, we get
\begin{align}\label{scheme:Euler}
{\bu}_{h,i}^{M,n+1}&=\boldsymbol{H}(\bw_{h,l}^n,\bu_{h,i}^{M,n},\bu_{h,r}^n;\lambda),\\
\boldsymbol{H}(\bu_{h,l}^n,\bu_{h,i}^{M,n},\bu_{h,r}^{n};\lambda)&:=
{\bu}_{h,i}^{M,n}-\frac{\Delta t}{|I_M|}\left( \hat{\mf}_r-\hat{\mf}_l \right).
\label{def:euler:H}
\end{align}
Here $\hat{\mf}_r$ and $\hat{\mf}_l$  are  the numerical fluxes on the right and  the left edges of $I_M$, respectively. We use the global Lax-Friedrichs flux \eqref{eq:def:lf} with $\lambda=\max_{x_\in \Omega}\{|u|+c\}$ , which is the largest absolute value of eigenvalues of the Jacobian matrix $\frac{\partial \mathbf {f(\bu_h^n)}}{\partial \bu_h^n}$.  
Next, we study the positivity property for the density and pressure of \eqref{scheme:Euler}. From the definition of  the global Lax-Friedrichs   flux \eqref{eq:def:lf} applied to the particular Euler flux  \eqref{euler:eq:flux}  we have
\begin{align}
{\rho}_{h,i}^{M,n+1}&={\rho}_{h,i}^{M,n}-\frac{\Delta t}{2|I_M|}(m_{h,i}^{M,n}+m^n_{h,r}-\lambda(\rho^n_{h,r}-\rho_{h,i}^{M,n}))+\frac{\Delta t}{2|I_M|}(m_{h,i}^{M,n}+m^n_{h,l}-\lambda(\rho_{h,i}^{M,n}-\rho^n_{h,l}))\notag\\
&=\left(1-\frac{\Delta t}{|I_M|}\lambda\right)\rho_{h,i}^{M,n}+\frac{\Delta t}{2|I_M|}\rho^n_{h,r}(\lambda-u^n_{h,r})+\frac{\Delta t}{2|I_M|}\rho^n_{h,l}(\lambda+u^n_{h,l}).\label{eq:euler:rho}
\end{align}
From $\lambda=\max_{x\in\Omega}|u|+c$ it follows that $\lambda\pm u\geq 0$, and since $|I_M|\geq \delta h$ and 
\begin{align}
\rho_{h,i}^{M,n}>0, \, \rho^n_{h,l}>0, \, \rho^n_{h,r}>0, \quad 1-\frac{\lambda\Delta t}{|I_M|}>0,
\end{align}
we   conclude from equation \eqref{eq:euler:rho}  that ${\rho}_{h,i}^{M,n+1}>0$ under time step restriction \eqref{eq:euler:p0:time}. 
To show that $\bw_{h,i}^{M,n+1}\in G$ it remains to  consider the pressure that corresponds to $\bw_{h,i}^{M,n+1}$.  Following the analysis in \cite{FU2022111600}, we define $V_a(\bw_{h,l}^n)={\mf}(\bw_{h,l}^n)/{\lambda}+\bw_{h,l}^n$ and $V_b(\bw_{h,r}^n)=\bw_{h,r}^n-{\mf}(\bw_{h,r}^n)/{\lambda}$, and rewrite \eqref{def:euler:H} as
\begin{align}
\boldsymbol{H}(\bw_{h,l}^n,\bw_{h,i}^{M,n},\bw_{h,r}^n;\lambda)=(1-\frac{\lambda\Delta t}{|I_M|}){\bw}_{h,i}^{M,n}+\frac{\lambda\Delta t}{2|I_M|}V_a(\bw_{h,l}^n)+\frac{\lambda\Delta t}{2|I_M|}V_b(\bw_{h,r}^n).
\end{align}
As in \cite{FU2022111600} (see Lemma 3.1),  we have  $p(V_a)>0$ and $p(V_b)>0$  if  $\bw_{h,l}^n,\bw_{h,r}^n\in G$. By the convexity of $G$ we obtain the desired result. We refer to the  corresponding analysis for the   DG method on a non-cut mesh  in \cite{zhang2010positivity} and \cite{FU2022111600}  for more details.  
\end{proof}

\subsection{Mean values for higher order approximations}

Next we consider  high order schemes, where  as before the time discretization can be seen as a sequence of Euler steps. We therefore consider \eqref{scheme:cutDG2fully} with $p\ge 1$ together with the reconstruction in \eqref{eq:scalar:reconstruction}-\eqref{eq:schemestep3}.  Similarly to the scalar case we show a sufficient condition for the mean values on each macro-element to satisfy the  positivity property. We again use the $q$-point Gauss-Lobatto quadrature rule on each macro element, with $2 q-3 \geqslant p$. We will prove the following theorem.
\begin{thm}
Consider \eqref{scheme:cutDG2fully}-\eqref{eq:schemestep3} with $p\ge 1$ for  the Euler equations \eqref{euler:eq}, and with the reconstruction  done component-wise. 
 If the time step  $\Delta t$ satisfies 
\begin{align}\label{eq:euler:pp:time}
\frac{\Delta t}{\delta h}\lambda\leq \hat{w}_1,
\end{align}
then  $\bu_{h,i}^n|_{\Omega_i} \in G, i=1,\cdots,N_\Omega$ implies that the mean value on each macro element satisfies
$\bar{\bu}_{h,i}^{\mathcal{M},n+1}|_{\Omega_i}\in G$ for $i=1,\cdots,N_\Omega$. Here $h$ is the element size in the background mesh,  and $\delta$ is a given constant, which in \eqref{eq:largeel} defines when an intersection between an element and $\Omega_i$ is classified as large.  
$\lambda$ denotes the largest absolute value of the  Jacobian matrix $\partial \mf(\bu_h^n)/\partial \bu_h^n$ in the entire domain $\Omega$. Further $\hat w_1$ is the normalized  quadrature weight corresponding to the first quadrature  point of a $q$-point Gauss-Lobatto quadrature rule with  $2q-3\geq p$.
\end{thm}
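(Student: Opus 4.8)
The plan is to mirror the proof of Theorem 3.2 line by line, replacing the scalar interval $[\mm,\MM]$ by the convex admissible set $G$ and the scalar monotone update $H$ by the vector-valued first-order operator $\boldsymbol{H}$ from \eqref{def:euler:H}, whose positivity was already established in Theorem \ref{thm:Euler:1ord}. The three structural ingredients I would lean on are: convexity of $G$ (established before Section 4.1 via Jensen's inequality); the fact that after reconstruction $\bu_h^n$ is a single degree-$p$ polynomial with no interior jump on each macro-element, so the stabilization $J_s$ drops out and the $q$-point Gauss--Lobatto rule with $2q-3\geq p$ integrates it exactly; and the admissibility hypothesis $\bu_{h,i}^n|_{\Omega_i}\in G$, read as positivity of density and pressure at the quadrature nodes.

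First I would fix a macro-element $M\in\mcM_{h,i}$, take $v_h\equiv 1$ on the elements of $M$ and $0$ elsewhere, and reproduce \eqref{eq:scheme:scalar:high}--\eqref{eq:define:scheme:average} componentwise to obtain the mean-value update $\overline{\bu}_{h,i}^{M,n+1}=\overline{\bu}_{h,i}^{M,n}-\frac{\Delta t}{|I_M|}(\hat{\mf}_r-\hat{\mf}_l)$. Decomposing the mean by the quadrature, $\overline{\bu}_{h,i}^{M,n}=\sum_{\mu=1}^q\hat w_\mu\hat{\bu}_\mu^{M,n}$, introducing the internal Lax--Friedrichs flux $\widehat{\boldsymbol F}$ built from the two endpoint nodes $\hat{\bu}_1^{M,n},\hat{\bu}_q^{M,n}$, and using the symmetry $\hat w_1=\hat w_q$, I would split the update as
\begin{align*}
\overline{\bu}_{h,i}^{M,n+1}
=\sum_{\mu=2}^{q-1}\hat w_\mu\,\hat{\bu}_\mu^{M,n}
+\hat w_1\,\boldsymbol H\!\left(\hat{\bu}_0^{M,n},\hat{\bu}_1^{M,n},\hat{\bu}_q^{M,n};\tfrac{\lambda}{\hat w_1}\right)
+\hat w_q\,\boldsymbol H\!\left(\hat{\bu}_1^{M,n},\hat{\bu}_q^{M,n},\hat{\bu}_{q+1}^{M,n};\tfrac{\lambda}{\hat w_q}\right),
\end{align*}
with $\hat{\bu}_0^{M,n}=\bu_{h,l}^n(x_{M,l})$ and $\hat{\bu}_{q+1}^{M,n}=\bu_{h,r}^n(x_{M,r})$; here the internal flux $\widehat{\boldsymbol F}$ cancels in the telescoping, exactly as in Theorem 3.2. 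This exhibits $\overline{\bu}_{h,i}^{M,n+1}$ as a convex combination, with weights $\hat w_\mu$ summing to one, of the interior nodal values and two first-order Euler updates.

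To close the argument I would show that every term of this convex combination lies in $G$ and then invoke convexity. The interior nodal values $\hat{\bu}_\mu^{M,n}$, $\mu=2,\dots,q-1$, lie in $G$ directly by hypothesis. For the two boundary terms I would re-run the proof of Theorem \ref{thm:Euler:1ord}: each is a first-order update of the form \eqref{def:euler:H} but with $|I_M|$ effectively replaced by $\hat w_1|I_M|$, so that $\boldsymbol H=\bigl(1-\tfrac{\lambda\Delta t}{\hat w_1|I_M|}\bigr)\hat{\bu}_1^{M,n}+\tfrac{\lambda\Delta t}{2\hat w_1|I_M|}V_a+\tfrac{\lambda\Delta t}{2\hat w_1|I_M|}V_b$ with $V_a,V_b$ as in Theorem \ref{thm:Euler:1ord} and $p(V_a),p(V_b)>0$ (Lemma 3.1 of \cite{FU2022111600}); the coefficients are nonnegative precisely when $\tfrac{\lambda\Delta t}{\hat w_1|I_M|}\leq 1$, which follows from $|I_M|\geq\delta h$ and the hypothesis $\tfrac{\Delta t}{\delta h}\lambda\leq\hat w_1$. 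Convexity of $G$ then yields $\overline{\bu}_{h,i}^{\mathcal{M},n+1}\in G$. The main obstacle, relative to the scalar case, is that positivity of the boundary updates cannot be obtained from monotonicity of $\boldsymbol H$; it genuinely requires the $V_a/V_b$ convex splitting of the Lax--Friedrichs update and the lemma $p(V_a),p(V_b)>0$, and one must carefully track that the effective wave speed for these two terms is $\lambda/\hat w_1$ rather than $\lambda$ --- this is exactly what forces the sharper factor $\hat w_1$ on the right-hand side of \eqref{eq:euler:pp:time}.
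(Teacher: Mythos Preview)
Your proposal is correct and follows essentially the same approach as the paper: derive the mean-value update on each macro-element, expand the mean via the $q$-point Gauss--Lobatto rule, insert an internal Lax--Friedrichs flux between the two endpoint nodes to split into interior nodal values plus two first-order $\boldsymbol{H}$-updates with effective speed $\lambda/\hat w_1$, and conclude by Theorem~\ref{thm:Euler:1ord} and convexity of $G$. If anything, you spell out the $V_a/V_b$ step more explicitly than the paper does, which simply cites back to the piecewise-constant result.
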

\begin{proof} Similar to the high order Cut-DG scheme \eqref{eq:define:scheme:average} for scalar problems,   we have for the mean values of the  reconstructed approximation on  $I_M$ in \eqref{eq:def:IM} that 
\begin{align}\label{eq:scheme:uhm:2}
\overline{\bw}_{h,i}^{M,n+1}=\overline{\bw}_{h,i}^{M,n}-\frac{\Delta t}{|I_M|}(\hat{\mf}_{r}-\hat{\mf}_{l}).
\end{align}
We will now show that $\overline{\bu}_{h,i}^{M,n+1} \in G$.  We introduce the Gauss-Lobatto quadrature to decompose the mean values on each macro-element, see \eqref{eq:define:average:decompose}, and rewrite  \eqref{eq:scheme:uhm:2} as
\begin{align*}
\overline{\bw}_{h,i}^{M,n+1}=&\sum_{\mu=1}^q \hat{w}_\mu\hat{\bw}^{M,n}_{\mu}-\frac{\Delta t}{|I_M|}(\hat{\mf}_r-\hat{\mf}_l)=\\
=&\sum_{\mu=2}^{q-1} \hat{w}_\mu\hat{\bw}^{M,n}_{\mu}
+\hat{w}_1\boldsymbol{H}(\hat{\bw}_{0}^{M,n},\hat{\bw}^{M,n}_{1},\hat{\bw}^{M,n}_{q};
\frac{\lambda}{\hat{w}_{1}})
+\hat{w}_q\boldsymbol{H}(\hat{\bw}^{M,n}_{1},\hat{\bw}^{M,n}_{q},\hat{\bw}^{M,n}_{q+1};\frac{\lambda}{\hat{w}_{q}}).
\end{align*}
Here  $\hat{\bu}^{M,n}_{0}=\bu_{h,l}^{n}(x_{M,l})$ and $\hat{\bu}^{M,n}_{q+1}=\bu_{h,r}^{n}(x_{M,r})$ denote the approximations from the left side and right side of $I_M$, respectively.    With the positivity-property of the piecewise constant scheme \eqref{scheme:Euler} and time step restriction \eqref{eq:euler:pp:time},  it follows that $\frac{\Delta t}{|I_M|}
\lambda\leq\hat{w}_1$ implies $\boldsymbol{H}(\hat{\bw}_{0}^{M,n},\hat{\bw}^{M,n}_{1},\hat{\bw}^{M,n}_q;\frac{\lambda}{\hat{w}_{1}})\in G$ and $\boldsymbol{H}(\hat{\bw}^{M,n}_1,\hat{\bw}^{M,n}_q,\hat{\bw}^{M,n}_{q+1};\frac{\lambda}{\hat{w}_q})\in G$. Thus, we  get $\overline{\bw}_{h,i}^{M,n+1}\in G$ since $\overline{\bu}_{h,i}^{M,n+1}$ is a convex combination of solutions in $G$.
\end{proof}

\subsection{Positivity preserving limiting for the Euler equations}
As for the standard DG method for the Euler equations on a non-cut mesh, we need to apply a positivity preserving limiter to ensure the positivity of density and pressure for $p\geq 1$. We will use the limiter developed by Zhang ad Shu \cite{zhang2010positivity}, which is  briefly introduced below. It retains conservation everywhere, and accuracy in smooth regions. We will apply it  to the reconstructed solution in each inner stage of each time step. By the previous theorem, we know that  mean values on all macro-elements  are in $G$. Let $M\in \mathcal{M}_{h,i}$ represent a macro-element in $\Omega_i$ and $\bu_h^M=(\rho_h^M, m_h^M, E_h^M)$ denote the corresponding solution on the macro-element $M$.  By  the previous theorem  $\overline \rho_{h}^{M}>0$ and $\overline p_{h}^{M}>0$. We assume there exists a small number $\varepsilon>0$ such that $\bar{\rho}_{h}^{M} \geqslant \varepsilon$ and $\bar{p}_{h}^{M} \geqslant \varepsilon$ for all macro-elements. In our computation,  we take $\varepsilon=10^{-8}$.  We first limit the density by replacing $\rho_h^M(x)$ by $\rho_h^{M,(l)}(x)$
\begin{align}\label{euler:limit:rho}
{\rho}_{h}^{M,(l)}(x)=\theta_1\left(\rho_{h}^{M}(x)-\bar{\rho}_{h}^{M}\right)+\bar{\rho}_{h}^{M},
\end{align}
where
\begin{align}\label{euler:limit:rho:theta1}
\theta_1=\min \left\{\frac{\bar{\rho}_{h}^{M}-\varepsilon}{\bar{\rho}_{h}^{M}-\rho_{\min }}, 1\right\}, \quad \rho_{\min }=\min _k \rho_{h}^{M}\left(\widehat{x}_{\mu}^{M}\right) .
\end{align}
 Then, let $\widehat{\mathbf{q}}_h^M(x)=\left({\rho}_{h}^{M,(l)}(x), m_{h}^{M}(x), E_{h}^{M}(x)\right)^T$ and  denote $\widehat{\mathbf{q}}_h^M\left(\hat x_\mu^M\right)$  by $\widehat{\mathbf{q}}_{\mu}^{M}$.    
 Next, $\widehat{\mathbf{q}}_{h}^{M}$  is limited to enforce the positivity of the pressure.  Define
\begin{align}\label{euler:limiter:G}
&G^{\varepsilon}=\left\{\bw=(\rho,m,E)^\T \mid \rho \geqslant \varepsilon \text { and } p \geqslant \varepsilon\right\}, \, \partial G^{\varepsilon}=\left\{\bw=(\rho,m,E)^\T\mid \rho \geqslant \varepsilon \text { and } p=\varepsilon\right\},
\end{align}
and
\begin{align}\label{euler:limiter:s}
\mathbf{s}^\mu(t)=(1-t) \overline{\bw}_{h}^{M,n}+t\,\widehat{\mathbf{q}}_{h}^{M}\left(\widehat{x}_{\mu}^{M}\right), \quad 0 \leqslant t \leqslant 1 .
\end{align}

Let  $\mathbf{s}_{\varepsilon}^\mu$ be solution of  $p\left(\mathbf{s}^\mu\left(t_{\varepsilon}^k\right)\right)=\varepsilon$ when  $p\left(\widehat{\mathbf{q}}_{\mu}^{M}\right)<\varepsilon$, and $\mathbf{s}_{\varepsilon}^\mu=\widehat{\mathbf{q}}_{\mu}^{M}$ if $p\left(\widehat{\mathbf{q}}_{\mu}^{M}\right) \in G^{\varepsilon}$. 
Finally, we modify the solution by 
\begin{align}\label{euler:limiter:p}
{\bw}_{h}^{M,(l)}(x)=\theta_2\left(\hat{\mathbf{q}}_{h}^{M}(x)-\overline{\bw}_{h}^{M}\right)
+\overline{\bw}_{h}^{M}, \quad \theta_2=\min _{\mu=1,2, \ldots, q} \mathbf{s}_{\varepsilon}^\mu. 
\end{align}
We note that it is hard to find the exact minimum value $\theta_2$ on the cut mesh. We use the Gauss-Lobatto points on the macro-element and on each element intersection $\Omega_i$ involved in the macro-element $M$. Finally, we set $\bu_{h,i}^{\mathcal{M},n}|_{\Omega_i}={\bw}_{h}^{M,(l)}(x)$ on the macro-element $M$.  

For clarity, we formulate our proposed positivity preserving Cut-DG method  for the  Euler equations in Algorithm 2 in the Appendix.

\section{Numerical results} \label{sec:numres}
In this section we present some  numerical examples that verify the bound preserving properties of our proposed bound preserving  Cut-DG scheme, which includes reconstruction on macro-elements and applying bound preserving limiters.  In all  the Cut-DG computations we  use a uniform mesh with $N$  elements as the background mesh and $N\geq 4$.  We assume we have $N/4$ subdomains interfaces, which cut  $N/4$ elements in the middle part of the domain. Each interface cuts an element  into two pieces with size $\alpha_k h$ and $1-\alpha_k h$, respectively, where $\alpha$ is fixed and 
\begin{equation}\label{eq:define:alphak}
\alpha_k=s\alpha,  \,  s \in[10^{-6},1] \text{ is a random number.}
\end{equation}
Thus we have $N/4+1$ subdomains. The parameters in the stabilization (see \eqref{stable1}) and  in \eqref{eq:largeel} are set to $\gamma_1=0.75,\gamma_0=0.25$, $\omega_k=1/(k!)^2$, and $\delta=0.2$,  respectively.

Two different high order strong stability preserving (SSP) methods are as time discretization. To define the methods consider $u_t=L(u)$, where $L(u)$ is the spatial operator. The first method is the third order Runge-Kutta method,  which is  convex combinations of  the forward Euler method
\begin{align}\label{eq:TVDRK3}
&u^{(1)}=u^{n}+\Delta t L\left(u^{n}\right) \\
&u^{(2)}=\frac{3}{4} u^{n}+\frac{1}{4}u^{(1)}+\se{\frac{\Delta t }{4}}L\left(u^{(1)}\right), \\
&u^{n+1}=\frac{1}{3} u^{n}+\frac{2}{3}u^{(2)}+\frac{2\Delta t}{3} L\left(u^{(2)}\right).
\end{align} 
 The second scheme is the third order  multi-step time discretization \cite{gottlieb2001strong} 
\begin{align}\label{eq:multi-step3}
u^{n+1}=\frac{16}{27}\left(u^{n}+3 \Delta t L\left(u^{n}\right)\right)+\frac{11}{27}\left(u^{n-3}+\frac{12}{11} \Delta t L\left(u^{n-3}\right)\right).
\end{align}
By the previous analysis each time step and interior stage will be bound preserving if the time step is sufficiently restricted. It follows that the proposed strategy, with both time discretizations, will satisfy  the maximum principle or positivity of density and pressure.  In the next section, we compute the two time discretization schemes and show that optimal accuracy is obtained with third order multi-step time discretizations. We note that we use $P^p$ with $p=1,2,3$ to denote the space of piecewise $p$-th degree polynomials $V_h^p$.

\subsection{Scalar linear case}
In this subsection,  we solve the one dimensional linear advection equation with  periodic boundary condition
\begin{align}\label{eq:test:linear}
&u_t+u_x=0, t>0,
\end{align}
together with  smooth and non-smooth initial data, respectively.
\subsubsection{Accuracy test with smooth initial data}
We first use 
our proposed bound preserving Cut-DG method \eqref{scheme:cutDG2fully}-\eqref{eq:schemestep3} together with the bound preserving limiter in \eqref{eq:limit:uj} to solve \eqref{eq:test:linear},  with smooth initial data  $u(x,0)=1.0+0.5\sin(\pi x)$ in the domain $[0,2]$. The exact solution is $u(x,t)=1+0.5\sin(\pi(x-t))$. The problem 
  is solved on a mesh where the elements in interval $[0.75,1.25]$ are cut. In our test, we  use $\alpha=0.1$ in \eqref{eq:define:alphak} with $N=20,40,80,160,320,640$.  Other alpha values have also been tested, with similar results ( and therefore not reported here).  
We first solve the problem using the third order SSP-RK method \eqref{eq:TVDRK3} and the time step is taken to be $\Delta t=Ch$ for second and third order approximations and $\Delta t=Ch^{4/3}$ for fourth order approximations, where $C=0.5(1-\max(\alpha_k))\hat{w}_1$.  Here $h^{4/3}$ is used for $P^3$ approximations to balance the errors  caused by time and space discretization, and $\hat{w}_1$ is the weight  used in the Gauss-Lobatto quadrature.  
 The results at $t=1$ are shown in Figure \ref{fig:smoothinitial:accuracy}. We can observe that the Cut-DG method using the third Runge-Kutta  method and the bound preserving limiter has accuracy degeneracy. This is also the case for  the standard DG method with Runge-Kutta time discretization \cite{zhang2010maximum}.  When the third order  multi-step method \eqref{eq:multi-step3} is applied with $\Delta t=h/24, h^{4/3}/15$ for $P^2,P^3$ approximation, the optimal accuracy is obtained, see the right panel of  Figure \ref{fig:smoothinitial:accuracy}.  
The results verify that  the reconstruction of solutions and  the bound preserving limiter on  macro-elements in our proposed  method can keep the same  high order accuracy  as the standard Cut-DG method \cite{fu2021high,fu2022high}.  The results also show that the  proposed scheme enforces the maximum principle.  

\begin{figure}[tbhp]
  \centering
\includegraphics[width=2.8in]{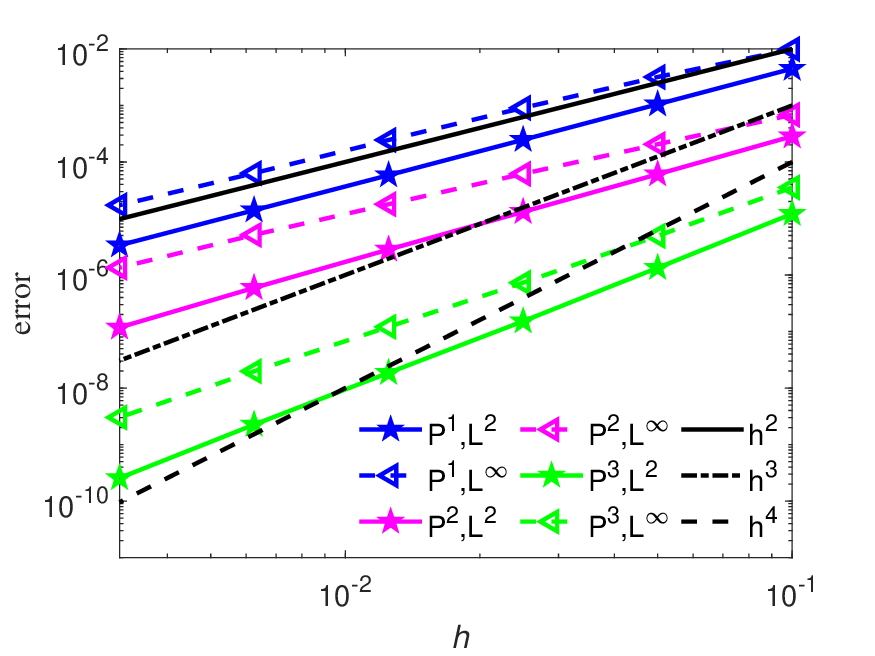}
\includegraphics[width=2.8in]{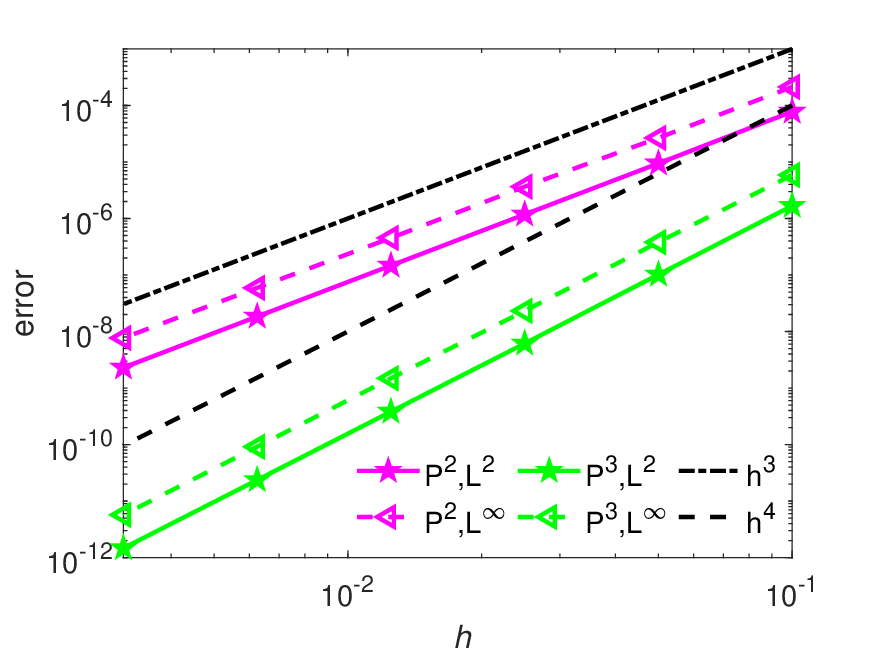}
\caption{The errors in the numerical solutions  for the advection equation  with smooth initial data at $t=1.0$. Errors are shown in both the $L^2$-norm (solid lines) and $L^\infty$-norm (dashed lines).  Left:  time discretization using  third order Runge-Kutta method. Right: time discretization  using  third order multi-step discretization in \eqref{eq:multi-step3}.
 }\label{fig:smoothinitial:accuracy}
\end{figure}

\subsubsection{Non-smooth initial data}
Here, we solve the advection problem \eqref{eq:test:linear} with periodic boundary condition on the   domain $\Omega=[0,1]$ and non-smooth initial data
\begin{equation}\label{eq:nonsmoothinitial}
u(x,0)=
\left\{\begin{array}{ll}
{1} & {0.1<x<0.5}, \\
{0} & {\text{otherwise.}}
\end{array}\right.
\end{equation}
 We use a computational mesh, where the elements in the interval $[0.375,0.625]$ are cut.  In particular we note that the discontinuity is initially  located on the common edge of one cut element and its neighbour.  
We first solve this problem using the cut DG discretization  \eqref{scheme:cutDG2} with $p=1,2$, but without reconstruction and without any bound preserving limiter. We  observe  overshoots and undershoots of the solution near the discontinuities, see the left panel of Figure  \ref{fig:nonsmoothinitial:p1:overshoot}.
 In the middle panel of Figure \ref{fig:nonsmoothinitial:p1:overshoot},  we show the results when the bound preserving limiter \eqref{eq:limit:uj} is applied to the solution on each element. A  clear overshoot still exists in the second order approximations, which will decrease with increasing  degree of the  polynomials. We point out that we still observe overshoots or undershoots during time evolution in the $P^2,P^3$ approximations but they are quite small and decrease with time. The results from our proposed Cut-DG method with bound preserving limiter \eqref{eq:limit:uj} and reconstruction \eqref{eq:scalar:reconstruction} on the macro-elements are shown in the right panel of Figure \ref{fig:nonsmoothinitial:p1:overshoot}. We did not observe any overshoots or undershoots up to the final time. We also solved this problem on a mesh where the cut elements are randomly located and have  different cut sizes, with  similar results.
\begin{figure}[!htbp]
  \centering
\includegraphics[width=1.8in]{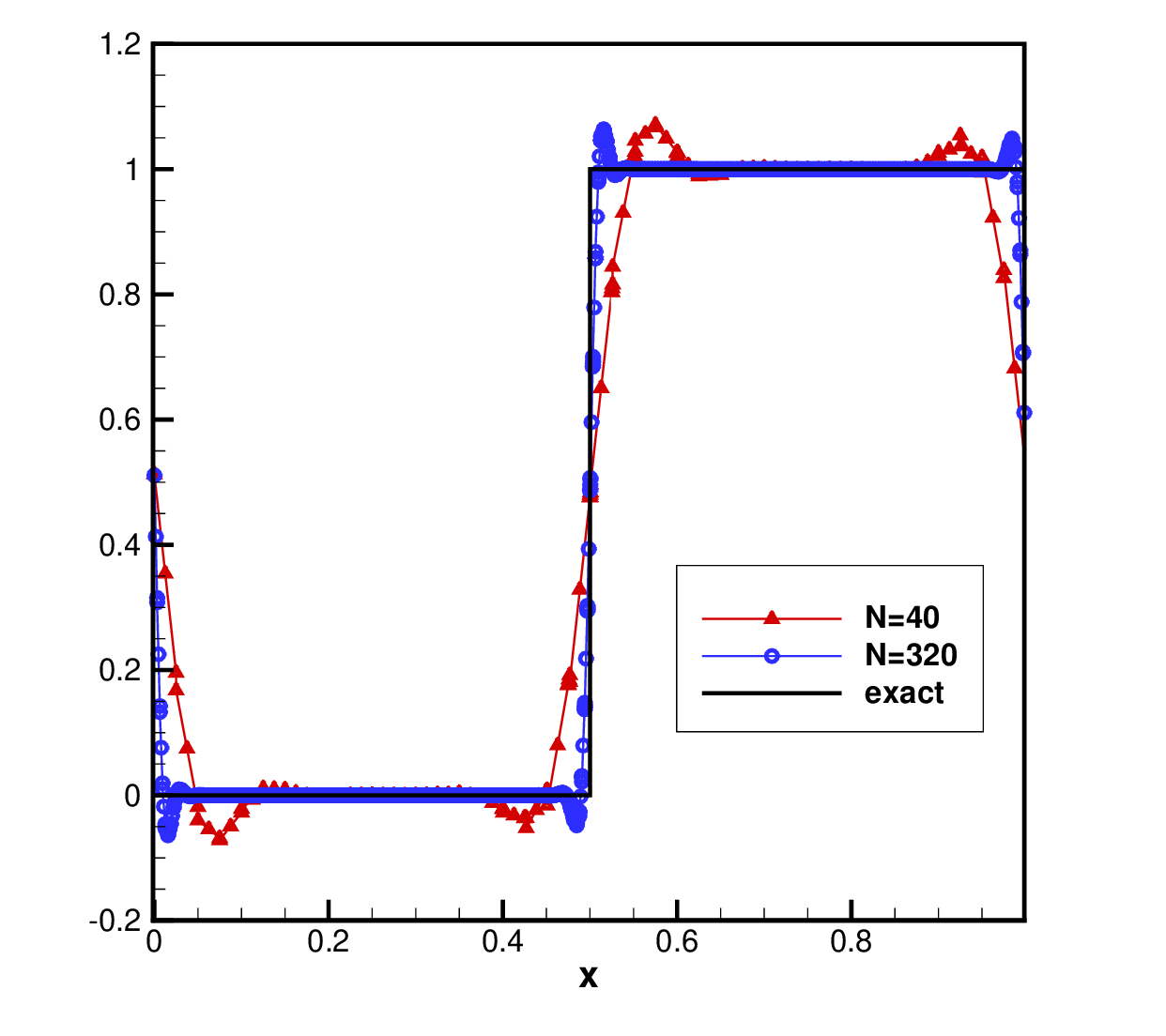}
\includegraphics[width=1.8in]{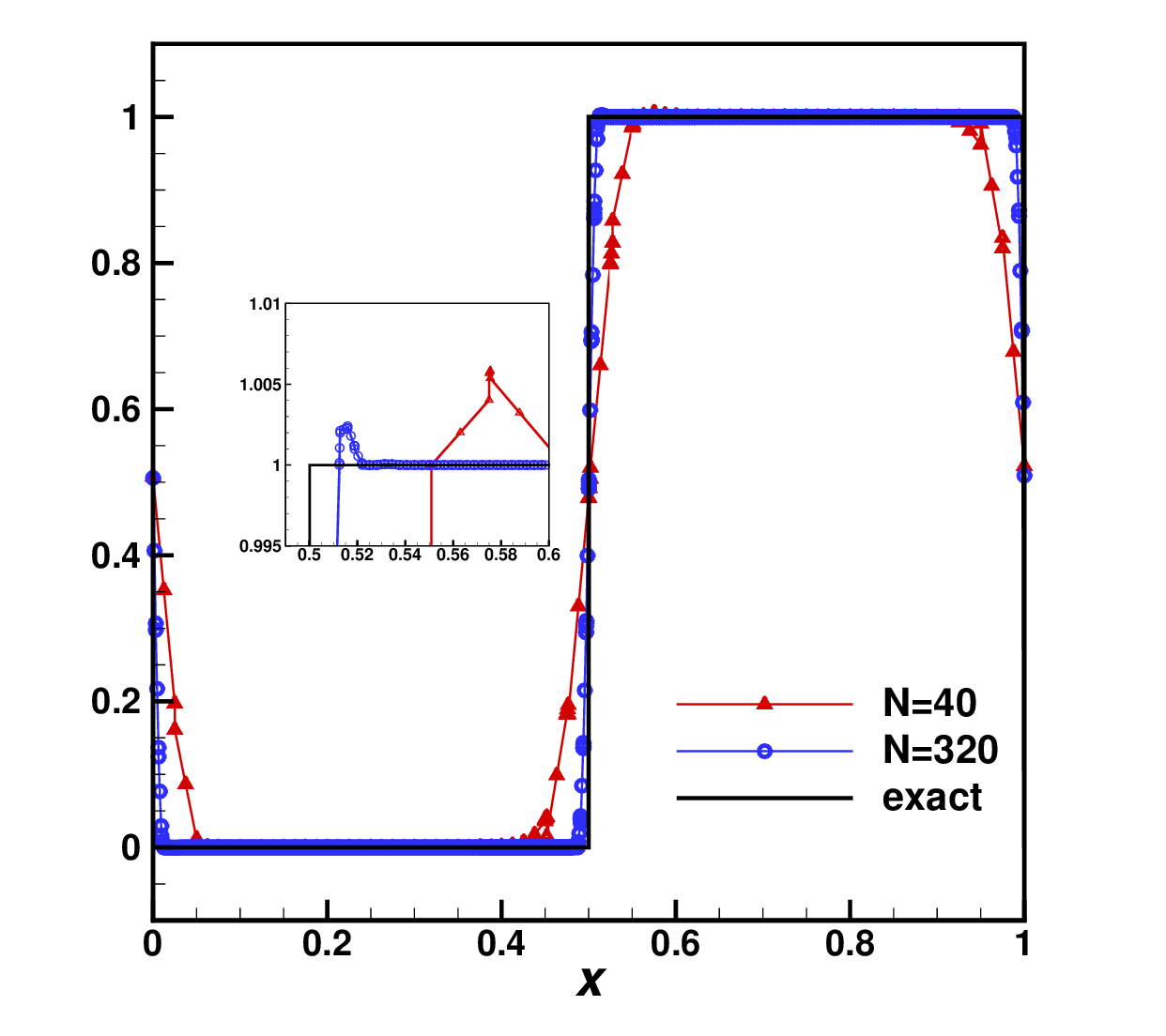}
\includegraphics[width=1.8in]{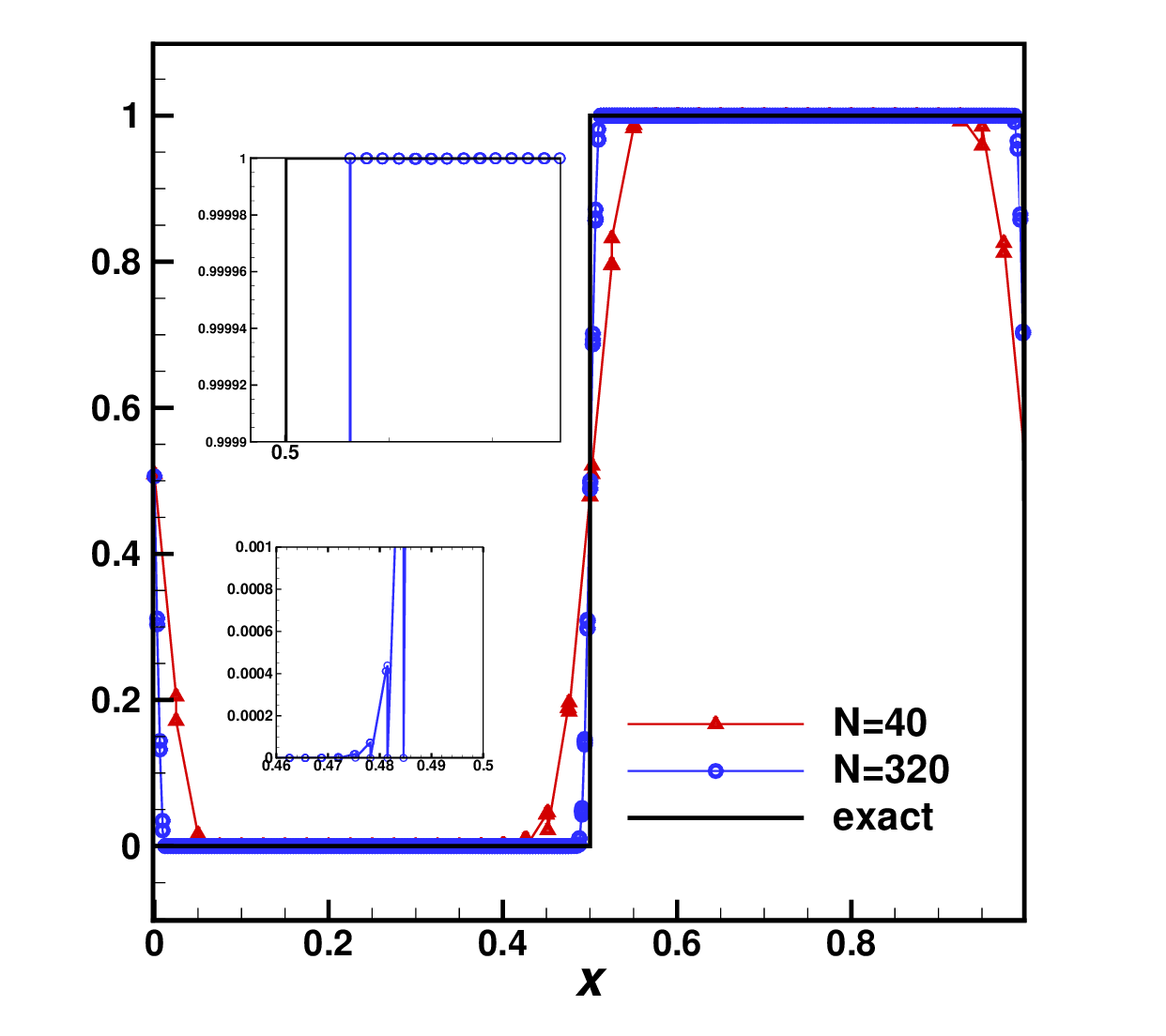}
\includegraphics[width=1.8in]{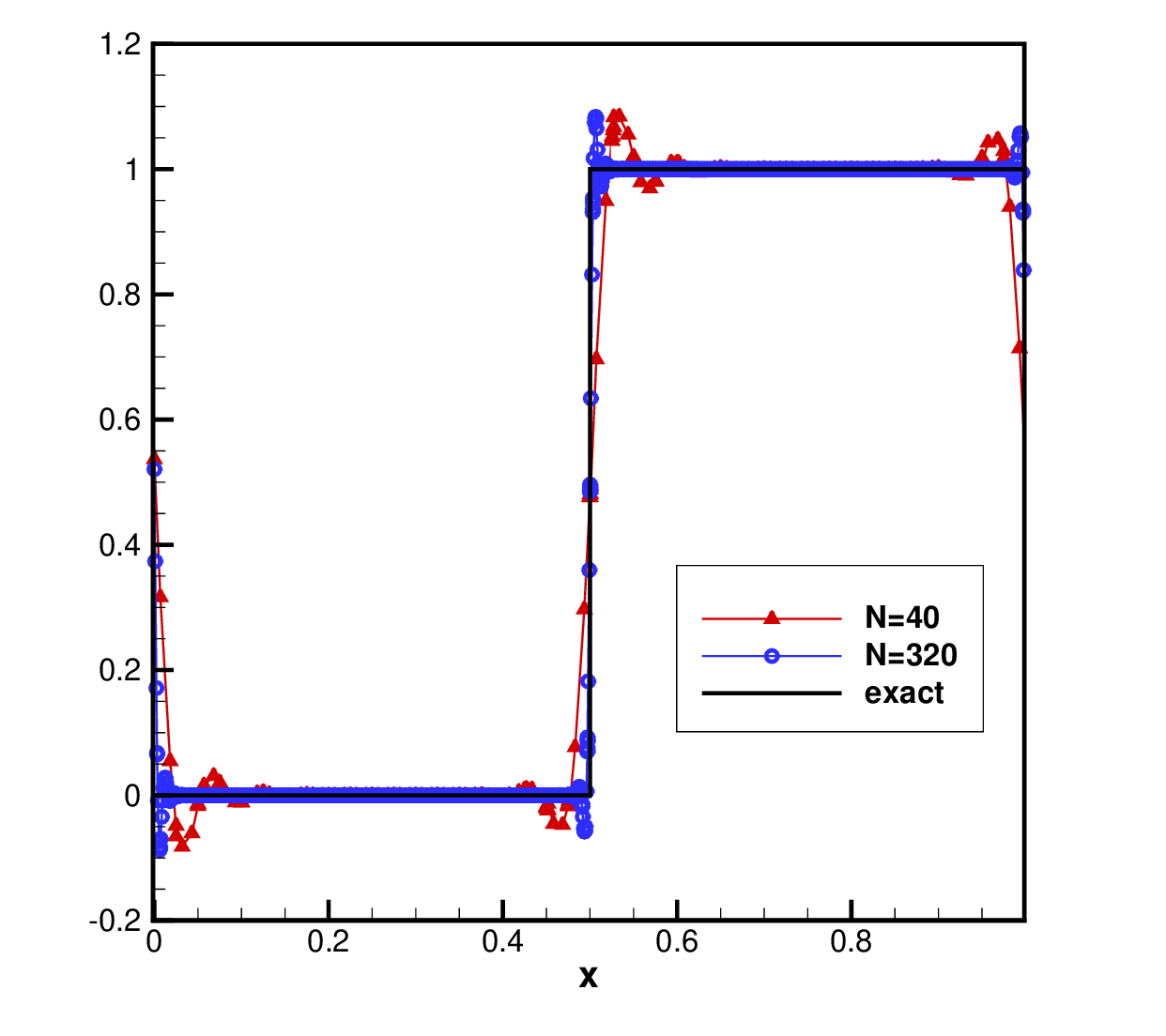}
\includegraphics[width=1.8in]{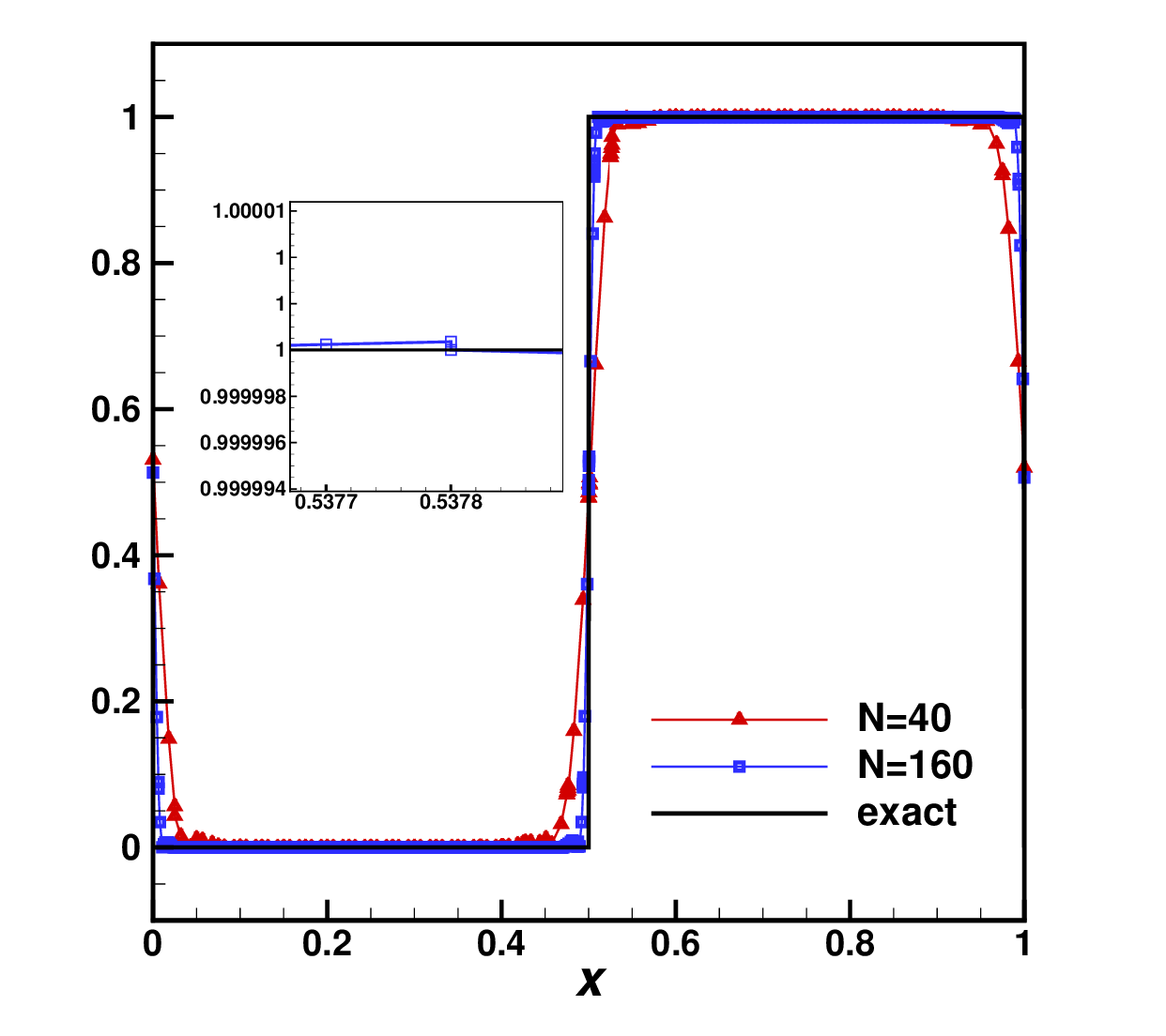}
\includegraphics[width=1.8in]{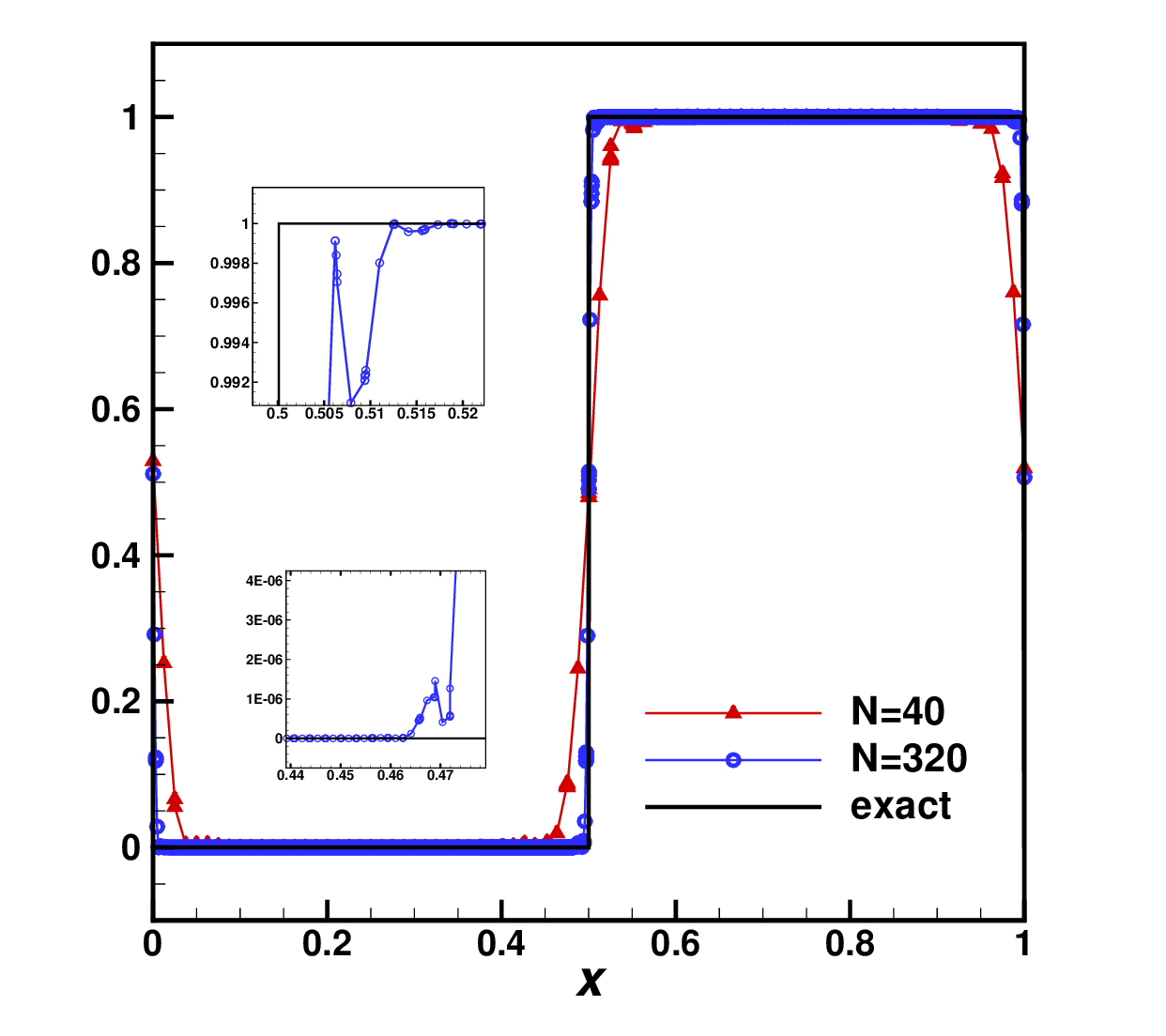}
\caption{Numerical $P^1$ (top), $P^2$ (bottom) solutions  for the advection equation  with non-smooth initial data at $t=1.0$.  
Left: without reconstruction and without limiter. 
 Middle: without reconstruction but  limiter \eqref{eq:limit:uj} is applied. Right: solution with reconstruction and limiter \eqref{eq:limit:uj} applied on macro-elements.
 }\label{fig:nonsmoothinitial:p1:overshoot}
\end{figure}

\subsection{Burgers'  equation}
In this subsection, we consider the Burgers' equation
\begin{align}\label{eq:Burges}
&u_t+\left(\frac{u^2}{2}\right)_x=0,\quad t>0,
\end{align}
together with different initial data  and suitable boundary conditions.

\subsubsection{Smooth initial data}
We first solve  the Burgers' equation \eqref{eq:Burges} with smooth  initial data $u_0(x)=\sin(\pi x), x\in[0,2]$ and periodic boundary condition. This problem has a known solution, which we use as a reference when computing errors. The solution is initially smooth, but at $t=\frac{1}{\pi}$ a shock forms at $x=1$.  We compute until time $t=0.2$, which is before the shock appears. Uniform meshes with $N=20,40,80,160,320,640$ elements are used as background mesh. The cut elements are located in $[0.75,1.25]$. 
The  third order multi-step \eqref{eq:multi-step3} is used for time discretization. In Figure \ref{fig:burgers:smoothinitial:accuracy}, the $L^2$- and $L^\infty$-errors are shown from our  proposed bound preserving Cut-DG scheme. We observe the third and fourth order accuracy  for $P^2,P^3$ approximations,  respectively.  

\begin{figure}[!htbp]
  \centering
\includegraphics[width=2.8in]{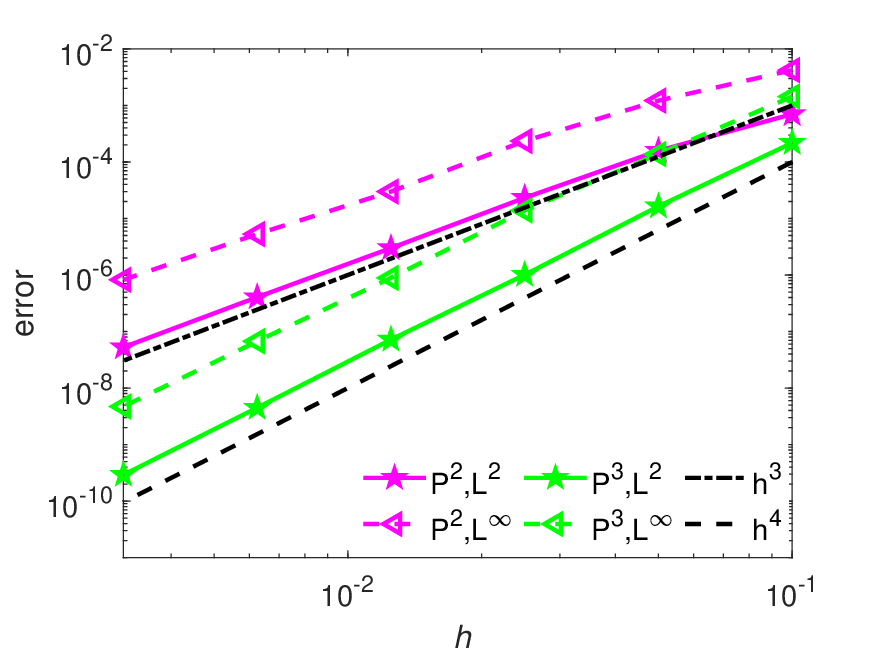}
\caption{ Errors in the numerical solution of Burgers' equation  with smooth initial data, before a shock has formed. Our proposed Cut-DG method based on piecewise $P^2$ and $P^3$ approximations, with reconstruction and bound preserving limiter  \eqref{eq:limit:uj}was used  on macro-elements.  
 }\label{fig:burgers:smoothinitial:accuracy}
\end{figure}

Next we solve Burgers' equation with $P^3$ approximations until time $t=0.5$,  when a shock has formed. The third order RK method \eqref{eq:TVDRK3} is used for the time discretization to save computational cost, since multi-step methods need to save more informations on different time levels. 
Results for $h=1/20,1/320$ (corresponding to $10, 160$ small cut elements) with the bound preserving limiter are shown in Figure \ref{figure:burgesshock2:onlymacro}.  The left panel in  Figure \ref{figure:burgesshock2:onlymacro} shows the results from the scheme \eqref{scheme:cutDG2fully} without reconstruction but with bound preserving limiter. Note that  the numerical  solutions do not satisfy maximum principle.  The middle panel in Figure \ref{figure:burgesshock2:onlymacro}   shows the results from our proposed bound preserving Cut-DG method with reconstruction on all macro-elements, which verifies our theoretical analysis.  We did not observe any overshoots/undershoots at any time. Note that the proposed scheme on the fine mesh can capture the shock quite well, even though we did not apply a slope limiter. On the right side of Figure \ref{figure:burgesshock2:onlymacro}, we show the results from the scheme with the reconstruction \eqref{eq:def:uhm} applied only on those macro-elements where the solution does not satisfy the maximum principle in an element belonging to these macro-elements. We can observe that the numerical solution still satisfies the maximum principle, even when the reconstruction is not applied on all macro-elements.
\begin{figure}[!htbp]
  \centering
  \includegraphics[width=1.8in]{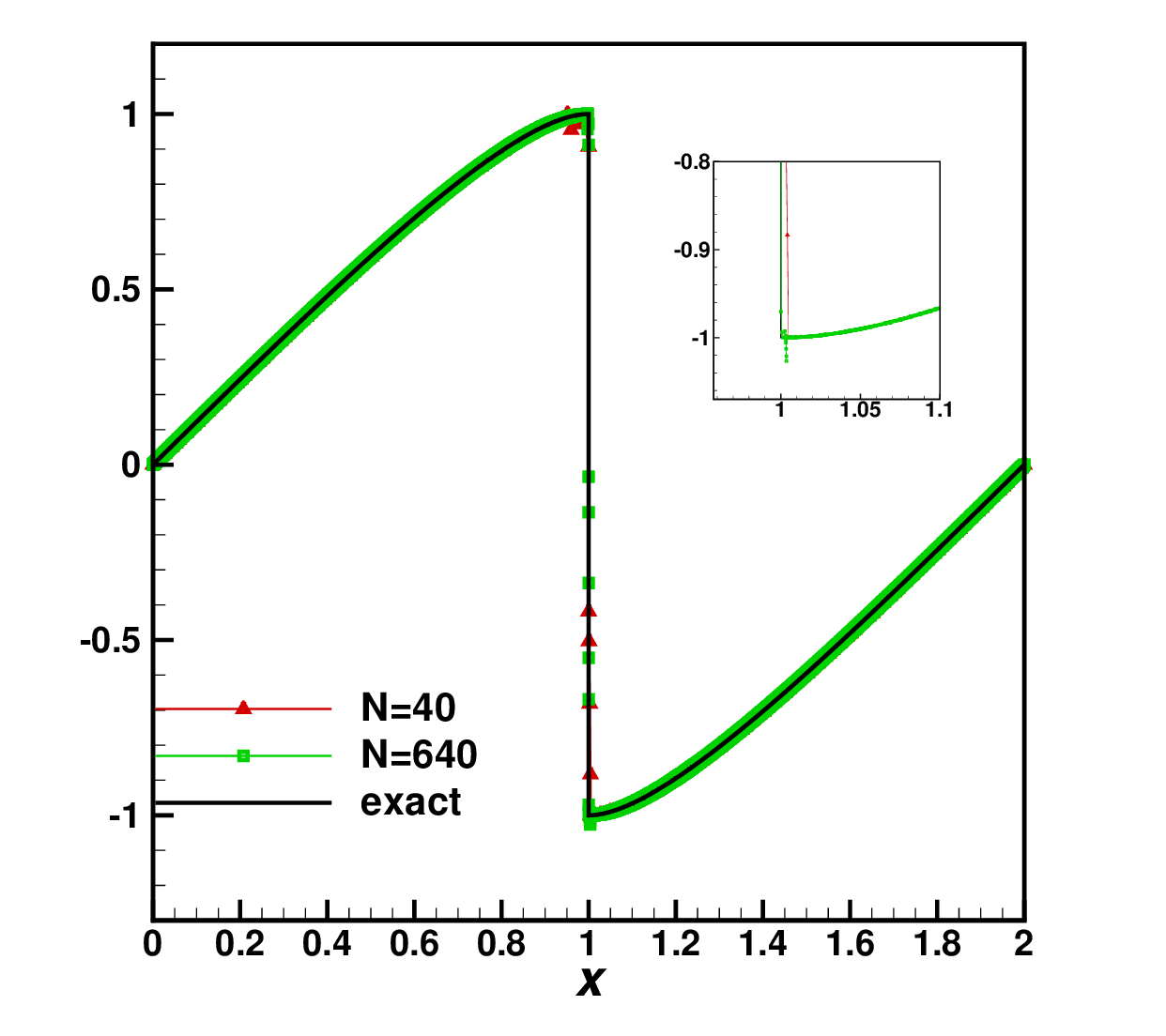}
  \includegraphics[width=1.8in]{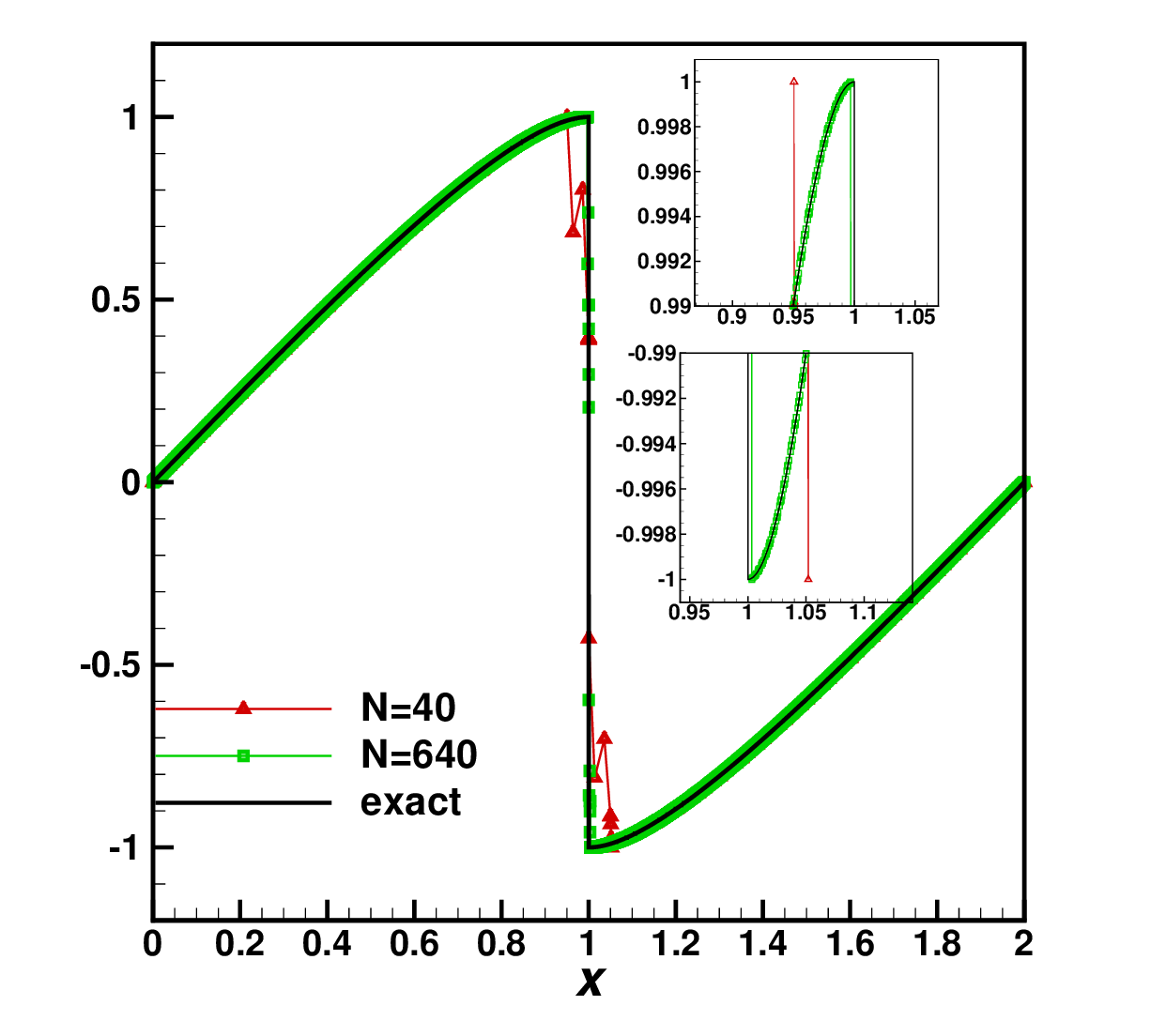}
   \includegraphics[width=1.8in]{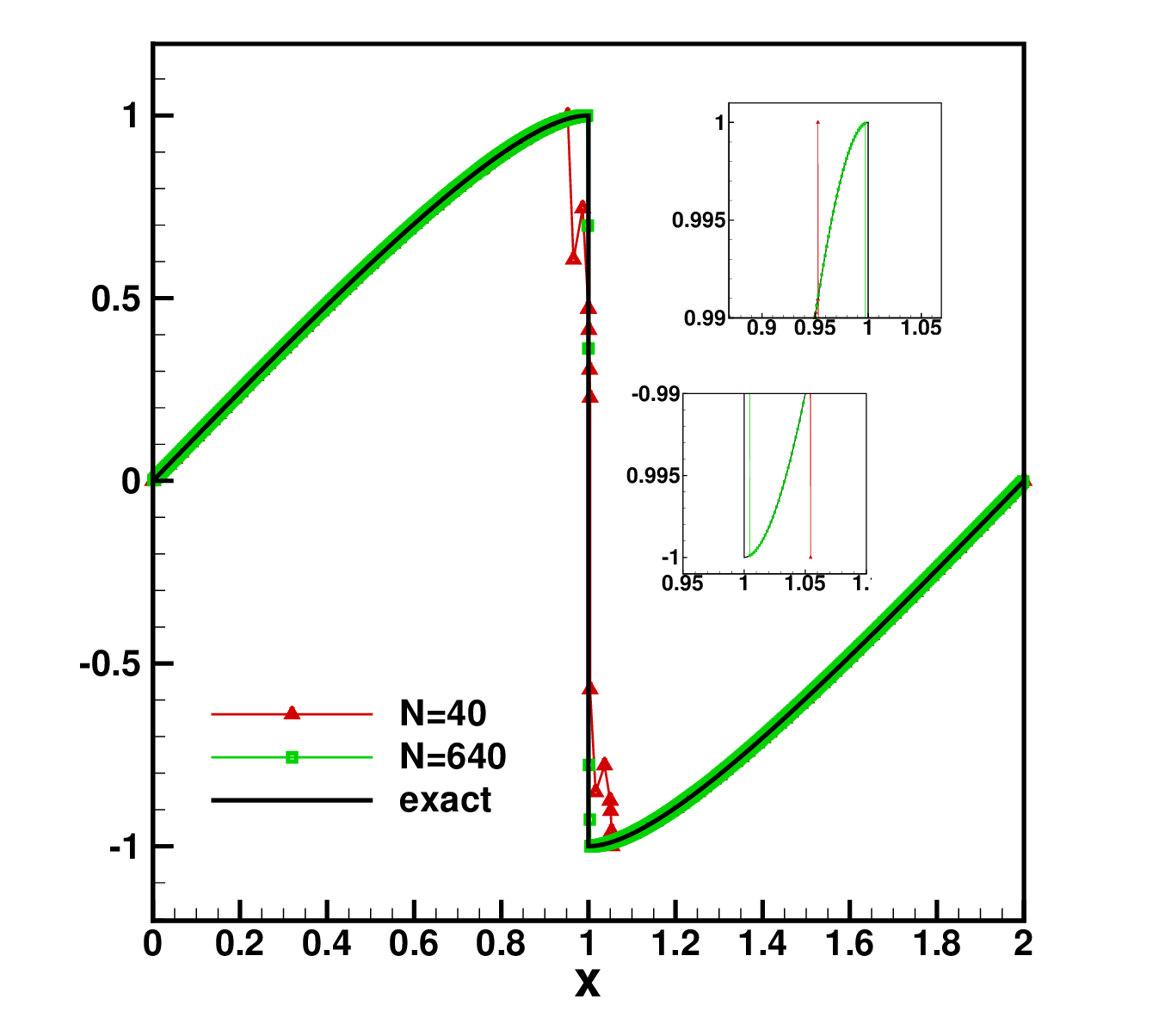}
  \caption{Numerical fourth order solutions  for the Burgers' equation  with smooth initial data at $t=0.5$.  Left: Basic scheme without reconstruction but with limiter. Middle: Proposed scheme including reconstruction and limiter \eqref{eq:limit:uj} applied on all  macro-elements. Right: Proposed scheme including reconstruction and limiter \eqref{eq:limit:uj} is applied only on macro-elements, where the numerical solution does not satisfy the maximum principle on the involved elements.}\label{figure:burgesshock2:onlymacro}
\end{figure}

\subsubsection{Riemann problems}
Consider Burgers' equation \eqref{eq:Burges} on $[-2,2]$ with initial data
\begin{align*}
u_0(x)=
\left\{\begin{array}{ll}
{u_l,} & {x\leq0,} \\
{u_r,} & {x>0.}
\end{array}\right.
\end{align*}
We will use our proposed bound preserving  Cut-DG scheme \eqref{scheme:cutDG2fully}-\eqref{eq:schemestep3} with $P^1,P^2,P^3$ approximations together with the bound preserving limiter \eqref{eq:limit:uj}. All elements in the interval $[-0.5,0.5]$ are cut. We solve with two sets of initial data, $u_l=-1<0<u_r=1$ and $u_l=1>0>u_r=-0.5$, respectively. 
Figure \ref{figure:Rarefaction:moveshock} shows the numerical solutions from the proposed bound preserving Cut-DG scheme based on $P^p, p=1,2,3$ approximations, together with third order RK method \eqref{eq:TVDRK3}  for time discretization. Our proposed scheme can simulate these two Riemann problems without violating
the maximum principle.  We note that violations of the maximum principle occur    near the shock in the case $u_l=1>0>u_r=-0.5$, when the bound preserving limiter is  applied to the basic  scheme \eqref{scheme:cutDG2fully} without reconstruction.
\begin{figure}[bthp]
  \centering
 \includegraphics[width=1.8in]{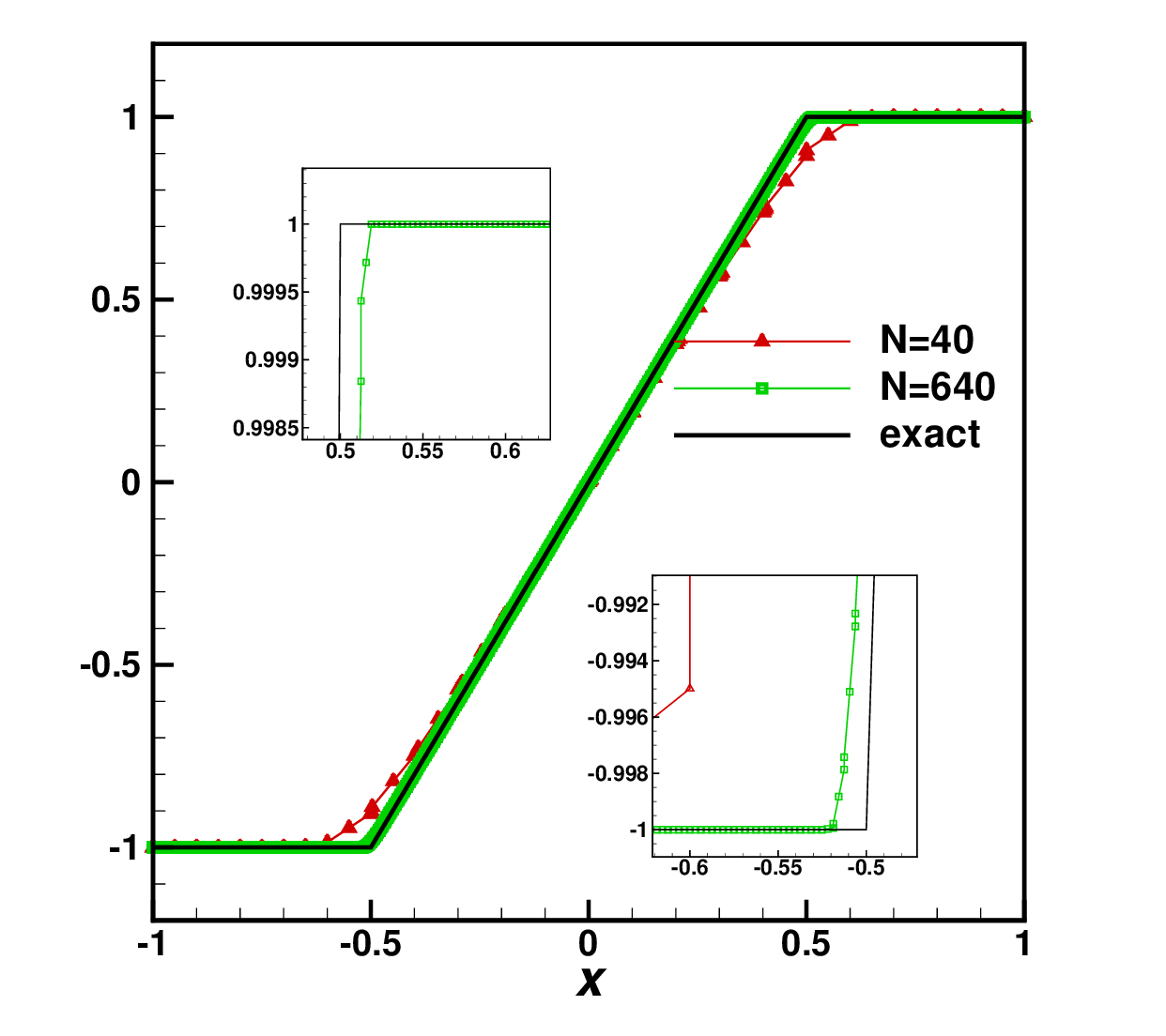}
\includegraphics[width=1.8in]{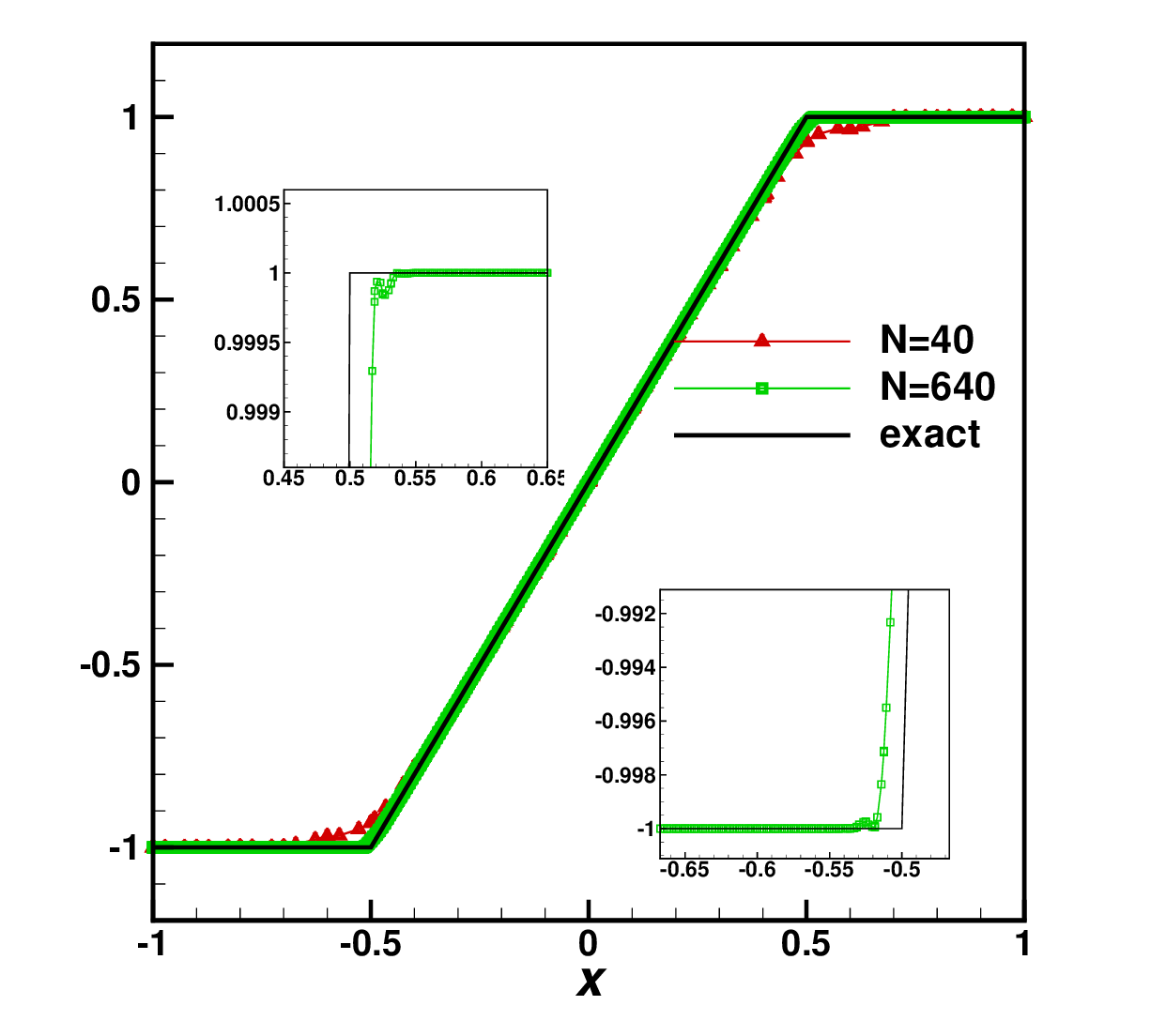}
\includegraphics[width=1.8in]{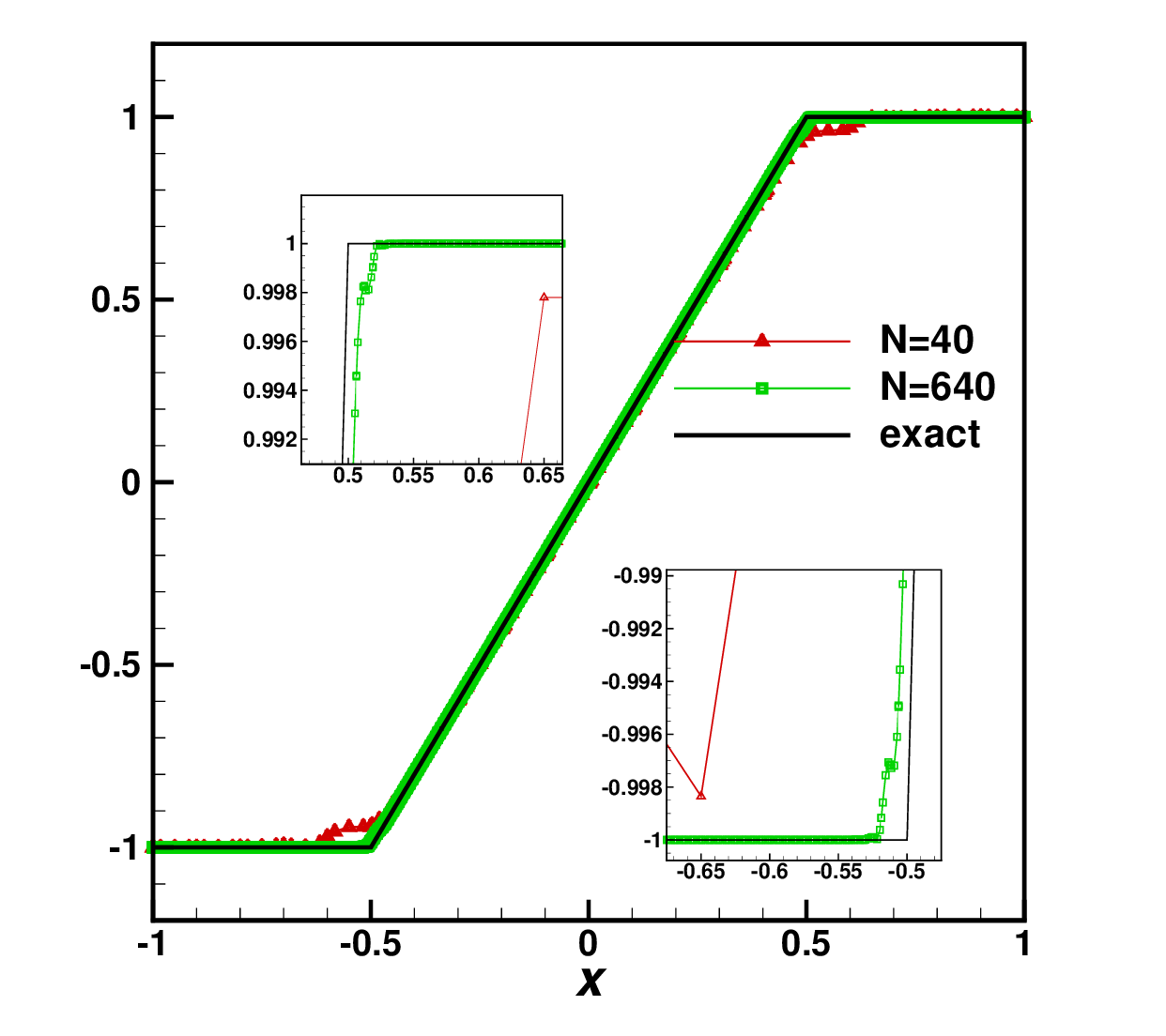}
\includegraphics[width=1.8in]{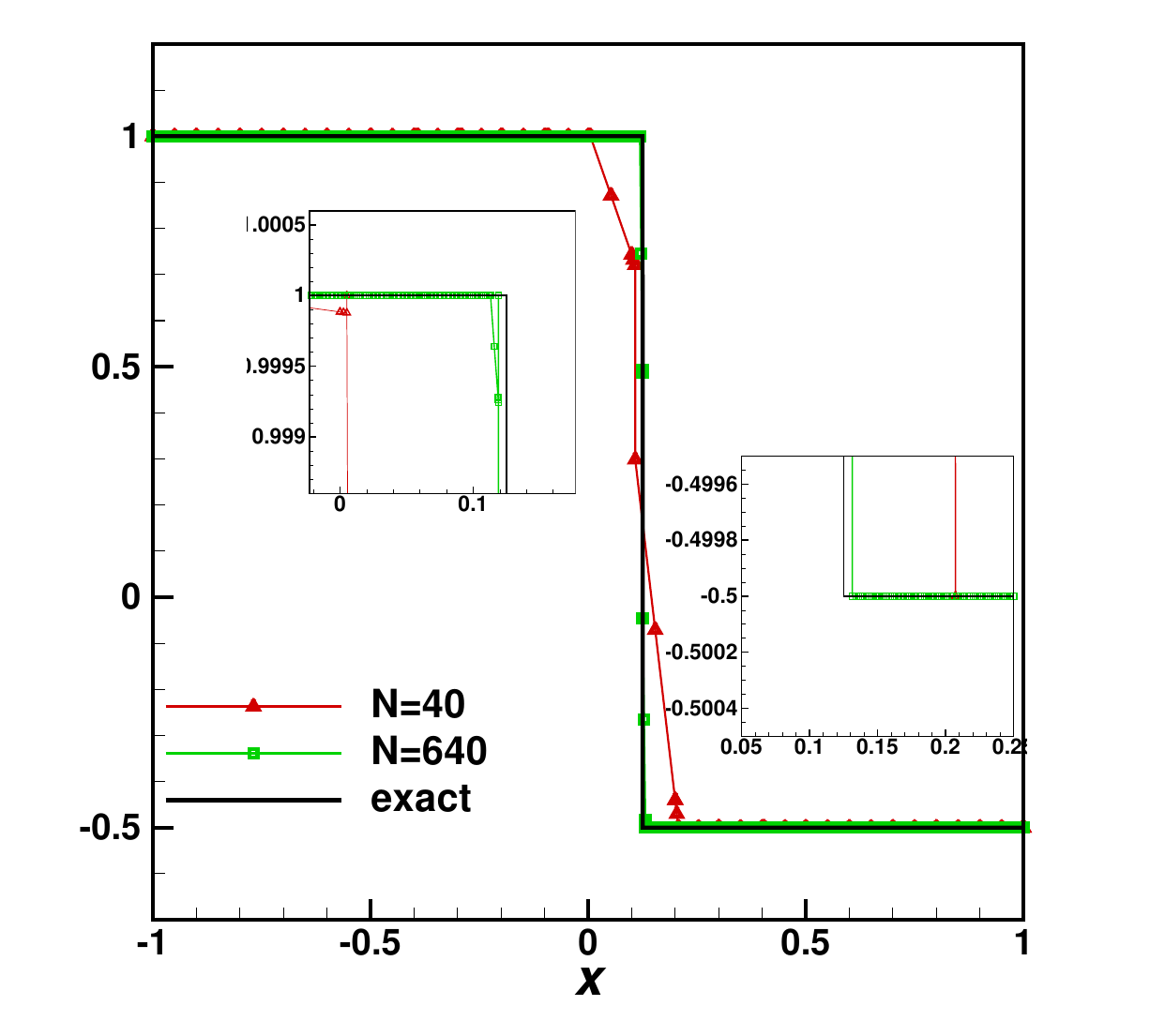}
\includegraphics[width=1.8in]{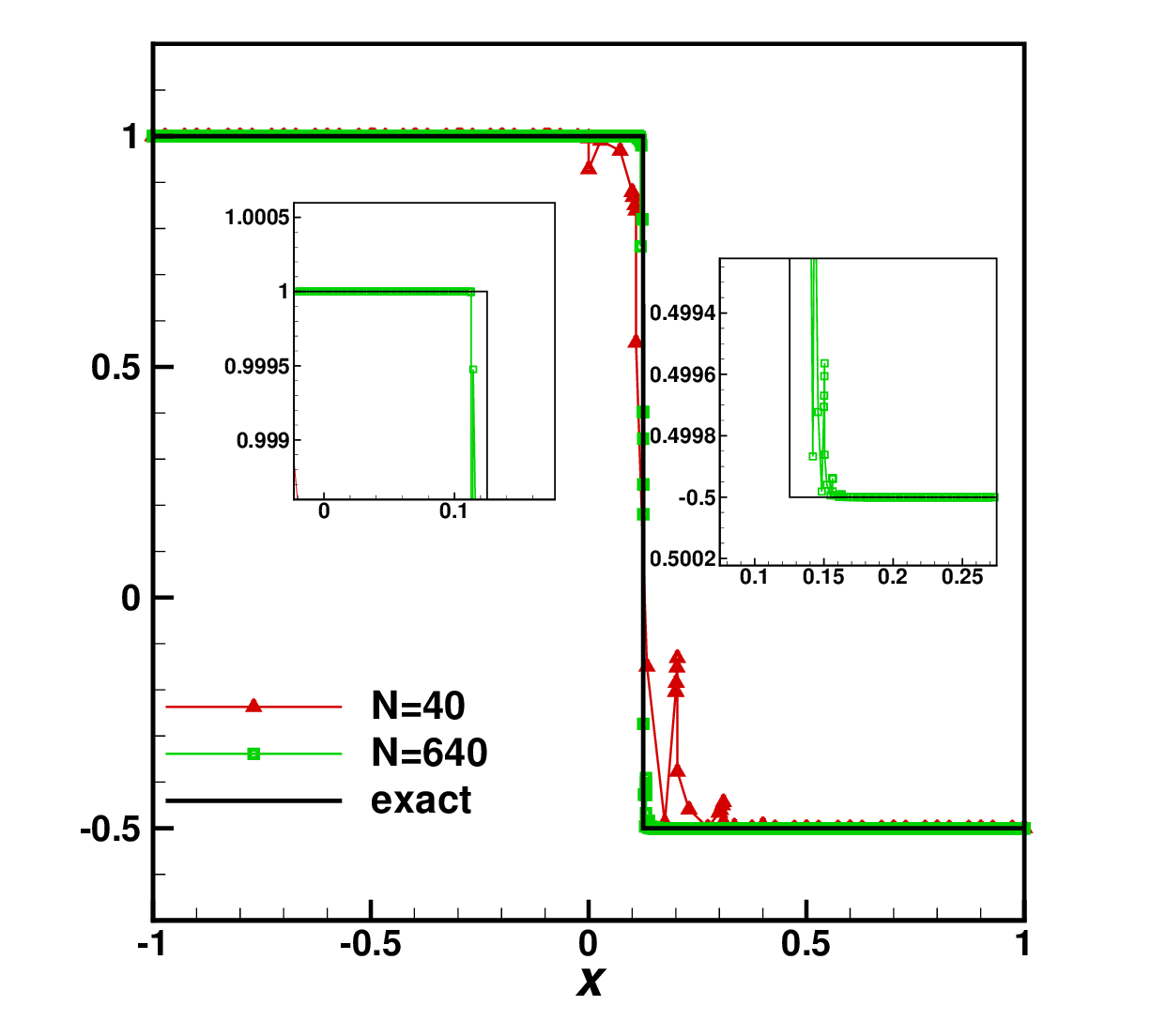}
\includegraphics[width=1.8in]{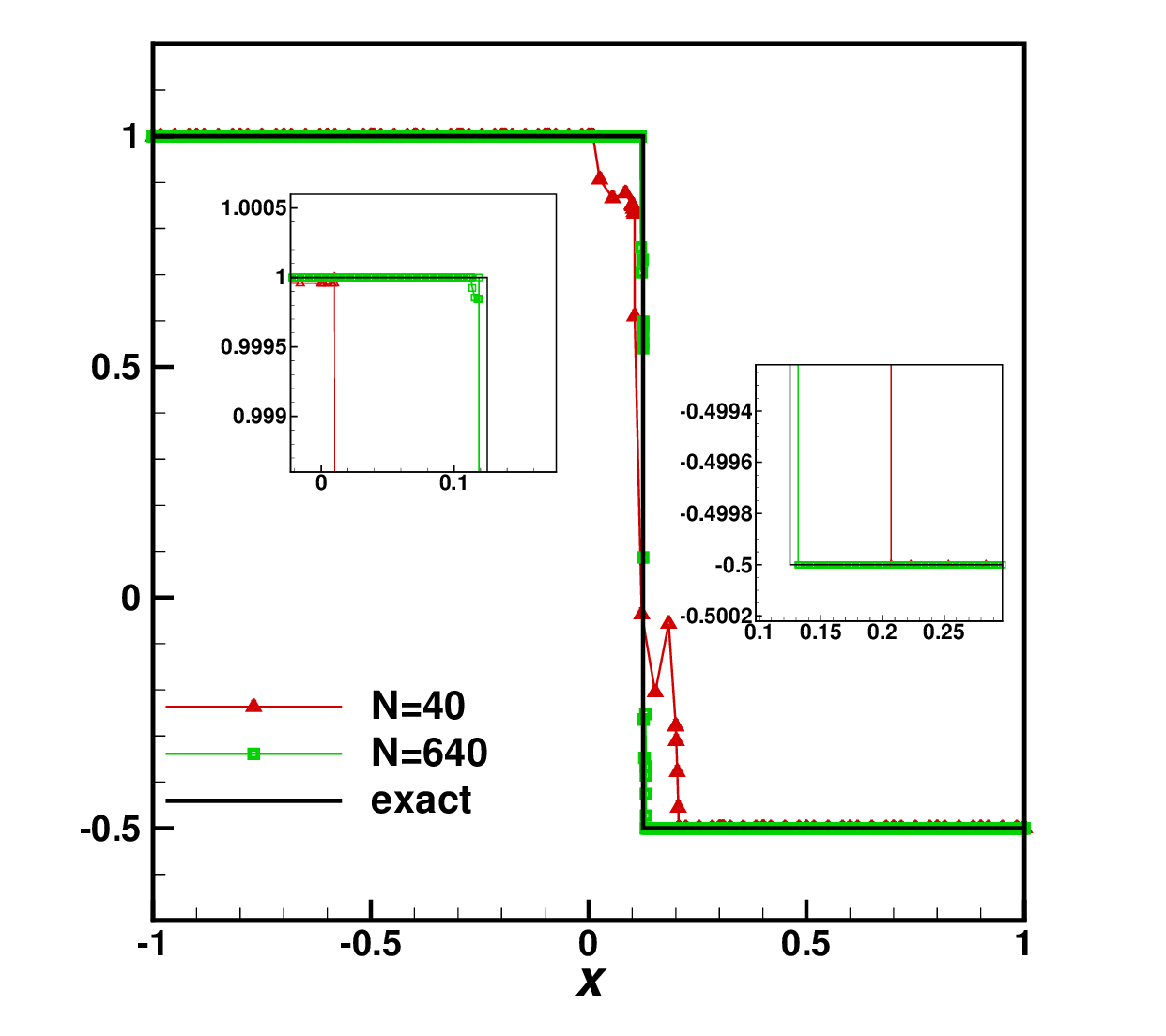}
  \caption{Numerical solutions of the Riemann problem for Burgers' equation at $t=0.5$  based on polynomial degree $p (p=1,2,3$ from left to right). Reconstruction on macro-elements and bound preserving limiter \eqref{eq:limit:uj} are used in the computations.  
  Top: a rarefaction wave. Bottom: a moving shock. }\label{figure:Rarefaction:moveshock}
\end{figure}

\subsection{Scalar case with discontinuous flux function}\label{sec:discontinuous flux}
In this subsection, we consider a hyperbolic conservation law with an interface, where  the flux function is discontinuous. Let 
\begin{align}\label{eq:def:discon:flux}
f(u)=H(x-x_\Gamma)h(u)+(1-H(x-x_\Gamma))g(u), \quad x\in[-1,1],
\end{align}
where $H(x)$ is the Heaviside function and $g(u)=u^2/2,h(u)=u$.  In the example, which is taken  from  \cite{badwaik2020convergence},  the transport equation switches to the Burgers' equation across the interface $x=x_\Gamma$. Here, we take $x_\Gamma=2 \cdot 10^{-5}$ and use a uniform mesh with $N$ elements and  $h=|\Omega|/N$. Thus, there is  a small cut cell with size $\alpha h$ with $ \alpha=N\cdot 10^{-5}$ in the interior of the  domain.  The initial condition is taken as 
\begin{align}
u(x,0)=
\begin{cases}
     0.5 & \text{$x<-0.5$ }, \\
     2 & \text{otherwise}.
\end{cases}
\end{align}
This condition is chosen such that the Rankine-Hugoniot condition at the interface $x_\Gamma$ is  $[u]_{\Gamma}=0$ before the discontinuity interacts with the interface, and $[f(u)]_\Gamma=0$ afterwards. We use our proposed bound-preserving CutDG scheme with the Lax-Friedrichs flux on the interior element edges and the upwind flux on the interface $x_\Gamma$. Thus, the conservation is satisfied.  For how to guarantee conservation at the interface we refer  to our previous work \cite{fu2022high} (here we choose $\lambda_1=0,\lambda_2=-1$).  \se{This interface treatment means that information is passed only from the left region to the right region, and that the flux at the interface is $\hat f_\Gamma=u_L$, where $u_L$ is the solution directly to the left of $x_\Gamma$. Since the solution of the  advection equation to the left is  independent of the right region, it satisfies a maximum principle with upper $\MM=2$ and lower bounds $\mm=0.5$. The first macro element to the right of the interface needs special attention. There the flux at the right end point is the Lax-Friedrichs flux based on $f(u)=u^2 /2$, while the left flux is $\hat f_\Gamma$. Analysis similar to that in $Section 3$ shows that the value in this element will satisfy the same maximum principle as to the left, since $\mm-\mm^2/2\geq 0$ and $\MM-\MM^2/2\leq 0$. For other $\mm$ and $\MM$ values in the left region the interval that bounds the solution in the right region may need to be extended. }
In Figure \ref{fig:interfacecase},  the results from a third order scheme are shown at different time instances. In this case, the discontinuity is transported to the right until it reaches the interface. When the discontinuity enters the region where  Burgers' equation holds, a rarefaction wave forms. With the proposed scheme, the solution is captured very well. Comparing the results ($h=2/64$) in \cite{badwaik2020convergence}, our results are more accurate on the coarser mesh (N=40, $h=1/20$)  and the numerical solution is both conservative and satisfies the maximum principle.  We also applied second order and fourth order schemes to solve this problem and the results are very similar. 
\begin{figure}[!htbp]
\centering
\includegraphics[width=2.0in]{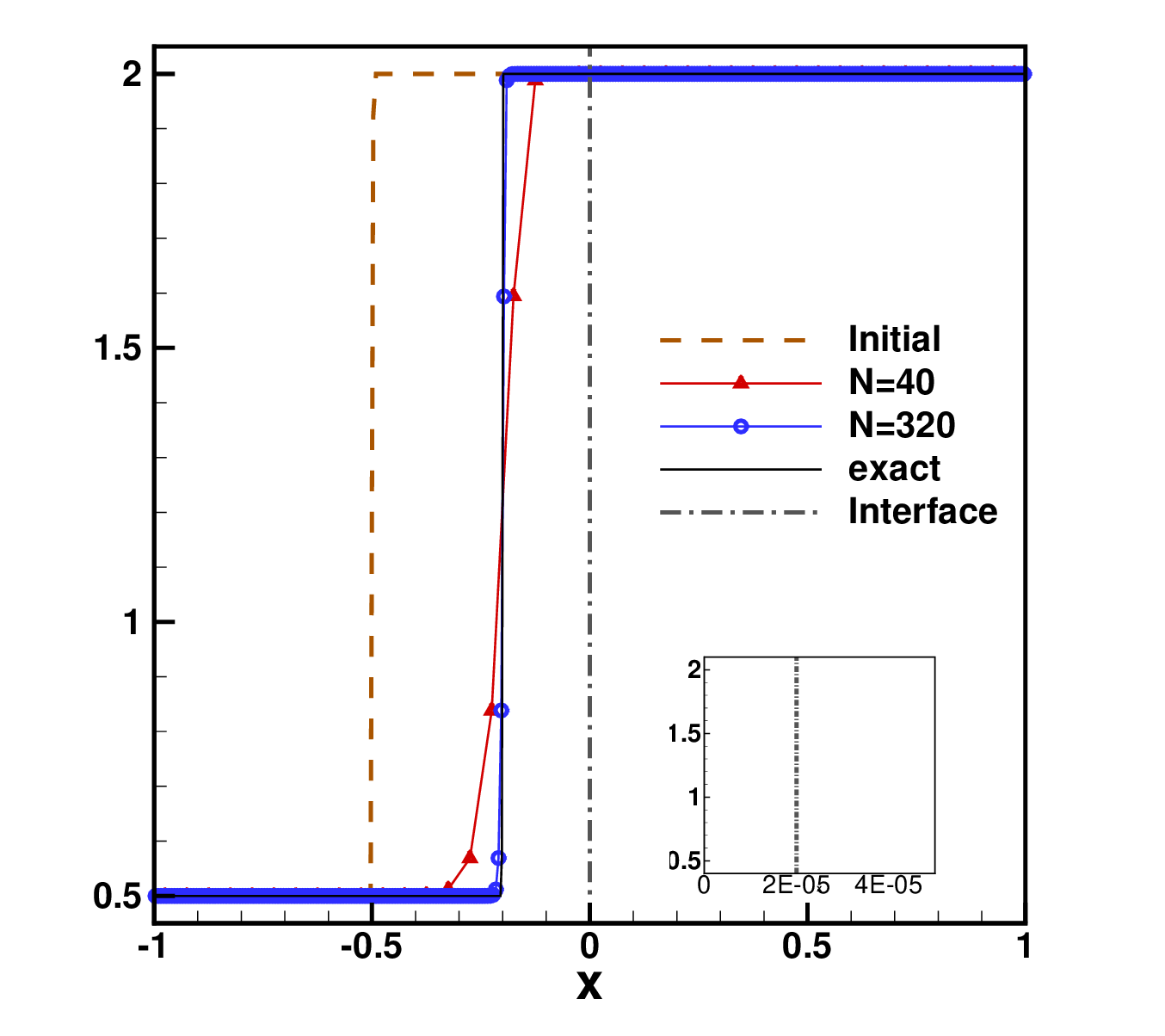}
\includegraphics[width=2.0in]{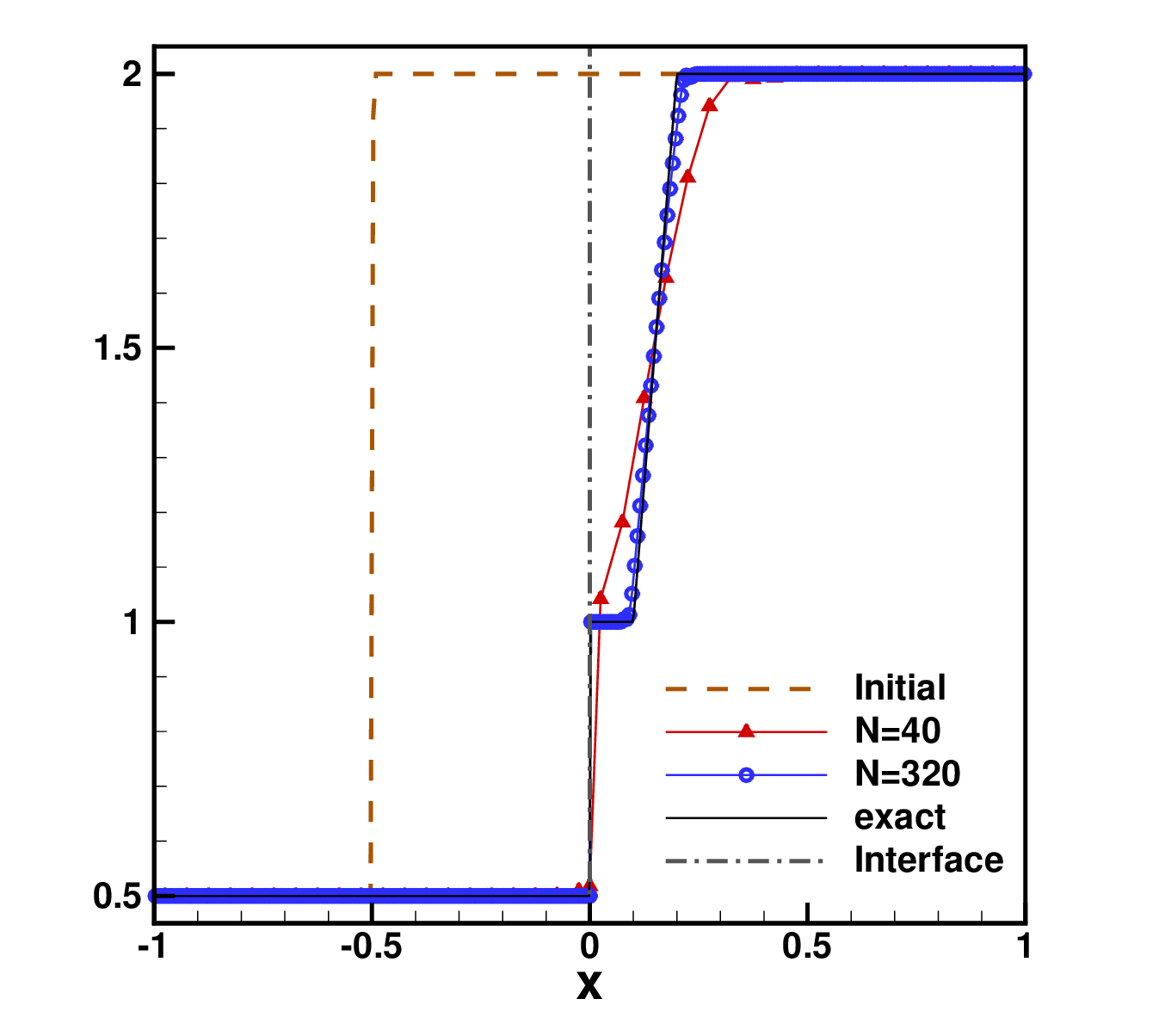}
\includegraphics[width=2.0in]{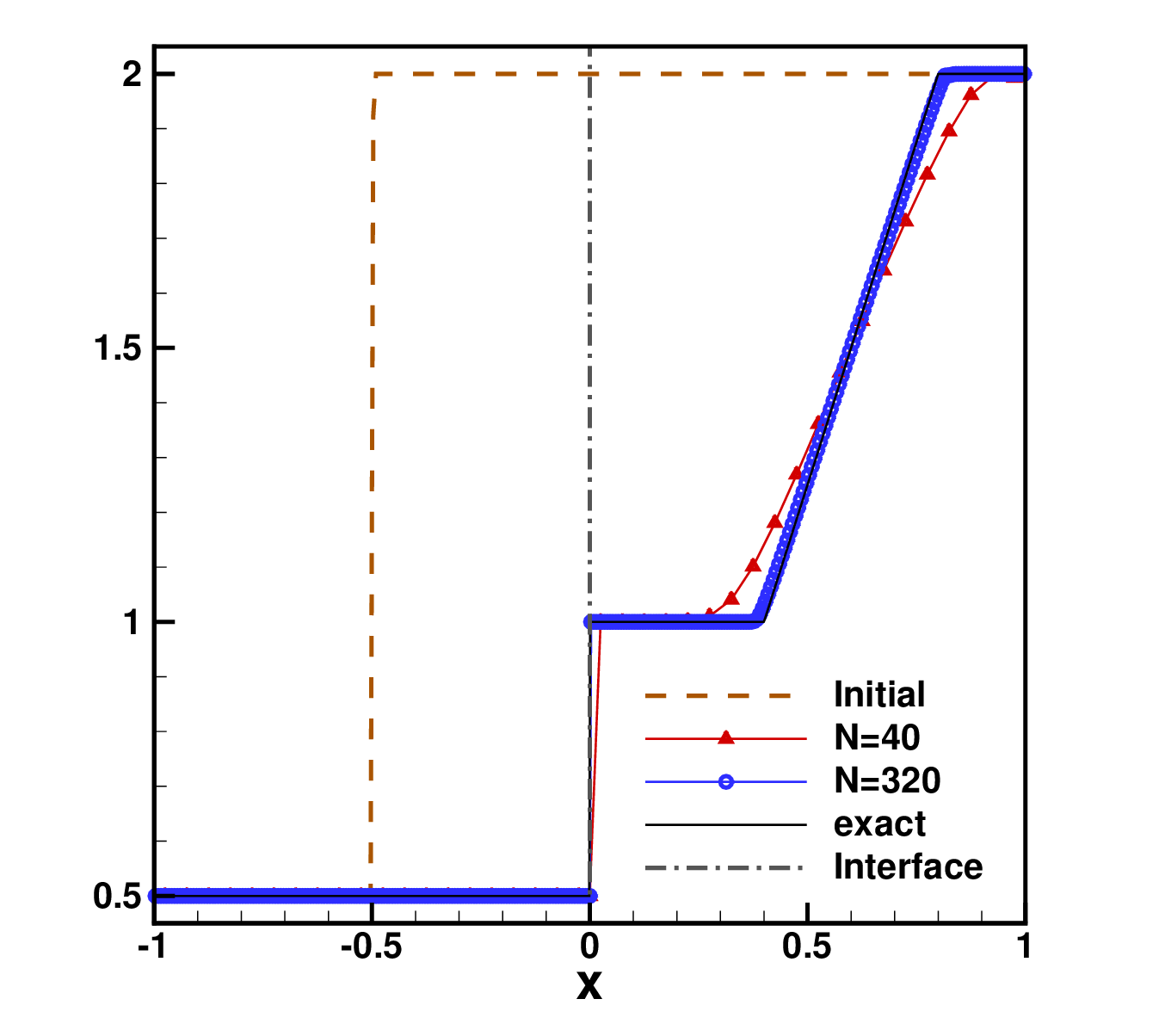}
\caption{A third order  approximation of a  scalar hyperbolic conservation law with the discontinuous flux function \eqref{eq:def:discon:flux} at different time instances.  Left: $t=0.3$, middle $t=0.6$, right $t=0.9$.}\label{fig:interfacecase}
\end{figure}

\subsection{The Euler equations}
In this subsection, we use our proposed Cut-DG method \eqref{scheme:cutDG2fully}-\eqref{eq:schemestep3}  with different piecewise  polynomials, together with the positivity preserving limiter in Section 4.3 for Euler equations \eqref{euler:eq}.  Recall that each cut element is split into two parts of length $\alpha_k$ and $1-\alpha_k$, with $\alpha_k$ defined  in \eqref{eq:define:alphak} and $\alpha=0.01$.  In the examples we consider $\gamma=1.4$, which corresponds to an ideal gas. 

\subsubsection{Accuracy test: smooth problem}
First, we consider a low density problem in one dimension. We take the initial condition as
\begin{align}
\rho_0(x)=1+0.99\sin(x),\quad u_0=1,\;p_0=1.
\end{align}
The domain is taken to be $[0,2\pi]$. The exact solution of this problem is
\begin{align}
\rho_0(x,t)=1+0.99\sin(x-t),\quad u(x,t)=1,\;p(x,t)=1.
\end{align}
Without the positivity-preserving limiter the numerical scheme may be unstable due to the negative density in long time simulation. In Figure  \ref{fig:euler:accuracy}, we show the $L^2$- and $L^\infty$-errors from our proposed bound-preserving Cut-DG methods at $t=1$.  We observe the optimal  $p+1$-th order of  accuracy for approximations of polynomial  order  $p=1,2,3$. 
\begin{figure}[tbhp]
  \centering
\includegraphics[width=2.8in]{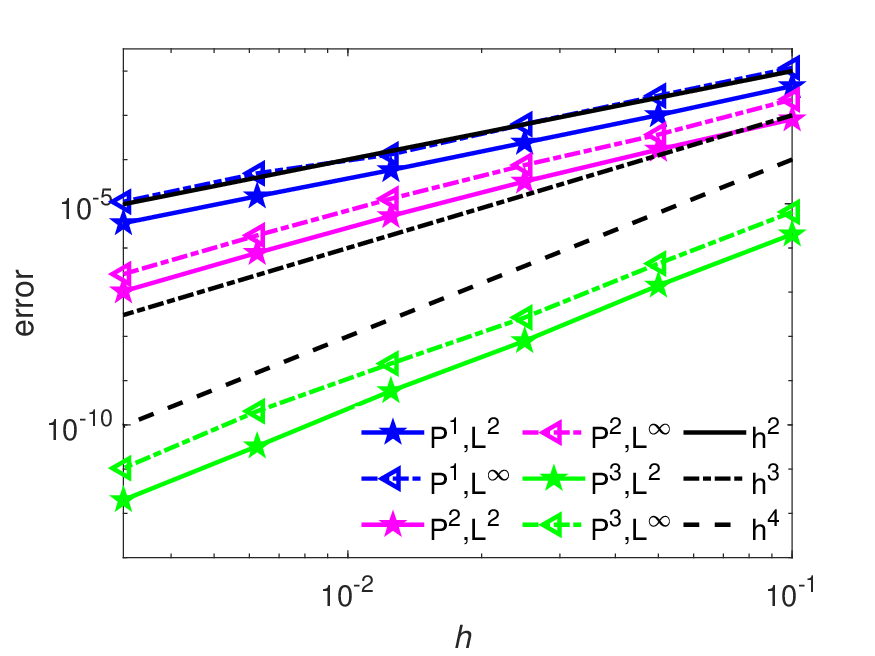}
\caption{The $L^2$- and $L^\infty$-errors for the numerical solutions of the low density problem in Section 5.3.1 from our proposed bound-preserving  Cut-DG method.
 }\label{fig:euler:accuracy}
\end{figure}

\subsubsection{Riemann problems}
In this subsection, we consider two Riemann problems. We use the proposed bound preserving  Cut-DG method \eqref{scheme:cutDG2fully}-\eqref{eq:schemestep3}  with spaces $P^2,P^3$.

First, we consider the Sod shock tube problem in the domain $[0,1]$.   The initial condition is taken as 
\begin{align*}
(\rho_L, u_L, p_L) = (1,0,1), \, x<0.5, \quad (\rho_R, u_R, p_R) = (0.125,0,0.1), \, x>0.5.
\end{align*}
 We solve this problem up to $t=0.2$. 
The exact solution of Sod shock tube problem contains a rarefaction wave, a contact discontinuity and a shock discontinuity.  Figure \ref{fig:euler:sod} shows the  approximations of density (left), velocity (middle) and pressure (right) from our  proposed bound-preserving  Cut-DG scheme.  In this case,  TVD limiting  is needed to control oscillations. This limiter is applied  to the reconstructed solutions on macro-elements before the positivity-preserving limiter is applied.  
We observe that our proposed Cut-DG scheme can simulate this Riemann problem well and can capture the rarefaction wave and discontinuities.  
Just as for the standard DG method \cite{ShuDG3} on a fitted mesh, when the TVD limiter is applied to the  variables component-wise, we see some numerical artifacts in form of oscillations, which decrease with mesh refinement.
\begin{figure}[!htbp]
\centering
\includegraphics[width=2.1in]
{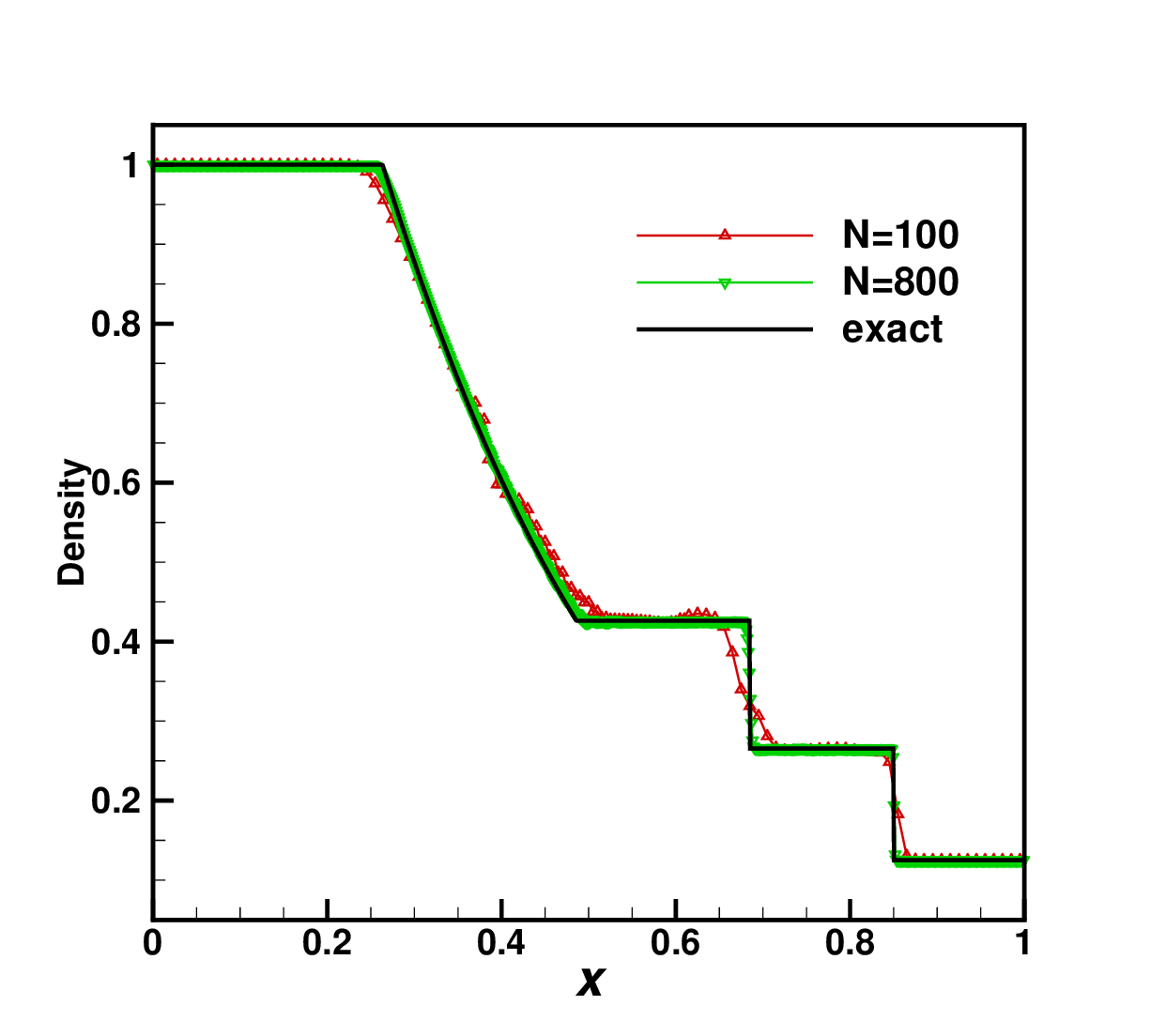}
\includegraphics[width=2.1in]
{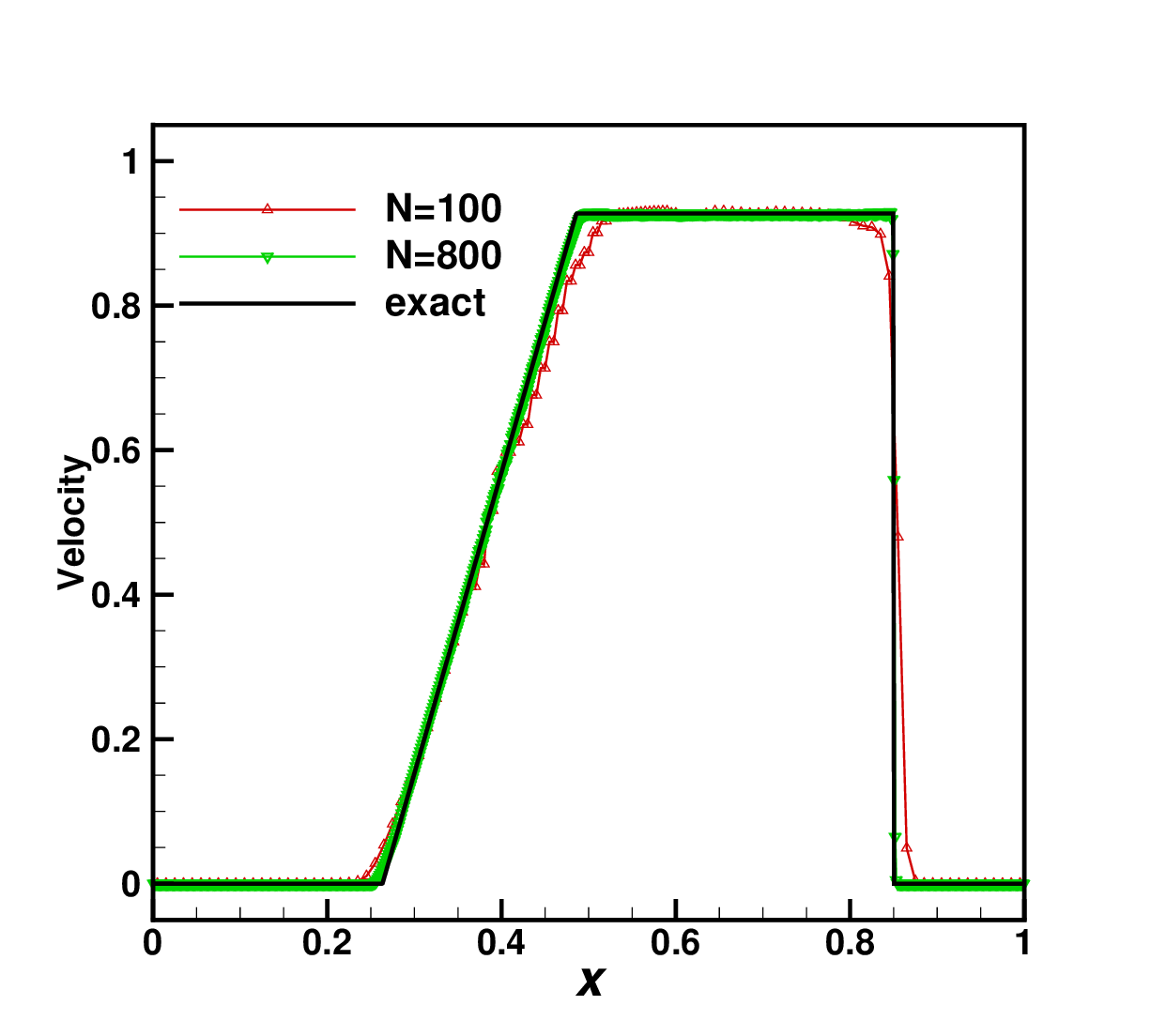}
\includegraphics[width=2.1in]{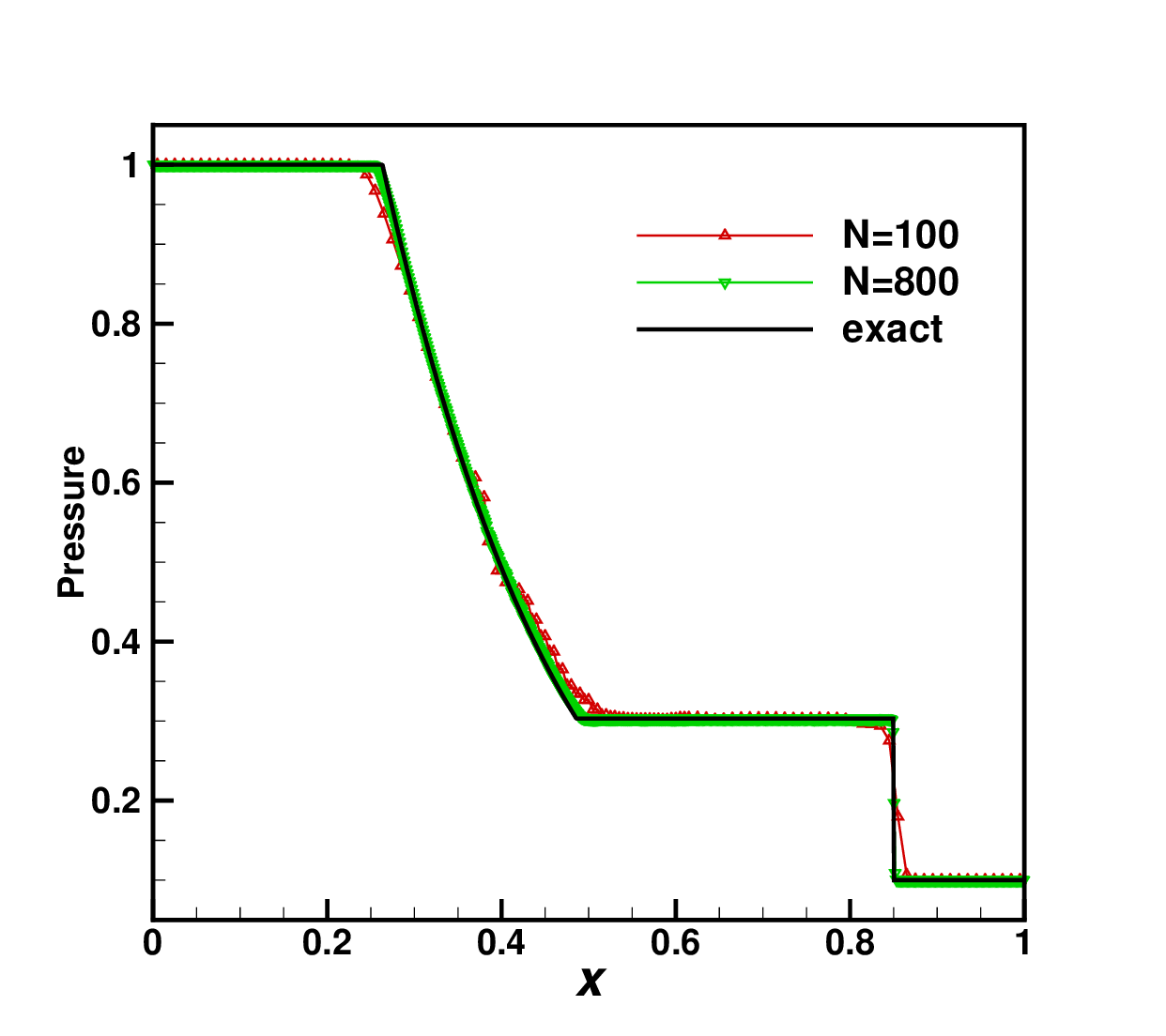}
\includegraphics[width=2.1in]{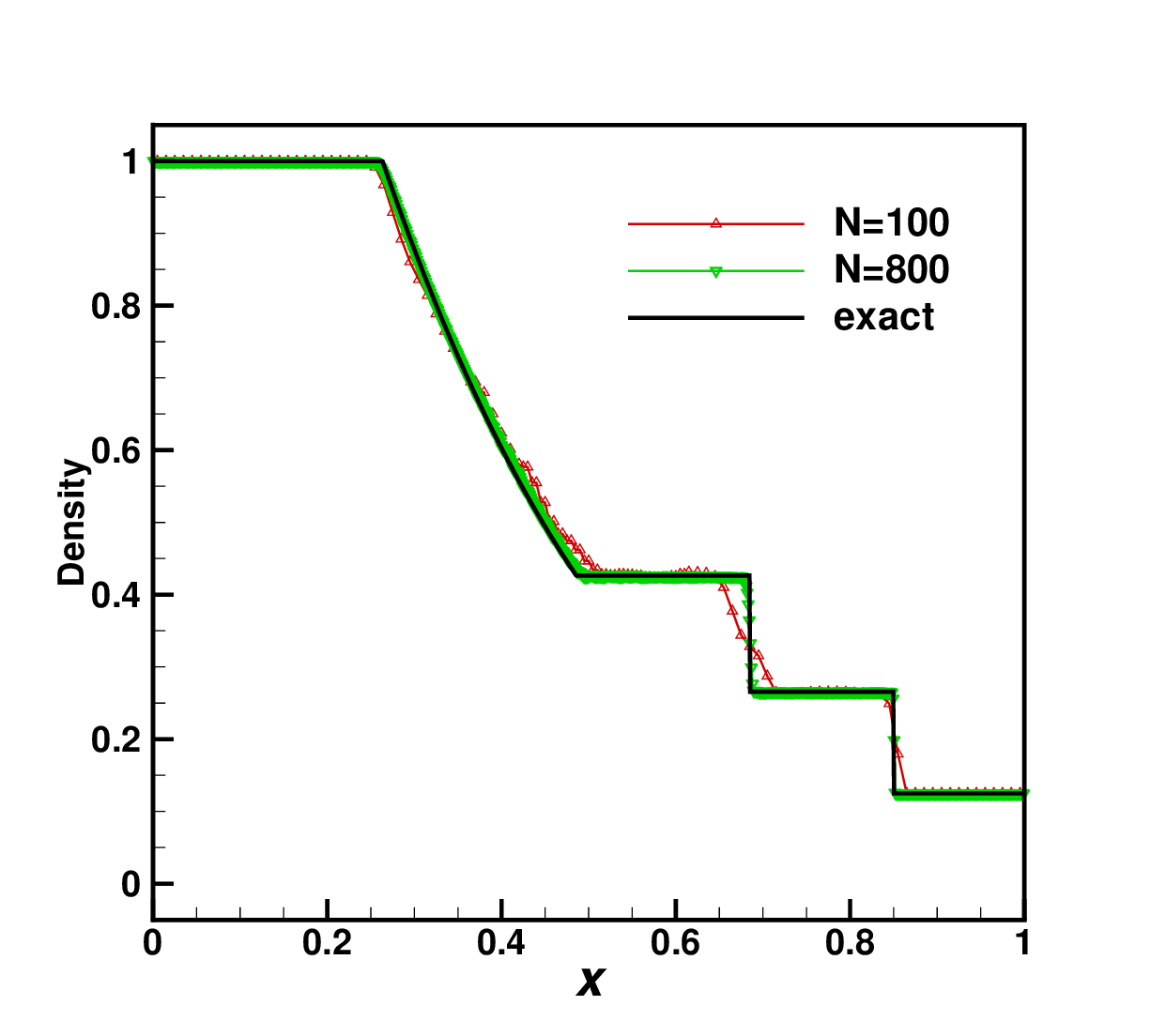}
\includegraphics[width=2.1in]{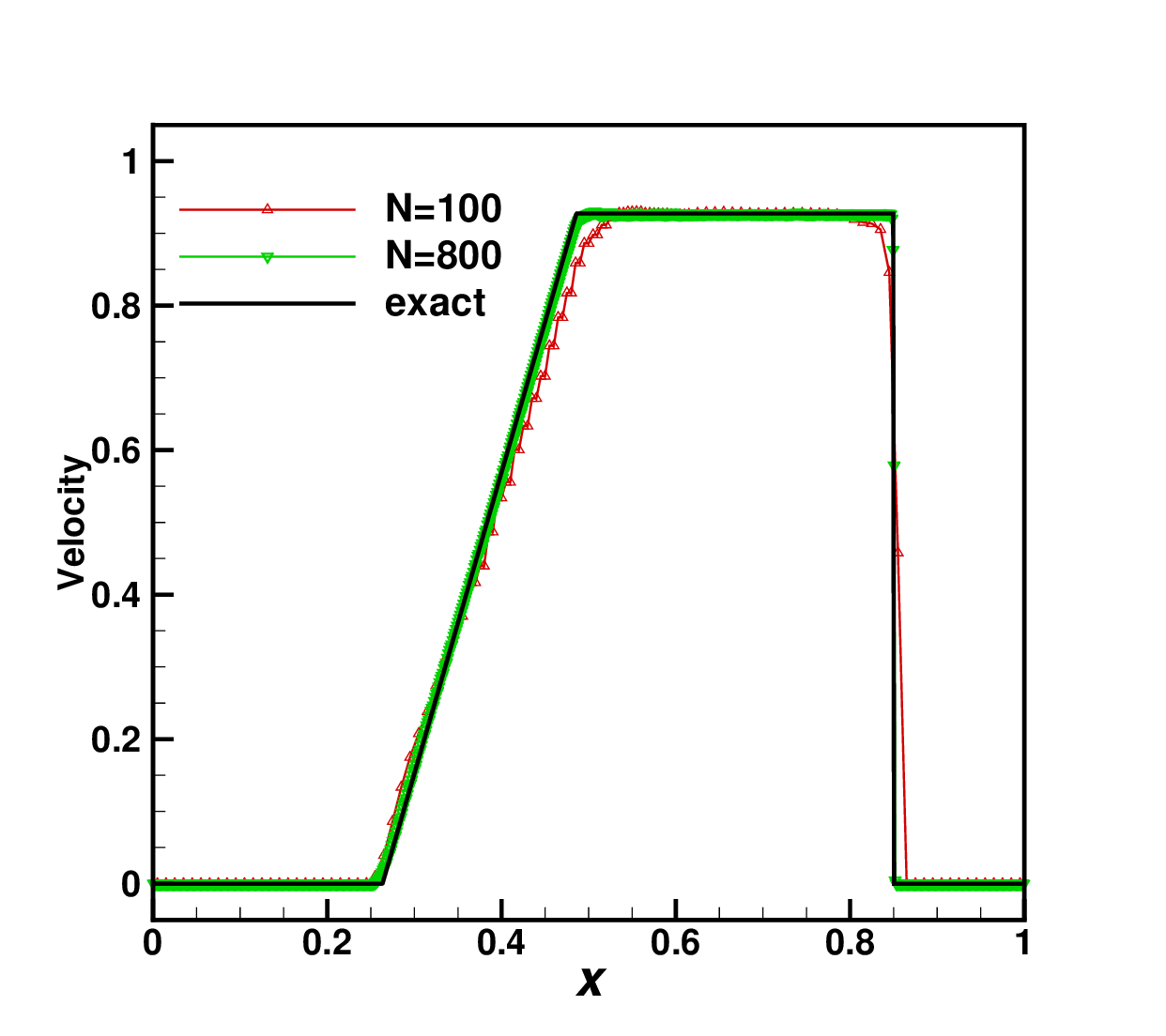}
\includegraphics[width=2.1in]{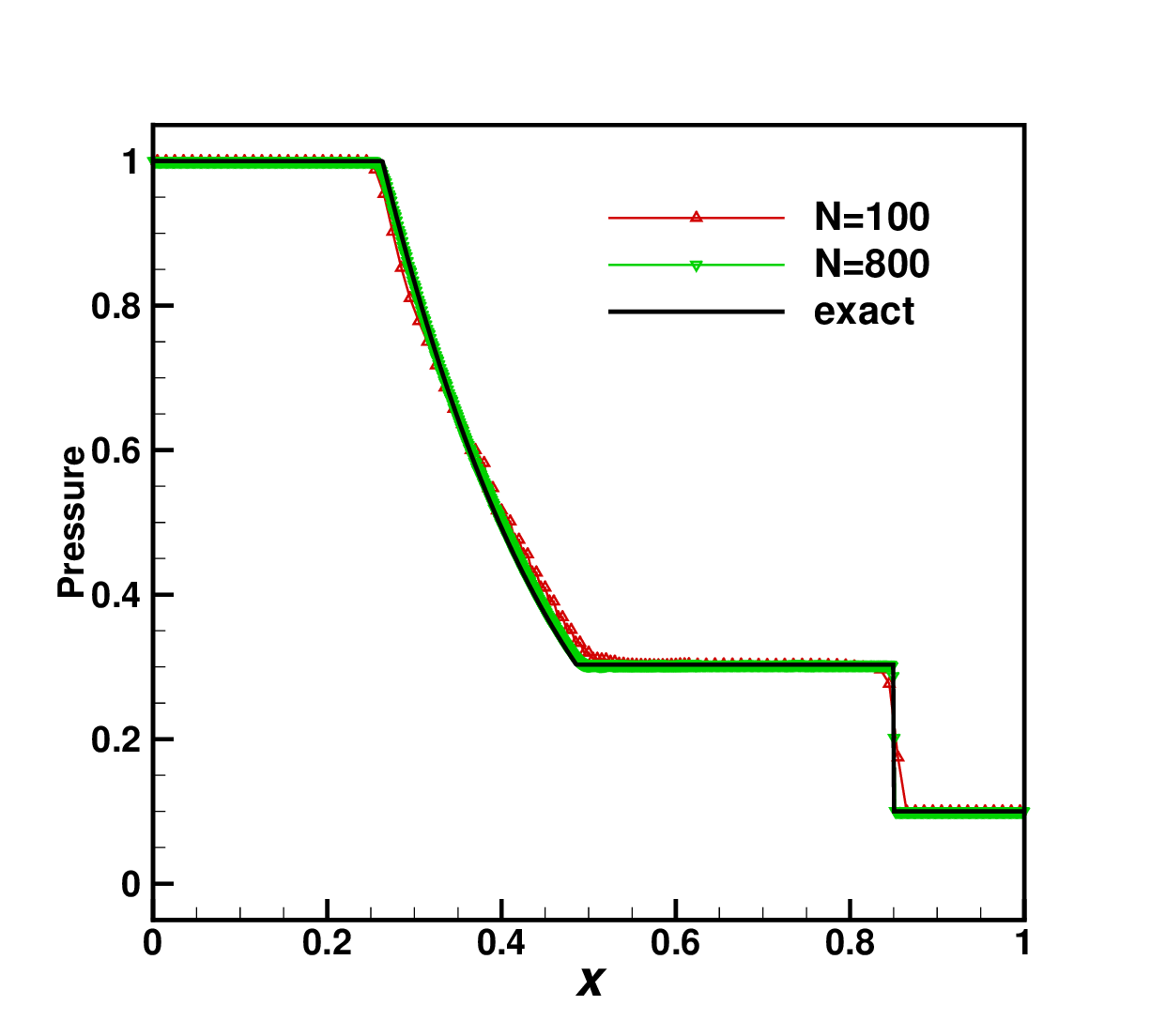}
\caption{ The numerical solutions from the proposed Cut-DG scheme for Sod shock tube problem on cut meshes. Top: $P^2$ approximations. Bottom:  $P^3$ approximations.}\label{fig:euler:sod}
\end{figure}

The second Riemann problem is a double rarefaction wave  in the domain $[-1,1]$ as in \cite{linde1997robust}. The initial condition is
\begin{align*}
(\rho_L, u_L, p_L) = (7,-1,0.2), \ x<0, \quad (\rho_R, u_R, p_R) = (7,1,0.2), \ x>0,
\end{align*}
and outflow boundary conditions are used at both boundaries.   There is no shock in this problem. Thus, only the positivity-preserving limiter is used for this problem. Notice that the exact solution  contains almost a vacuum. In the middle and right panels  of Figure \ref{exe:RP1:1d} we show the results from our proposed Cut-DG method for Euler equations.   The termination time is $t= 0.6$. In the left panel of Figure \ref{exe:RP1:1d}, we give the approximations of  $\rho,u,p$ from the standard DG method on  uniform meshes with $800$ elements. We  observe that the results on  cut meshes are practically similar to those on uniform meshes. For this Riemann problem computations are unstable without the  positivity preserving limiter. 
\begin{figure}[!htbp]
\centering
\includegraphics[width=2.1in]{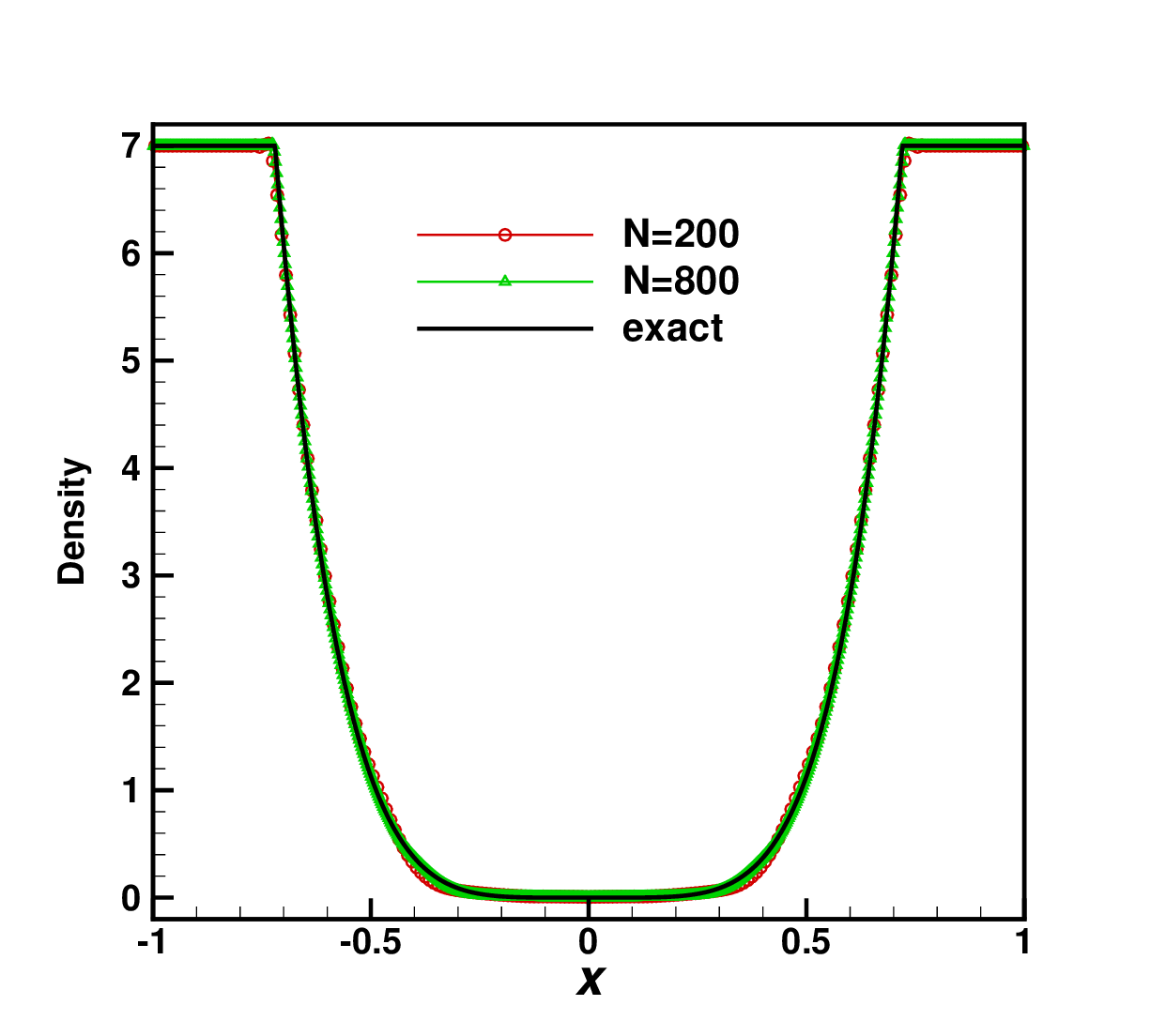}
\includegraphics[width=2.1in]{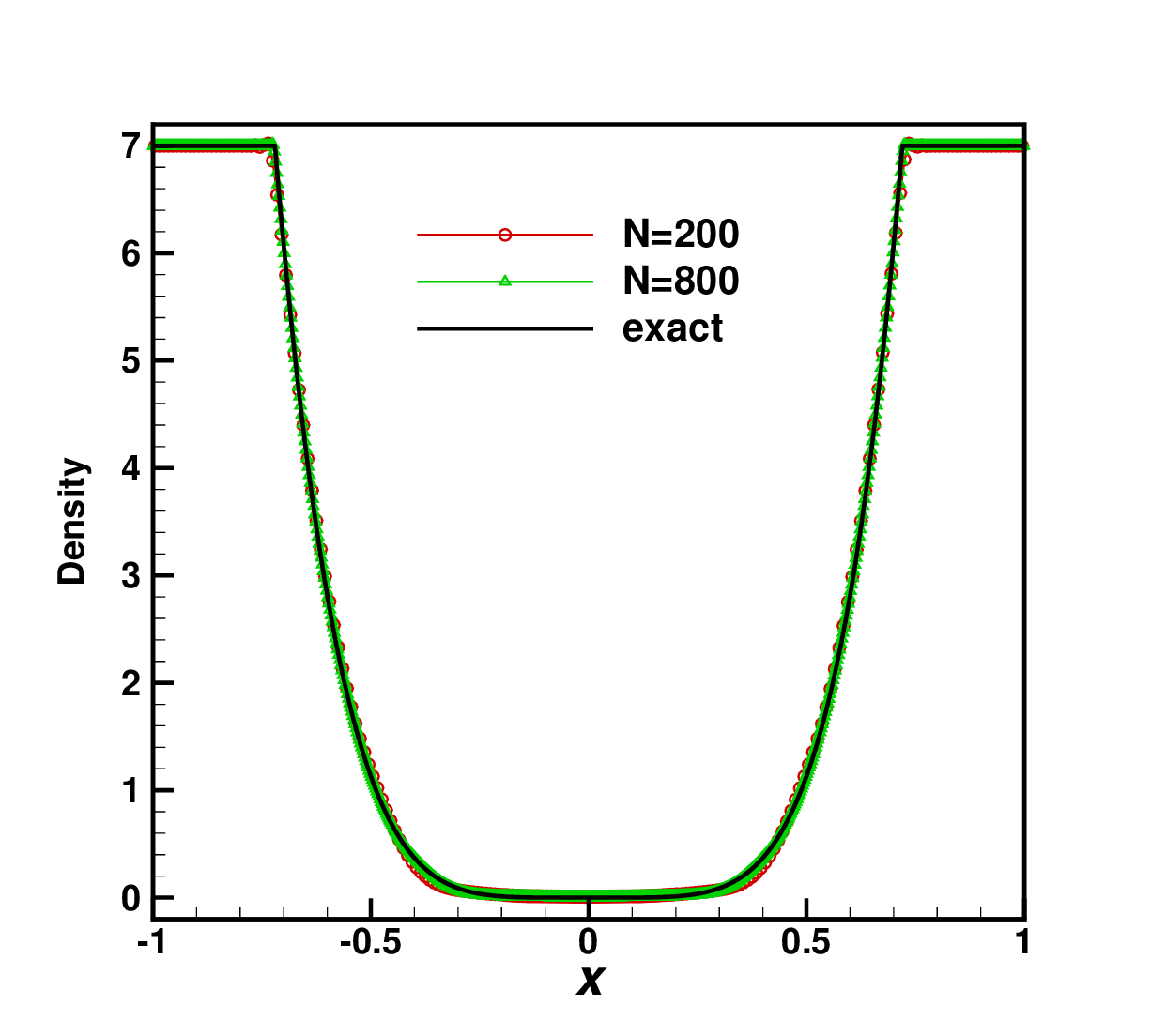}
\includegraphics[width=2.1in]{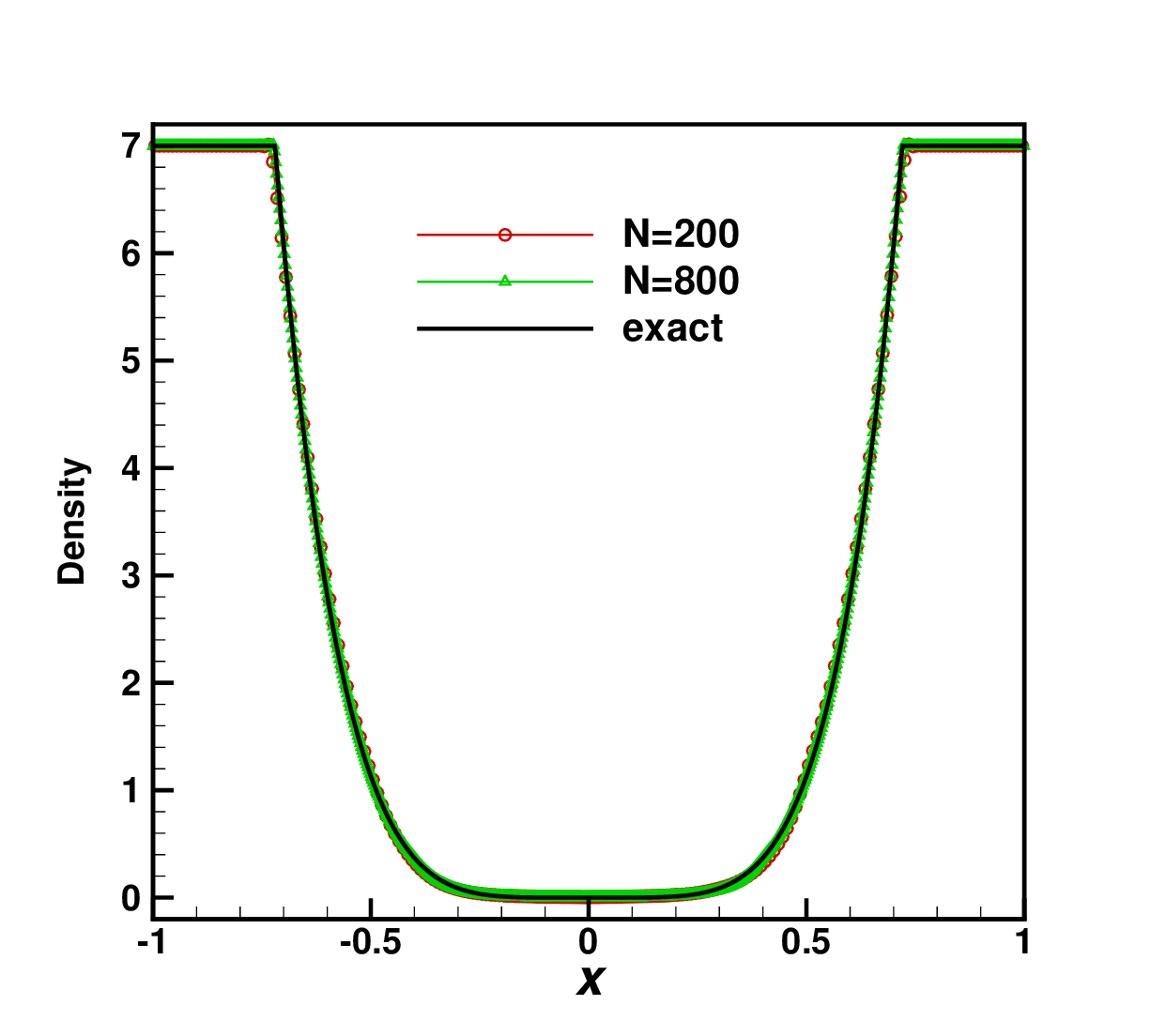}
\includegraphics[width=2.1in]{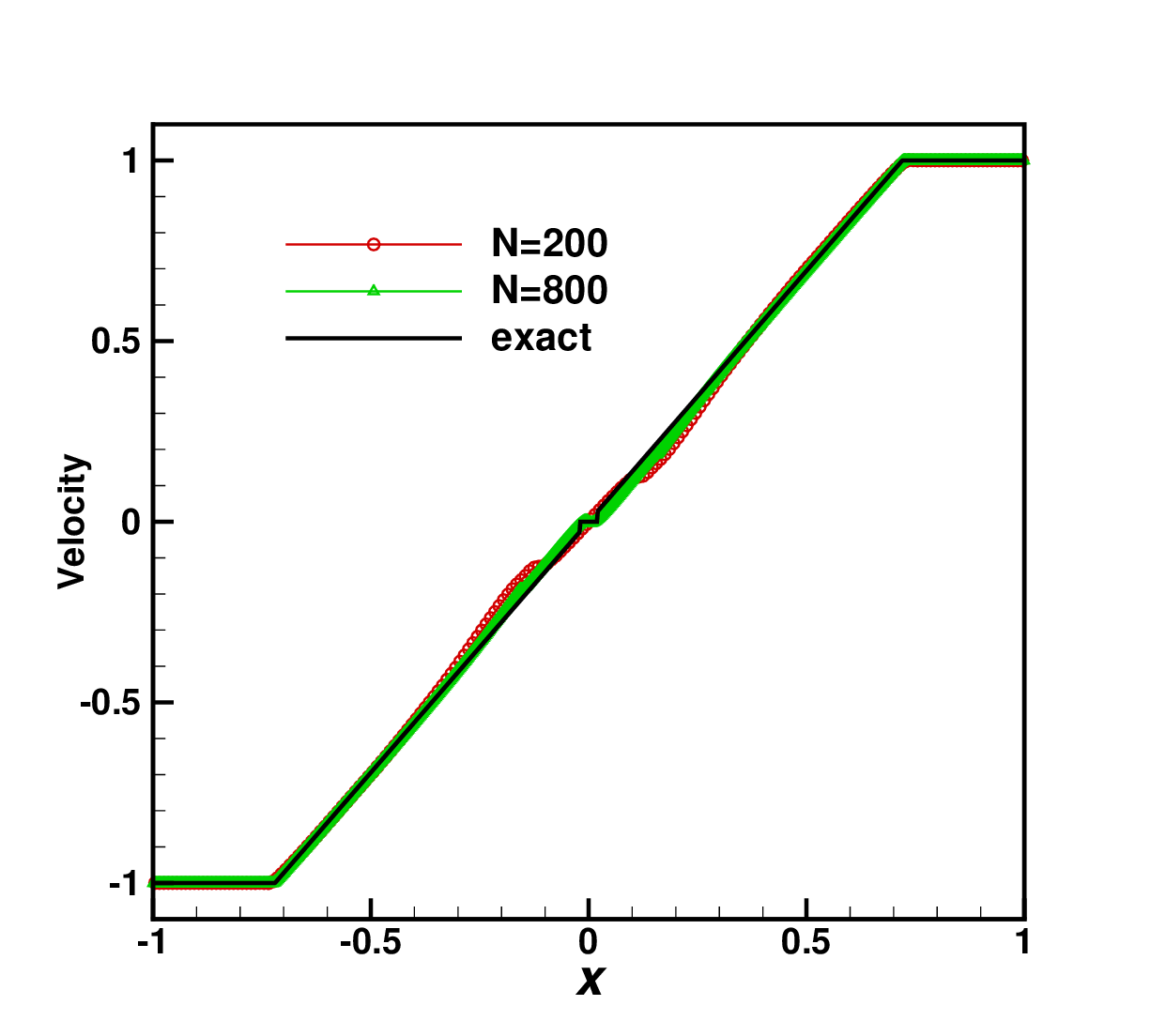}
\includegraphics[width=2.1in]{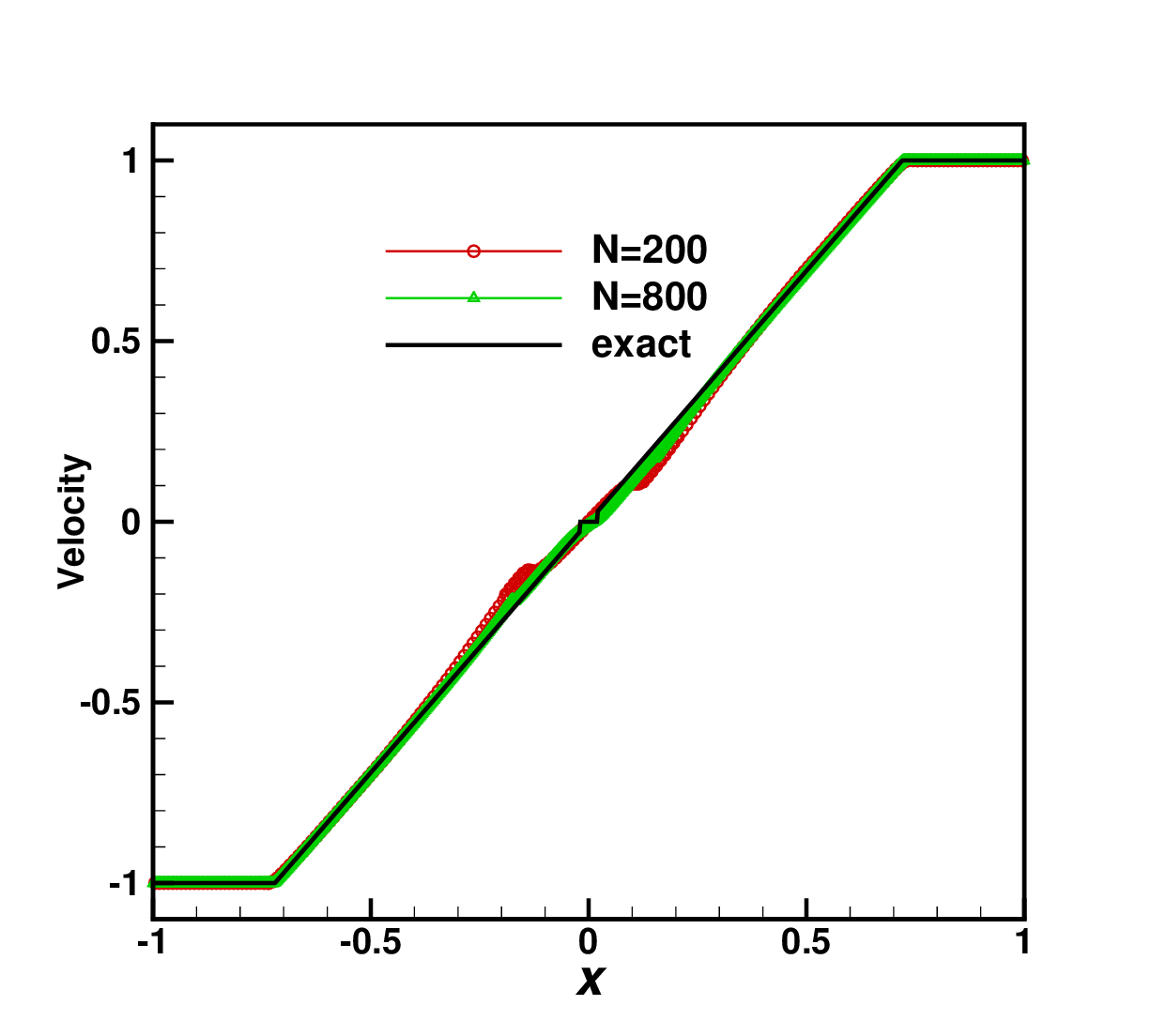}
\includegraphics[width=2.1in]{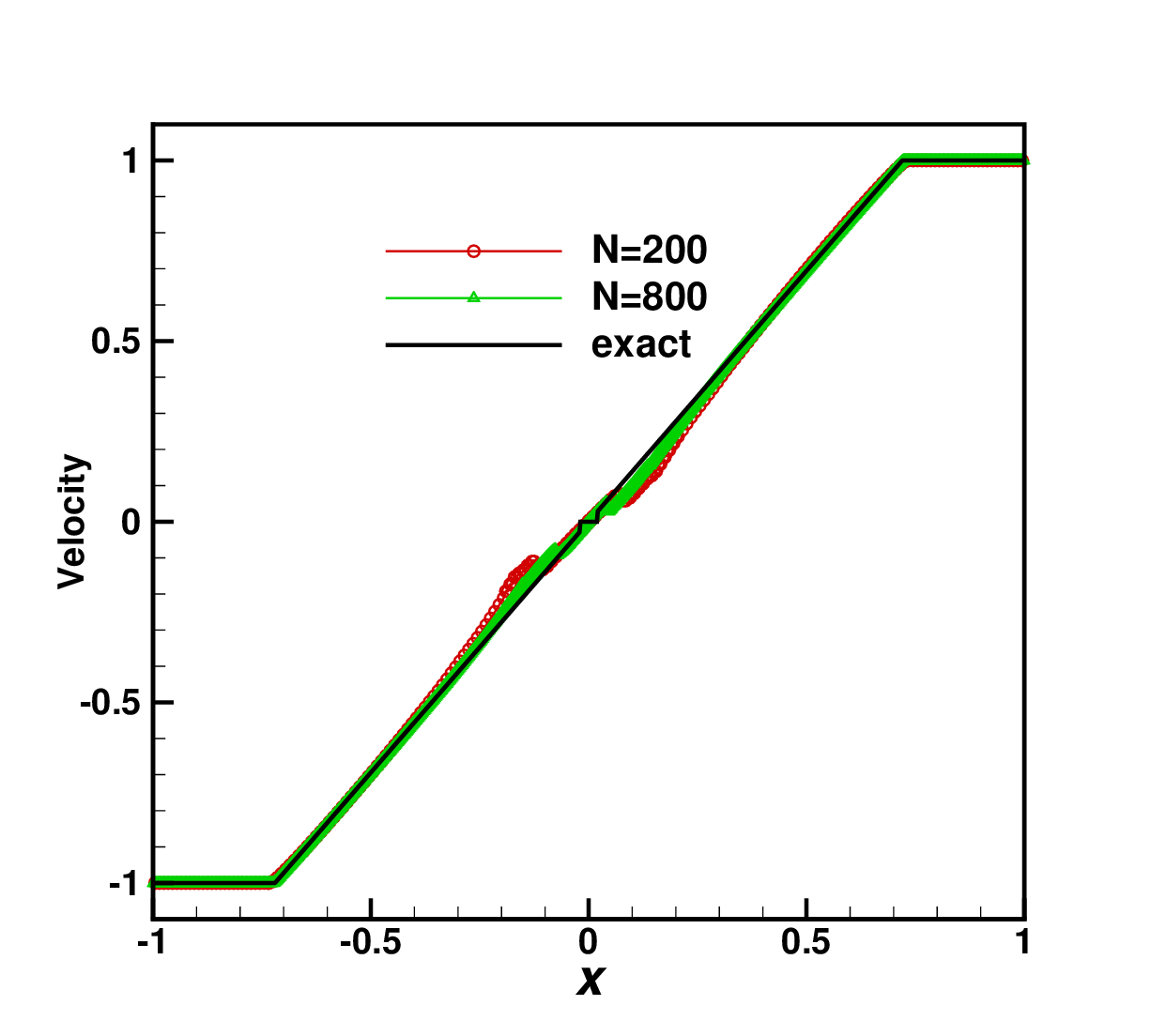}
\subfigure[DG, $P^2$]{
\includegraphics[width=2.05in]{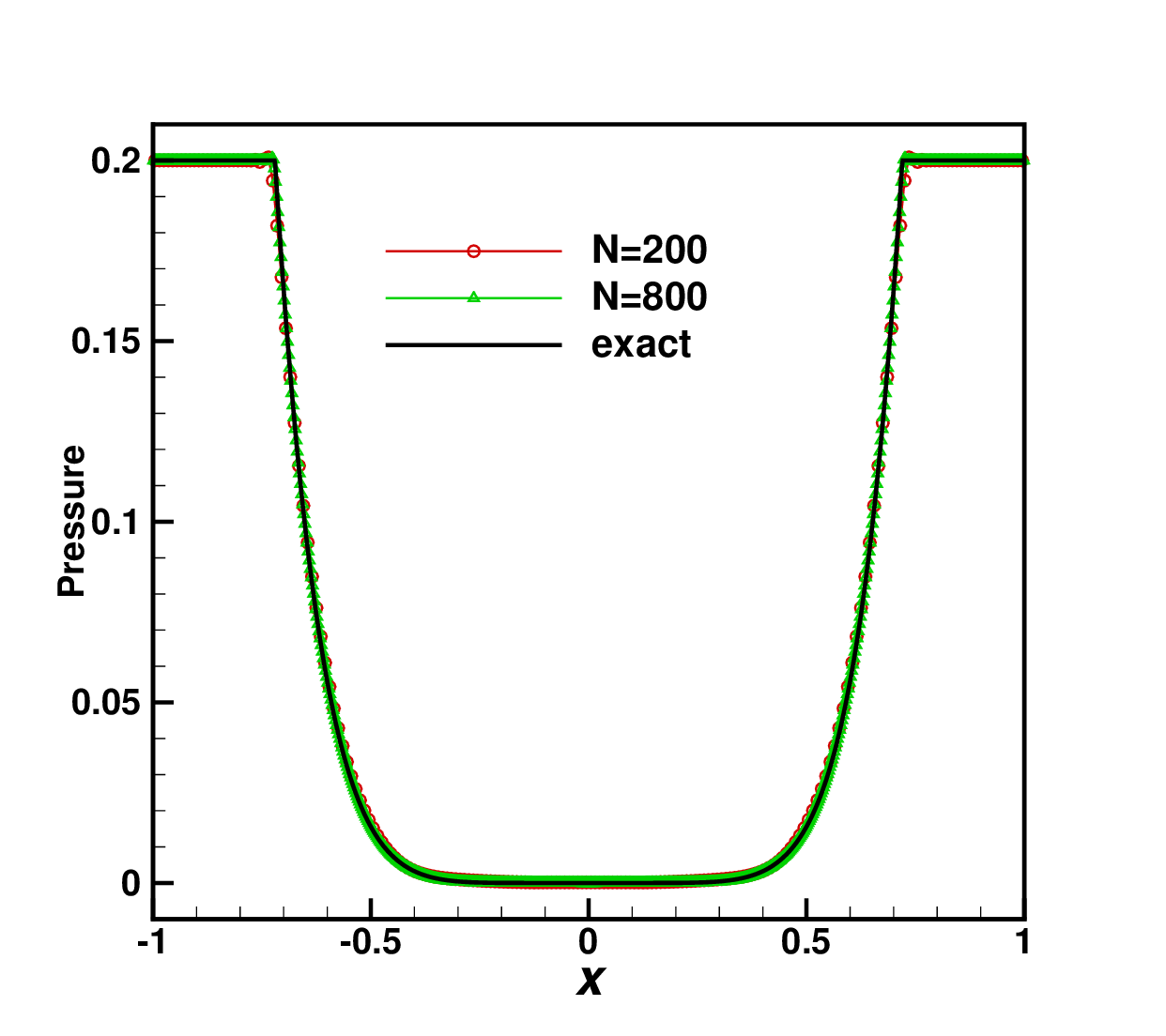}}
\subfigure[CutDG, $P^2$]{
\includegraphics[width=2.05in]{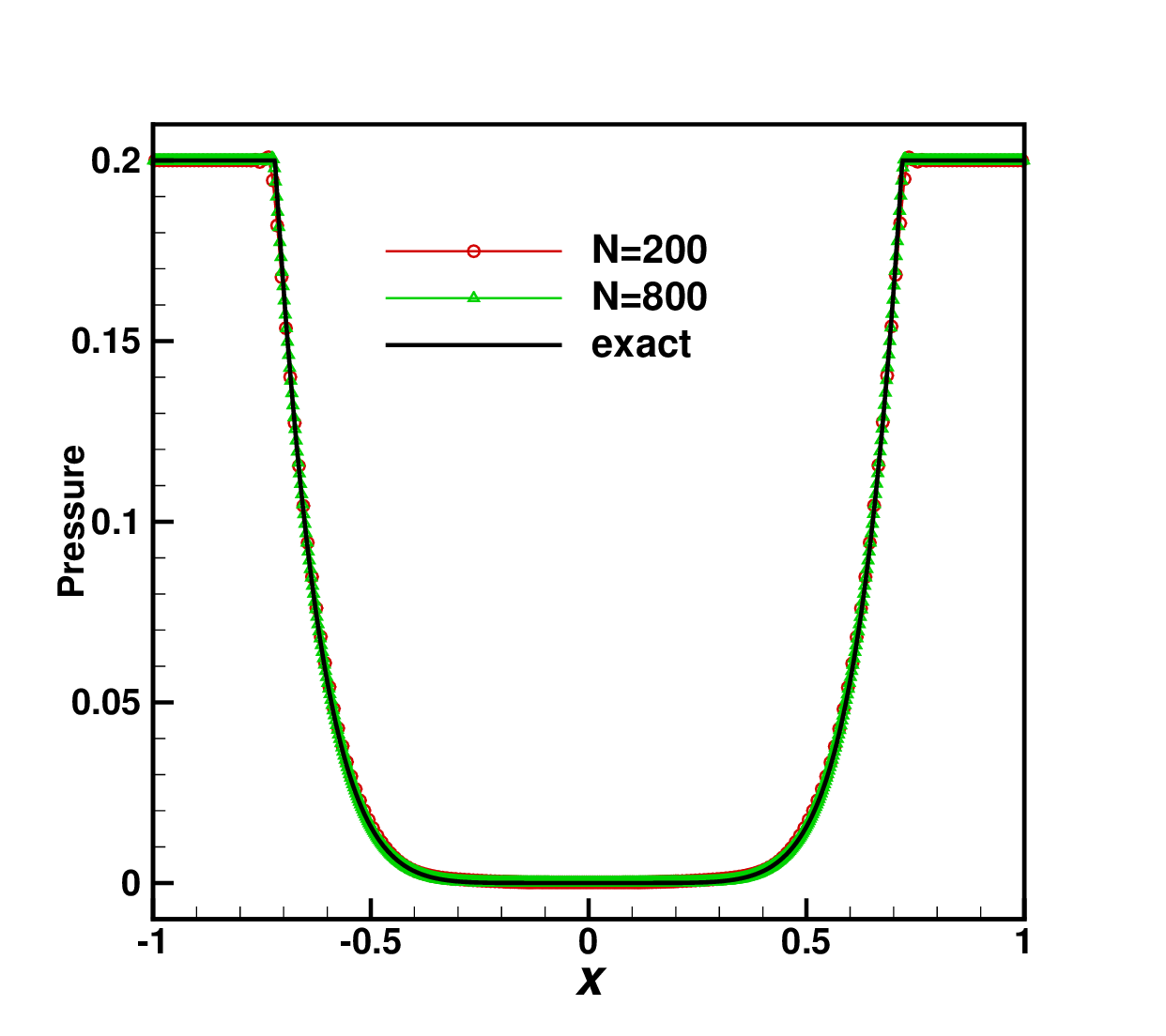}}
\subfigure[CutDG, $P^3$]{
\includegraphics[width=2.05in]{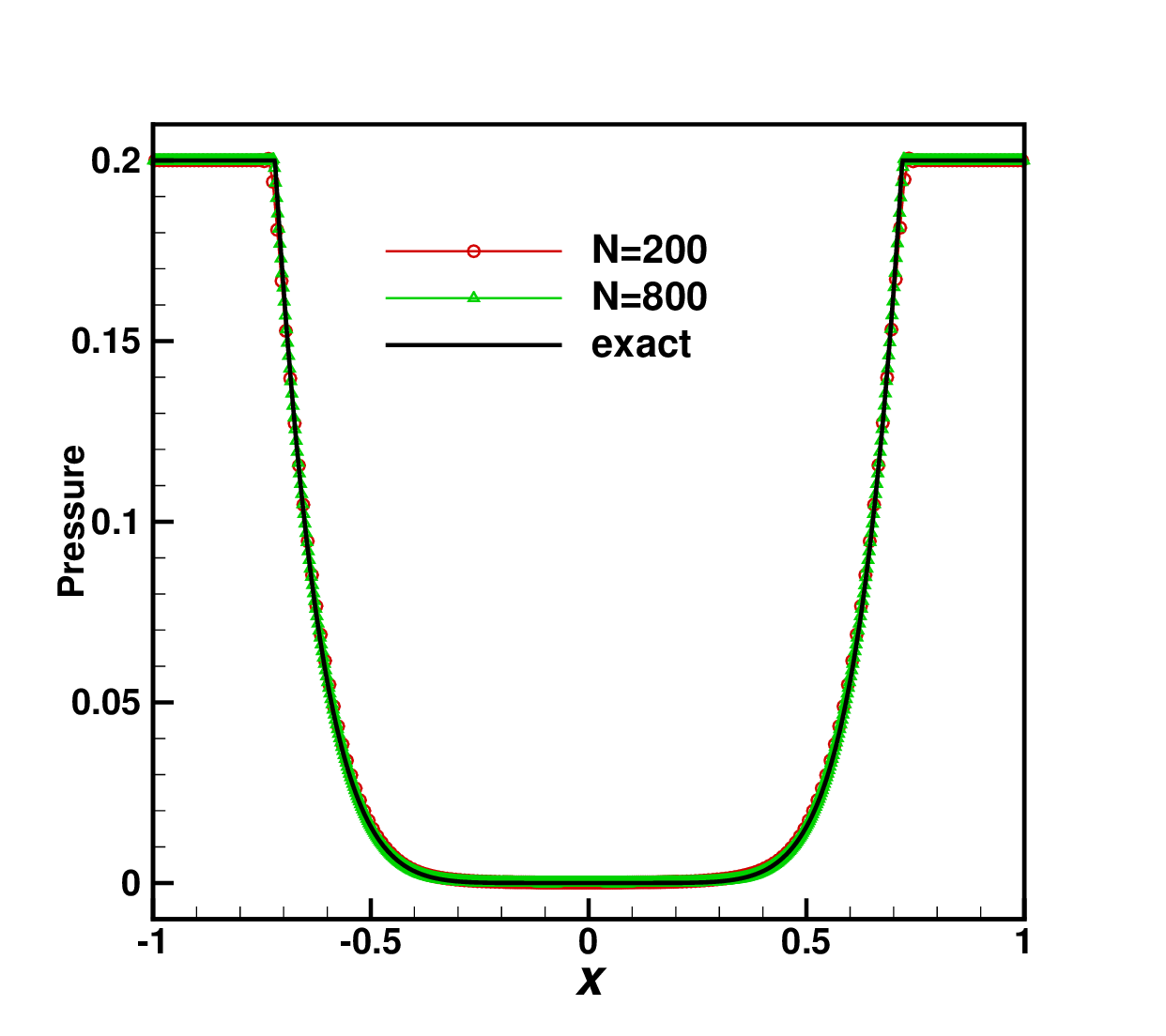}}
\caption{  DG and CutDG approximations  for the double rarefaction problem  on the uniform mesh (left) and cut (middle, right) meshes. Solid line: exact solution. Symbol: approximations.}\label{exe:RP1:1d}
\end{figure}

\subsubsection{Sedov blast wave problem}
We now test the Sedov blast wave problem, a typical low-density problem with strong shocks. The exact solution can be found in \cite{korobeinikov1991problems,sedov1993similarity}.  The computational domain is $[-2,2]$, and we use  $N=200,800$ elements on the background mesh.   We take the initial condition as
\begin{align*}
(\rho,u,E)=\left\{ 
    \begin{array}{lc}
         (1,0,3.2\cdot 10^6)& x \in [0,h],  \\
        (1,0,10^{-12})&\text{otherwise,}\\
    \end{array}
\right.
\end{align*}
where $h=4/N$. 
The elements in $[-0.5,0.5]$ are cut with $\alpha=0.01$. 
We simulate this problem up to time $t=0.001$ with $P^2$ approximations by our proposed scheme. We apply a  TVD limiter to the reconstructed approximation on the macro-elements and the positivity preserving limiter is applied to modify the limited approximation to make sure the solution has  positive density and pressure.  In the top panel of Figure \ref {exe:sedovblast:1d:macroonlybad}, we observe that our proposed Cut-DG method \eqref{scheme:cutDG2fully}-\eqref{eq:schemestep3}  together with the positivity preserving limiter in Section 4.3 can simulate this blast wave problem very well. To minimize errors and reduce computational costs, we also simulated this problem using the scheme \eqref{scheme:cutDG2fully}. The TVD limiter was applied to the unreconstructed solution on the macro-elements. Then, we reconstructed the solution on those macro-elements where the numerical solution had negative density or pressure, and a positivity-preserving limiter was applied to the reconstruction on the macro-elements. The results are shown in the bottom panel of Figure \ref{exe:sedovblast:1d:macroonlybad}, which demonstrate that the locally applied reconstruction can also simulate this problem effectively.
\begin{figure}[!htbp]
\centering
\includegraphics[width=2.0in]{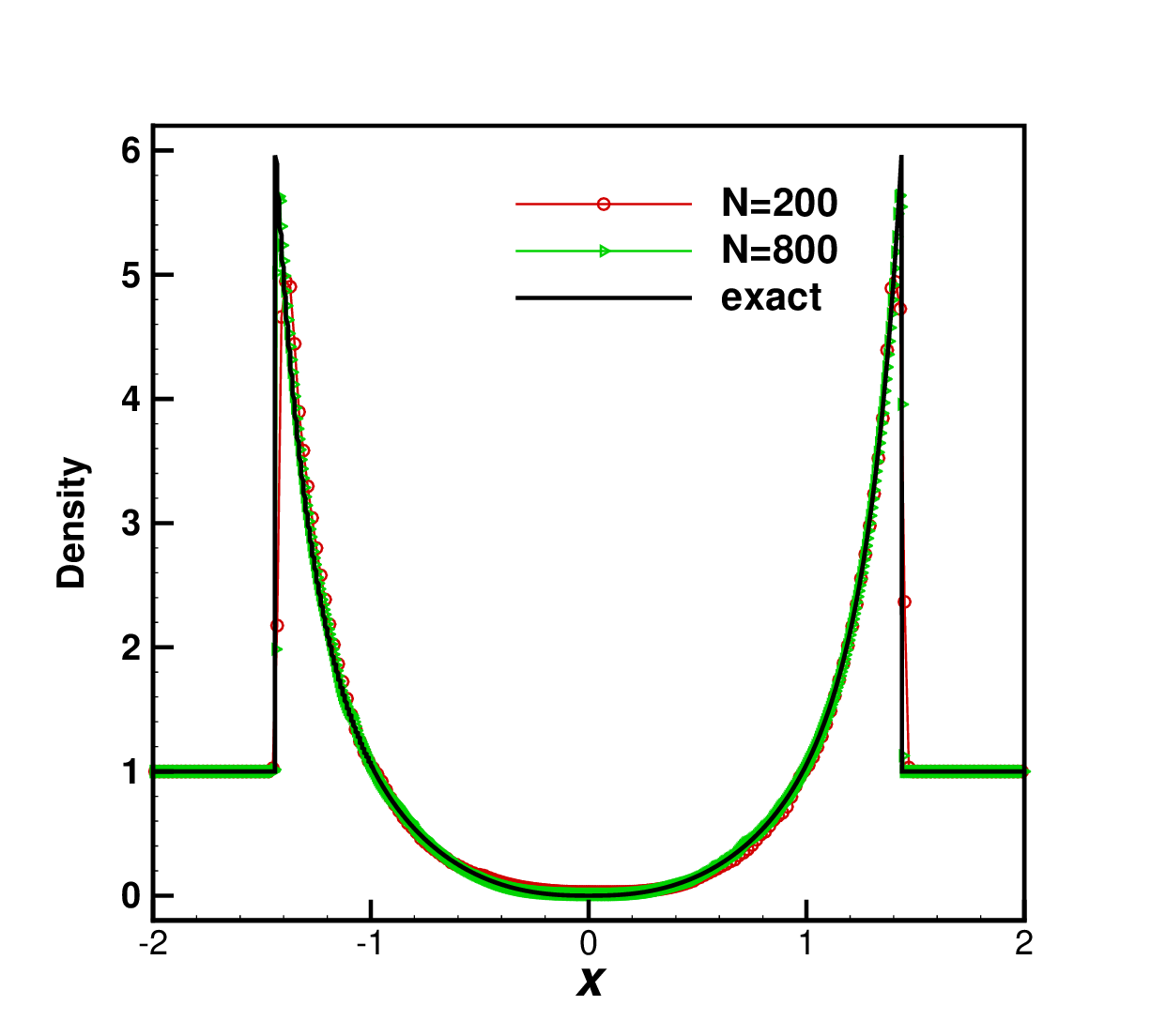}
\includegraphics[width=2.0in]{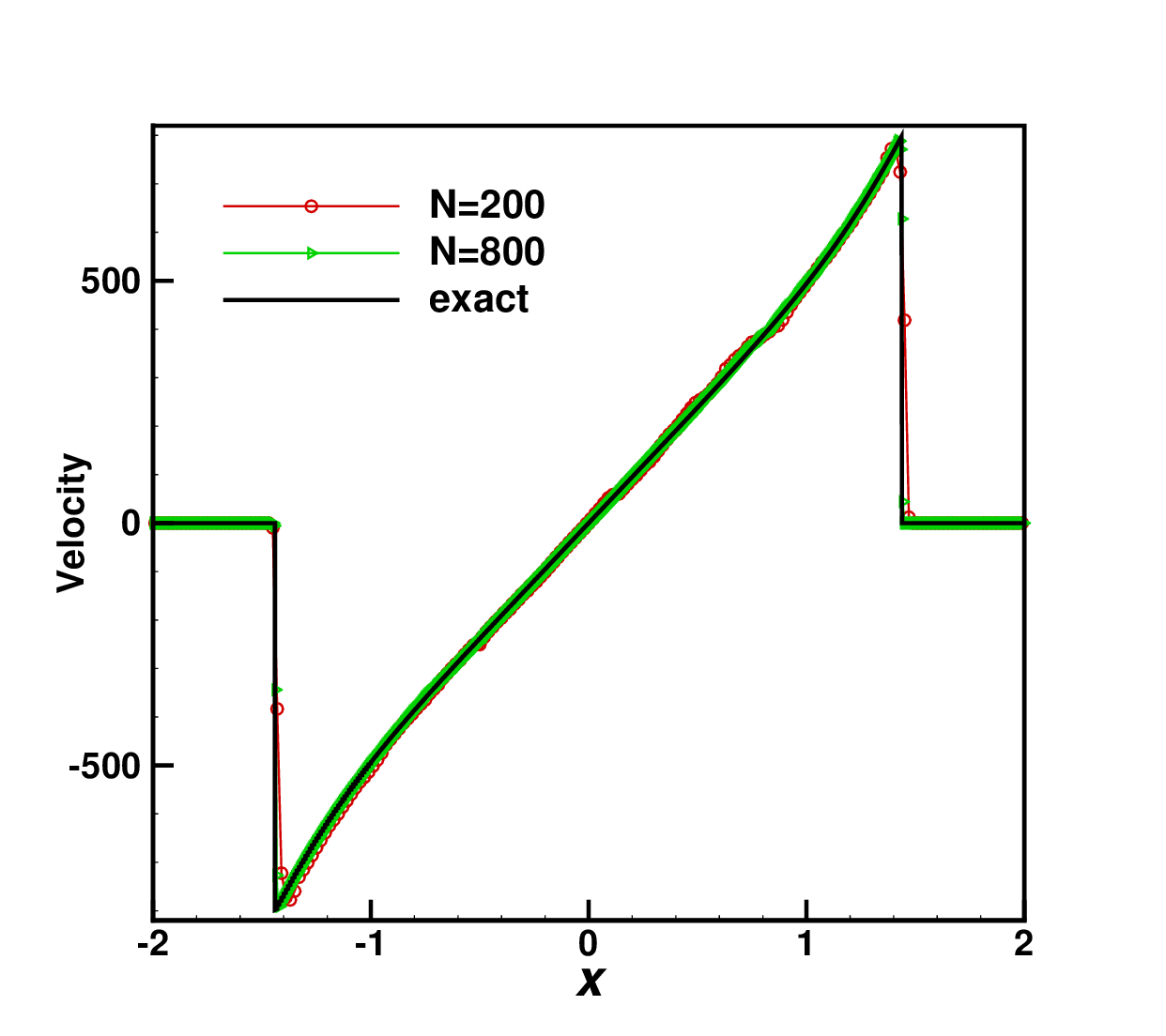}
\includegraphics[width=2.0in]{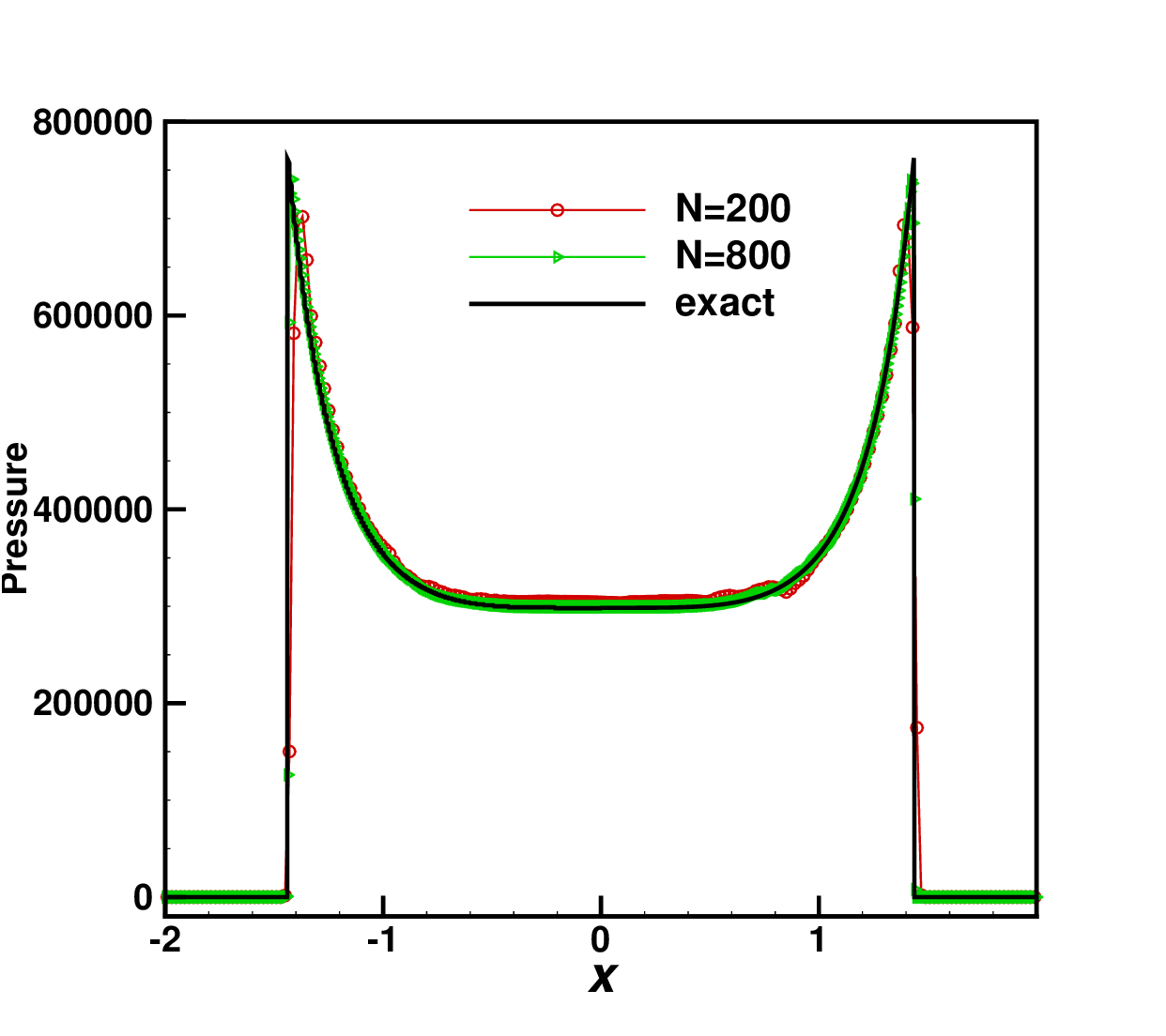}
\includegraphics[width=2.0in]{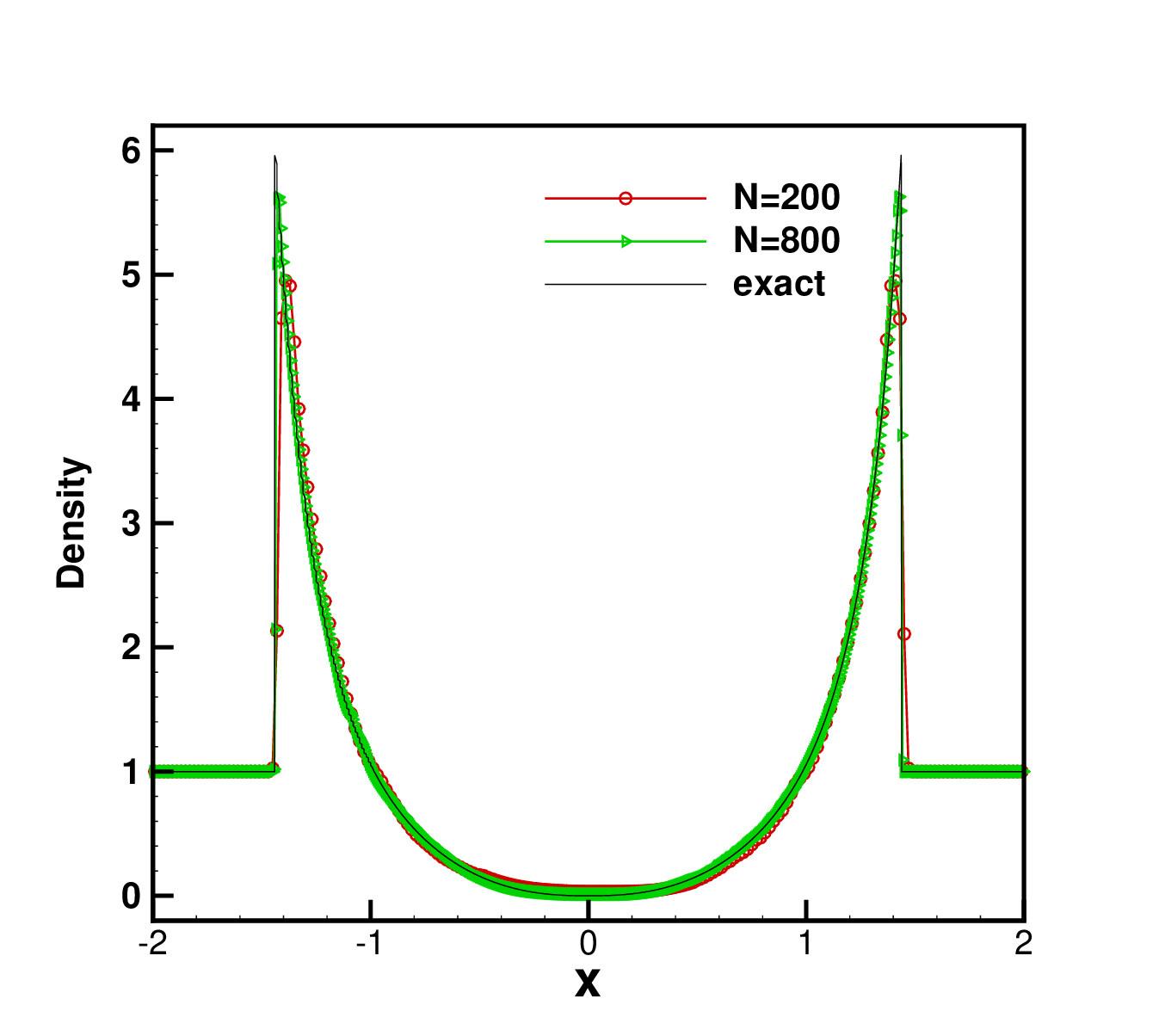}
\includegraphics[width=2.0in]{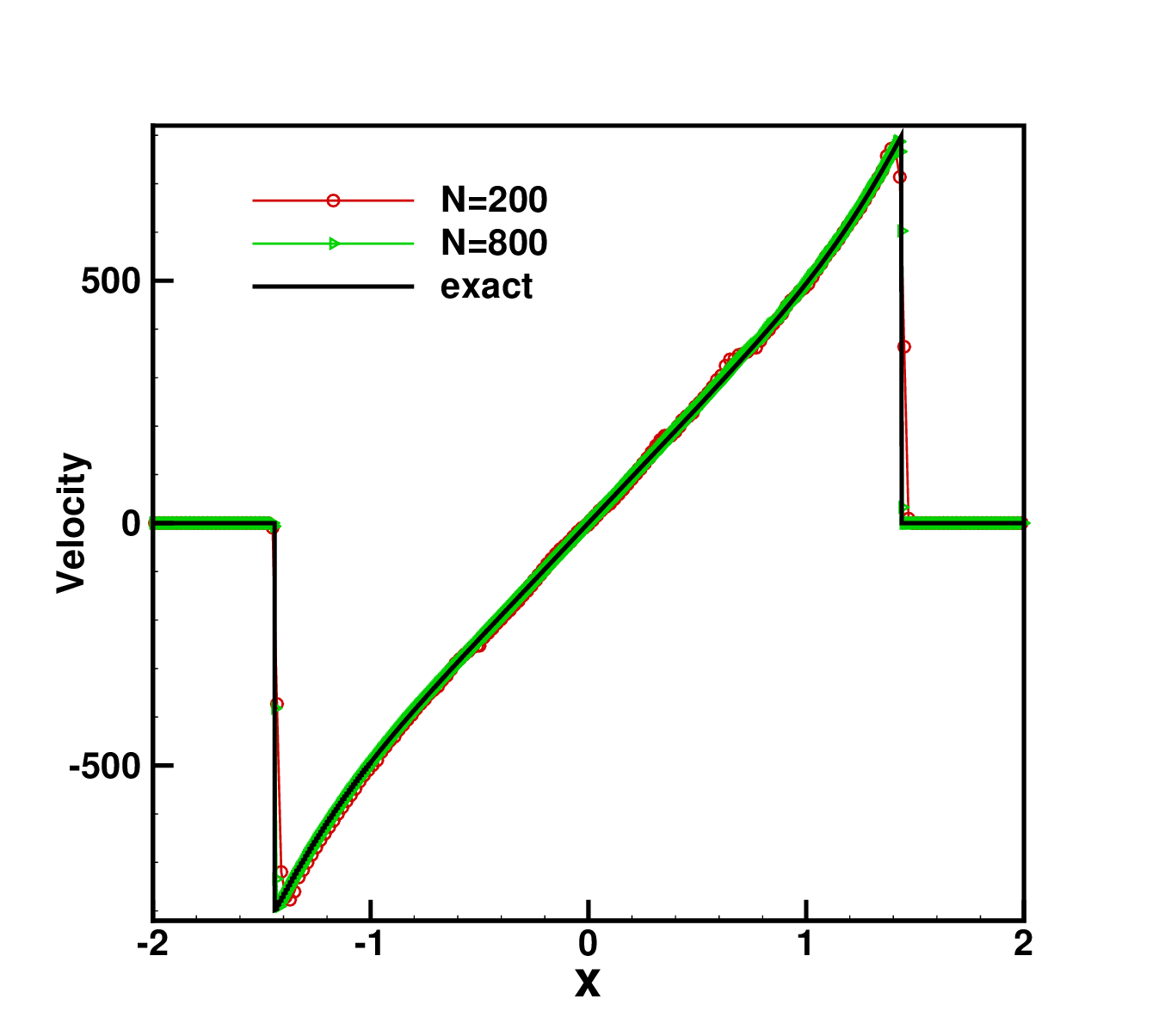}
\includegraphics[width=2.0in]{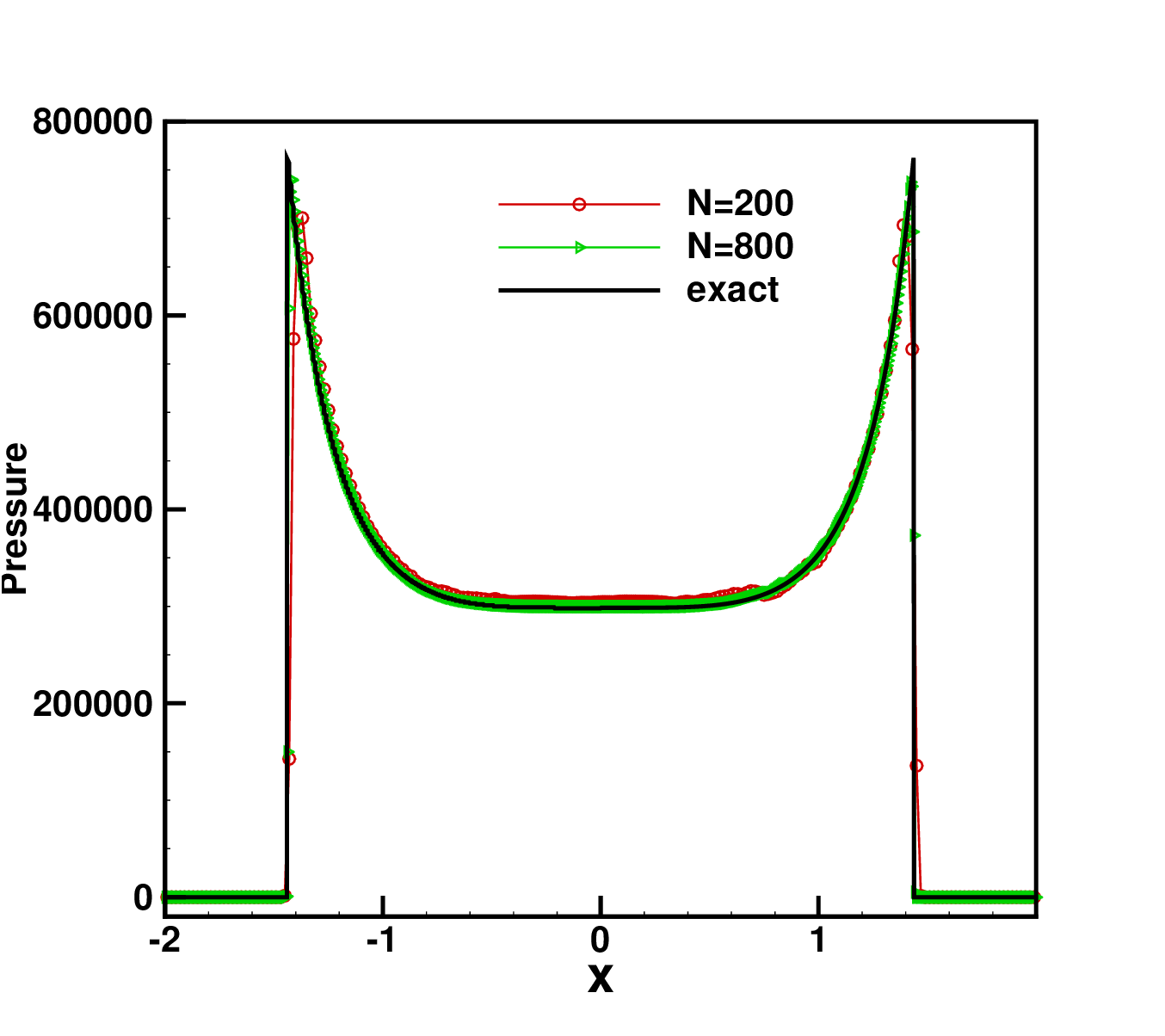}
\caption{The third order Cut-DG approximation of Sedov blast wave on the cut mesh.  The solid line is the exact solution and the symbols represent the Cut-DG solutions. Top: reconstruction on all macro-elements. Bottom: reconstruction only on macro-elements where the numerical solution has negative density or pressure.}\label{exe:sedovblast:1d:macroonlybad}
\end{figure} 

\subsubsection{The two blast wave problem}\label{sec:twoblastwave}
Finally, we consider the two blast wave problem for the Euler Equations with solid wall boundary conditions, see \cite{woodward1984numerical}. The solution contains strong shocks and rarefactions, which are reflected multiple times at the solid boundaries. There are a variety of interactions between shock waves and rarefaction waves, as well as between these waves and contact discontinuities. We consider the problem  on  the physical domain $[x_l,x_r]=[0,1]$, which is immersed in a background mesh. In this example there are only cut elements at the boundries.  We have constructed the mesh to be the same as in \cite{TAN20108144,YangInverseLB}, where the inverse Lax-Wendroff method is used.  
We take the computational domain as  $[x_L,x_R]$ with $x_L=x_l-(1-\alpha)h, x_R=x_r+(1-\alpha h)$ and a  uniform grid $x_L=x_{-\frac{1}{2}}<x_{\frac{1}{2}}<\cdots<x_{N-\frac{1}{2}}<x_{N+1/2}=x_R$.  As in \cite{YangInverseLB}, two unfitted small cells with size $\alpha h$ appear at the boundaries.  In our computations $\alpha=0.01$.    
The initial data at $t=0$ is  taken as 
\begin{align}
(\rho,u,p)= \begin{cases}
(1,0,10^3) & 0<x<0.1, \\ 
(1,0,10^{-2})& 0.1<x<0.9, \\
(1,0,10^2) & 0.9<x<1.
 \end{cases}
\end{align} 
At the boundaries, we use wall boundary conditions.   
We solve this problem by our proposed positivity-preserving scheme up to $t=0.038$. The numerical solution of density is shown in Figure \ref{fig:euler:twoblastwave}.  We use the solution from the third order standard DG method on uniform mesh  with $h = 1/3200$ as the reference solution.   We can see that our proposed scheme can capture the structure of the solution well and preserve the positivity of density and pressure. 
\begin{figure}[!htp]
\centering
\includegraphics[width=2.0in]{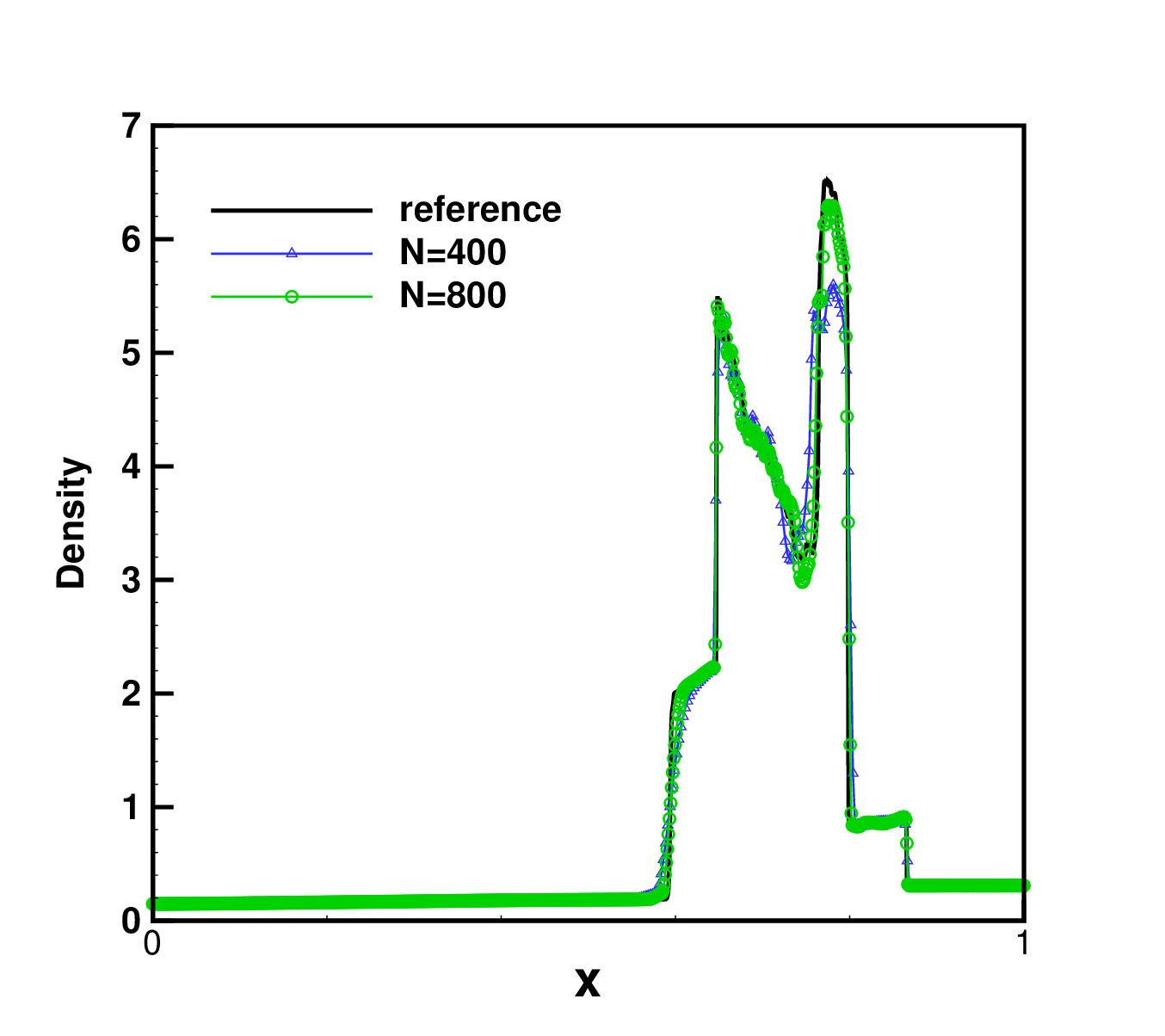}
\includegraphics[width=2.0in]{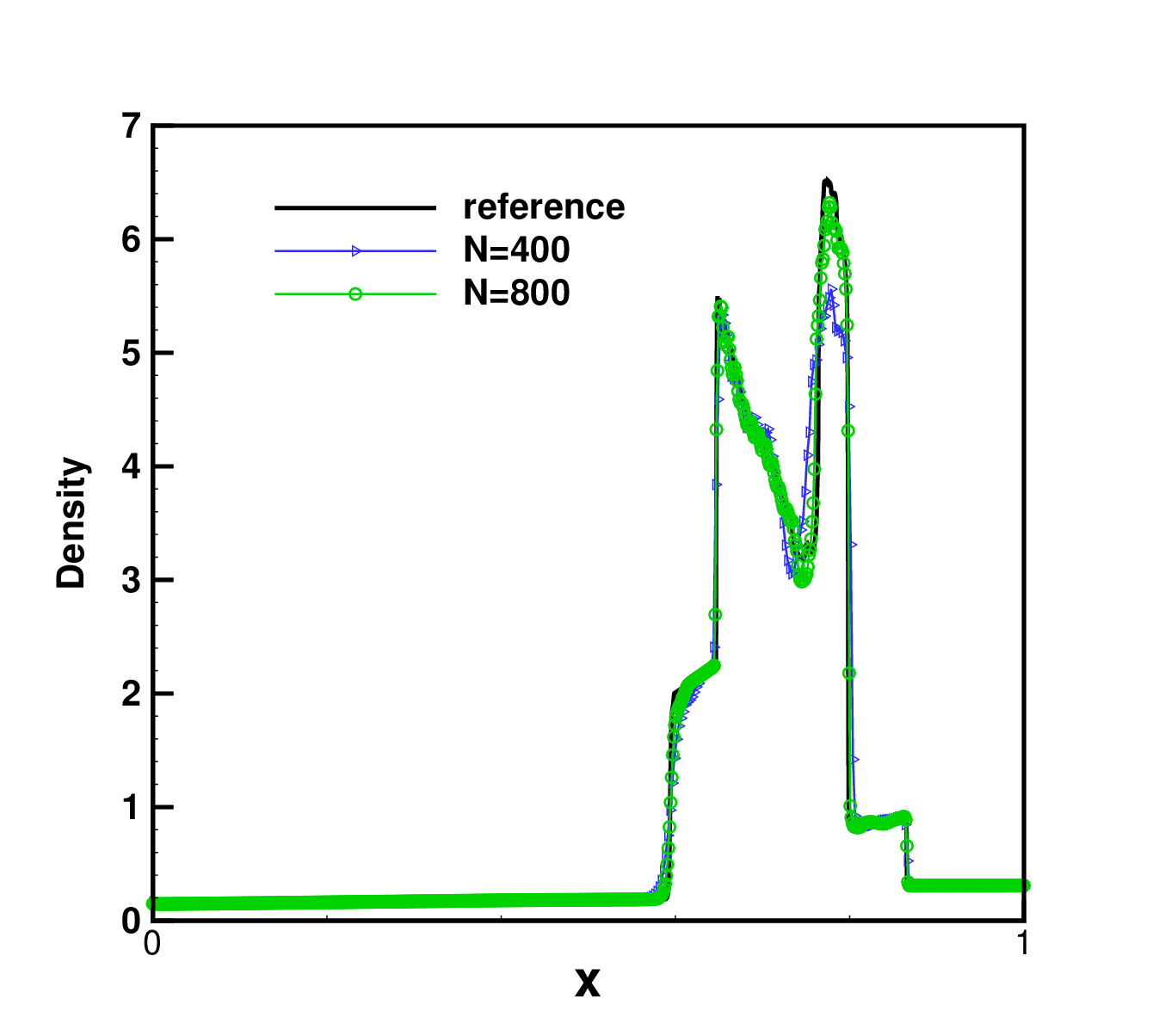}
\includegraphics[width=2.0in]{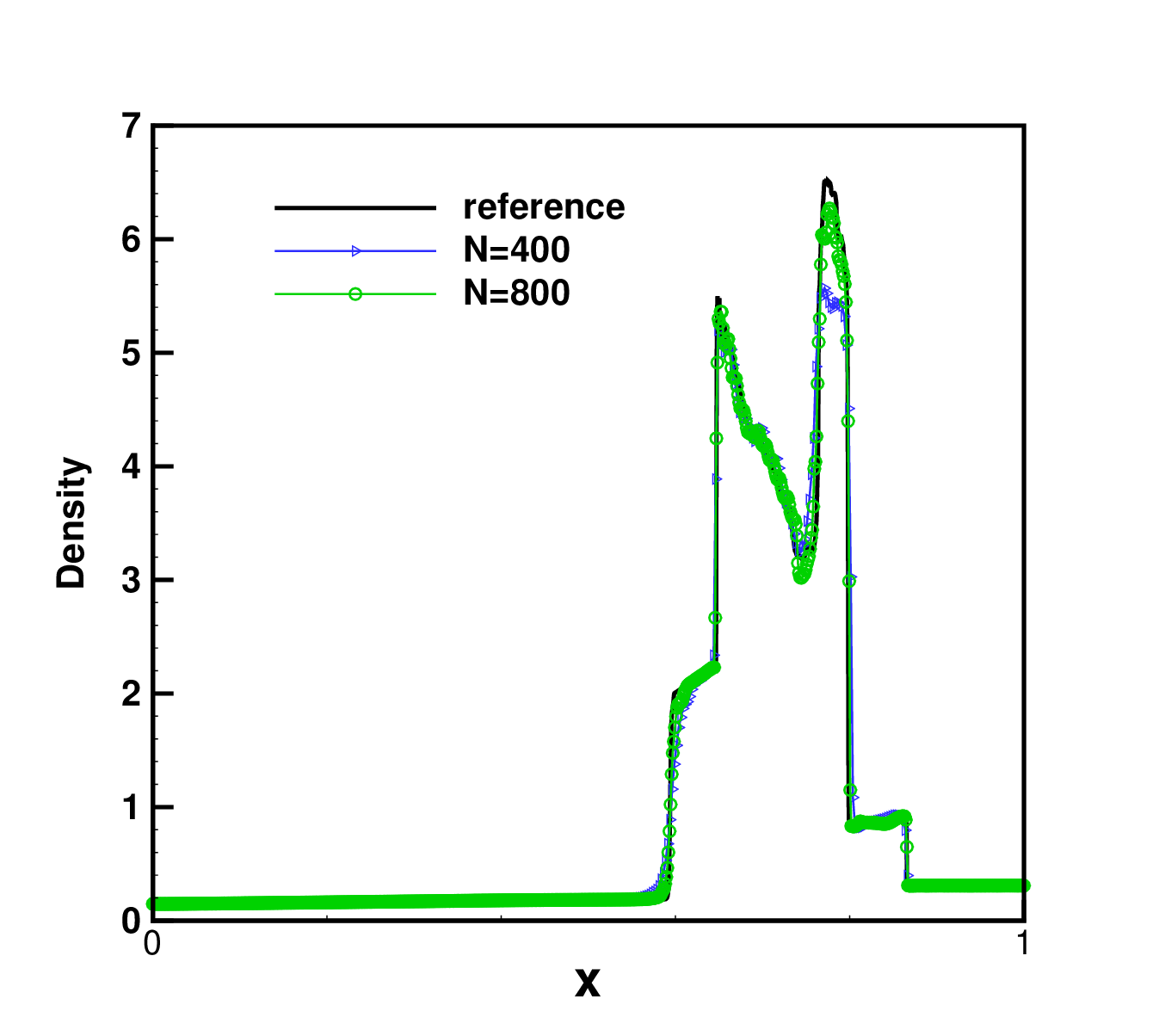}
\caption{Example 5.3: numerical approximations of density at $t=0.038$ for the two blast wave problem. Circle: numerical solution from our proposed scheme. Solid: reference solution from the standard DG method.  $P^1,P^2,P^3$ approximation from left to right. }\label{fig:euler:twoblastwave}
\end{figure}

\section{Conclusion}
In this paper, we present a family of bound preserving Cut-DG methods for hyperbolic conservation laws. The methods are based on explicit time-stepping with time-step restrictions on the same order as for the corresponding standard DG methods on un-cut meshes. The parameter $0<\delta \leq 1$ in \eqref{eq:largeel} regulates when cut-elements are stabilized, and influences the time-step restriction. By choosing $\delta=1$ we get exactly the same time-step restriction as in the standard un-cut case, but all cut elements are stabilized. 

A new reconstruction of the solution on macro-elements is introduced as a post-processing step in each inner stage of each time step. It ensures that our proposed piecewise constant scheme satisfies the maximum principle for scalar conservation laws and positivity of density and pressure for the system of Euler equations, respectively. The higher order versions of our proposed method include bound preserving limiting applied to the reconstruction on the macro-elements. To avoid oscillations when discontinuities are present, additional limiting is also included. Results from numerical computations demonstrate the good properties of our proposed methodology. The test cases include several challenging problems for the Euler equations, such as the Sedov blast wave and a double rarefaction problem. These examples include regions of low density and strong shocks. 

Extending the methodology to higher space dimensions is straightforward when no limiting is needed, and we have good preliminary results for smooth solutions. We also have good results when including bound preserving limiting for elements of polynomial orders zero and one. For polynomial orders higher than one, a good strategy for finding minimal/maximal values on irregular domains (which is needed in the bound preserving limiter) needs to be developed. We hope to present more complete results for problems in higher space dimensions in the future.


\begin{appendix}
\section{Fully discrete algorithm for the Euler equations}
In this appendix, the fully discrete algorithm for the Euler equations is shown in Algorithm 2.
\begin{algorithm}[!hb]
\caption{Fully discrete bound preserving Cut-DG method for the Euler equations}\label{algorithm}
\KwData{Given initial condition $\bu_0(x)$ at $t=0$}
\KwResult{Bound preserving approximation at final time $T$}
Initialize $\tilde \bu_h^0$ by \eqref{eq:stabilize:L2} and $n=0$\;
$ {\bu}_h^{0}$ = Reconstruction and limiting$(\tilde {\bu}_h^{0})$\;
\While{$t\leq T$}{
Compute suitable time-step $\Delta t$ based on Theorem 4.1/4.2\;
\tcc {Runge-Kutta time discretization}
$\tilde {\bu}_h^{(1)}$ = Euler-Step (${\bu}_h^{n}$, $t_n$, $\Delta t$)\tcp*{first stage}
${\bu}_h^{(1)}$ = Reconstruction and limiting $(\tilde {\bu}_h^{(1)})$\;
${\bu}_h^{(2')}$ = Euler-Step(${\bu}_h^{(1)}$, $t_n+\Delta t$, $\Delta t$)\tcp* {second stage}
Set: $\tilde {\bu}_h^{(2)}=\frac{3}{4}{\bu}_h^{n}+\frac{1}{4}{\bu}_h^{(2')}$\;
${\bu}_h^{(2)}$ = Reconstruction and limiting $(\tilde {\bu}_h^{(2)})$\;
 $\tilde {\bu}_h^{(3')}$ = Euler-Step(${\bu}_h^{(2)}$, $t_n+\Delta t/2$, $\Delta t$)\tcp* {third stage}
Set: $\tilde {\bu}_h^{(3)}=\frac{1}{3}{\bu}_h^{n}+\frac{2}{3}{\bu}_h^{(3')}$\;
${\bu}_h^{(3)}$ = Reconstruction and limiting $(\tilde  {\bu}_h^{(3)})$\;
Set: ${\bu}_h^{n+1}= {\bu}_h^{(3)}$\;
$n\leftarrow n+1$\;
$t\leftarrow t+\Delta t$\;
}
\SetKwFunction{proc}{Euler-step}
  \setcounter{AlgoLine}{0}
  \SetKwProg{eulerstep}{Function}{}{}
  \eulerstep{\proc{${\bu}_h$, $t$, $\Delta t$}}{
Using \eqref{scheme:cutDG2fully} with ${\bu}_h, t, \Delta t$ component-wise to get $\tilde{\bu}_h=(\tilde {\bu}_{h,1}, \tilde{\bu}_{h,2}, \cdots, \tilde{\bu}_{h,N_\Omega})\in V_h^p$ \;
 \KwRet $\tilde {\bu}_h=(\tilde {\bu}_{h,1}, \tilde{\bu}_{h,2}, \cdots, \tilde{\bu}_{h,N_\Omega})$\;
  }
  \SetKwFunction{procc}{Reconstruction and limiting}
  \setcounter{AlgoLine}{0}
  \SetKwProg{reconlimit}{Function}{}{}
  \reconlimit{\procc{$\tilde {\bu}_h$}}{
\For{$i=1$ \KwTo $N_\Omega$}{
Using \eqref{eq:scalar:reconstruction} with $\tilde {\bu}_h$ component-wise to reconstruct $\bu_{h,i}^{\mathcal{M}}, i=1,\cdots,N_\Omega$\;
\lIf{$\bu_{h,i}^{\mathcal{M}}|_{\Omega_i}\notin G$}{
modify $\bu_{h,i}^{\mathcal{M}}$ by  \eqref{euler:limit:rho}-\eqref{euler:limiter:p}}
$u_{h,i}\leftarrow u_{h,i}^{\mathcal{M}}$\;
}
 \KwRet ${\bu}_h=(u_{h,1}, {\bu}_{h,2}, \cdots, {\bu}_{h,N_\Omega})\in V_h^p$\;
  }
\end{algorithm}
\end{appendix}

\renewcommand\refname{Reference}
\bibliographystyle{abbrv}
\bibliography{CutDG}

\end{document}